\newtheorem{theorem}{Theorem}
\newtheorem{corollary}[theorem]{Corollary}
\newtheorem{definition}[theorem]{Definition}
\newtheorem{lemma}[theorem]{Lemma}
\newtheorem{nonGlobalClaim}{Claim}[theorem]  
\newtheorem{observation}[theorem]{Observation}
\newtheorem{fact}[theorem]{Fact}
\newtheorem{remark}[theorem]{Remark}
\newcounter{mynote}
\z@skip \halign{\relax\hfil\txtline@@{##}\hfil\cr\leavevmode#1\crcr}}}
\begin{document}
\title[Forcing axioms, approachability, and stationary set reflection]{Forcing axioms, approachability, and stationary set reflection}

\author{Sean D. Cox}
\email{scox9@vcu.edu}
\address{
Department of Mathematics and Applied Mathematics \\
Virginia Commonwealth University \\
1015 Floyd Avenue \\
Richmond, Virginia 23284, USA 
}

\subjclass[2010]{03E55,  03E35
}

\thanks{The author gratefully acknowledges support from Simons Foundation grant 318467.}

\begin{abstract}
We prove a variety of theorems about stationary set reflection and concepts related to internal approachability.  We prove that an implication of Fuchino-Usuba relating stationary reflection to a version of Strong Chang's Conjecture cannot be reversed; strengthen and simplify some results of Krueger about forcing axioms and approachability; and prove that some other related results of Krueger are sharp.  We also adapt some ideas of Woodin to simplify and unify many arguments in the literature involving preservation of forcing axioms.
\end{abstract}

\maketitle

\section{Introduction}

Foreman-Todorcevic~\cite{MR2115072} introduced several natural variants of the class IA of internally approachable sets of size $\omega_1$.  These are the classes of internally club sets (IC), the internally stationary sets (IS), and the internally unbounded sets (IU).  The inclusions
\begin{equation}\label{eq_Inclusions}
\text{IA} \subseteq \text{IC} \subseteq \text{IS} \subseteq \text{IU}
\end{equation}
follow from ZFC, and if the Continuum Hypothesis holds, then $\text{IA} =^* \text{IC} =^* \text{IS} =^* \text{IU}$.\footnote{Meaning that for any regular $\theta \ge \omega_2$, for all but nonstationarily many $W \in [H_\theta]^{\omega_1}$, $W$ is in one of those four classes if and only if it is in all of them.}  The chain of inclusions in \eqref{eq_Inclusions} is closely related to Shelah's \emph{Approachability Ideal} $I[\omega_2]$, as follows:
\begin{lemma}[folklore; see remarks after Observation \ref{obs_InternalPartProjects} for a proof]\label{lem_ApproachAndInclusions}
Assume $2^{\omega_1} = \omega_2$.  The assertion that the approachability property fails at $\omega_2$---i.e.\ that $\omega_2 \notin I[\omega_2]$---is equivalent to the assertion that $\text{IU}  \setminus \text{IA}$ is stationary in $\wp_{\omega_2}(H_{\omega_2})$.    In other words, failure of approachability property at $\omega_2$ is equivalent to asserting that \textbf{at least one} of the three inclusions in \eqref{eq_Inclusions} is strict in $P(\wp_{\omega_2}(H_{\omega_2}))/\text{NS}$.
\end{lemma}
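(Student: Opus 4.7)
The plan is to pass from $W \in \wp_{\omega_2}(H_{\omega_2})$ to the ordinal $\delta_W := \sup(W \cap \omega_2)$, exploiting the observation that the internal part of $W$ projects via $X \mapsto X \cap \omega_2$ to a subset of $\wp_{\omega_1}(\delta_W)$ that controls approachability of $\delta_W$. Using $2^{\omega_1}=\omega_2$, fix a bijection $\vec{a}\colon\omega_2\to H_{\omega_2}$. Then for $W$ in the natural stationary class of elementary submodels of $H_{\omega_2}$ with $|W|=\omega_1$, $\vec{a}\in W$, $\omega_1\subseteq W$, and $W\cap\omega_2$ equal to an ordinal $\delta_W$ of cofinality $\omega_1$, the central claim is that $W$ is IA if and only if $\delta_W$ is approachable with respect to $\vec{a}$ (regarded as an approachability matrix).

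The forward direction of this key equivalence is routine: from an IA chain $\langle W_i:i<\omega_1\rangle$ for $W$, the ordinals $\beta_i := \sup(W_i\cap\omega_2)$ form a continuous, increasing, cofinal sequence in $\delta_W$, and since $\langle W_j:j<i\rangle\in W$ the initial segment $\{\beta_j:j<i\}$ lies in $W\cap\wp(\delta_W)\subseteq\{a_\eta:\eta<\delta_W\}$, which is an approachability witness at $\delta_W$. The backward direction constructs an IA chain for $W$ from an approachability witness $\langle\alpha_i:i<\omega_1\rangle$ for $\delta_W$ with initial segments $x_i:=\{\alpha_j:j<i\}\in\{a_\eta:\eta<\delta_W\}\subseteq W$; the cleanest approach interleaves these with an internal IU witness $\langle M_i\rangle\subseteq W\cap[W]^\omega$ by setting $W_i := M_i\cup\mathrm{Sk}^{H_{\omega_2}}(x_i\cup(i+1)\cup\{\vec{a}\})$, and arguing that all data needed to define $\langle W_j:j<i\rangle$ (namely $x_i$, $i$, and the relevant IU data aligned with $x_i$) lies in $W$, while the union exhausts $W$ because $\vec{a}[\delta_W]=W$.

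Given the key equivalence, the lemma follows. If $\omega_2\notin I[\omega_2]$, pick a stationary $S\subseteq\omega_2\cap\mathrm{cof}(\omega_1)$ of $\alpha$ non-approachable with respect to $\vec{a}$; for each $\alpha\in S$, build $W$ with $W\cap\omega_2=\alpha$ as the union of a continuous $\omega_1$-chain of countable elementary submodels, so that $W$ is IU (the chain sits cofinally inside $W\cap[W]^\omega$) but fails to be IA by the contrapositive of the key equivalence, giving stationarity of $\mathrm{IU}\setminus\mathrm{IA}$. Conversely, if $\omega_2\in I[\omega_2]$, absorb the approachability matrix into $\vec{a}$ by a routine reindexing; then club-many $W$ in the stationary class above have $\delta_W$ approachable and hence are IA, so $\mathrm{IU}\setminus\mathrm{IA}$ is nonstationary. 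The restatement as ``at least one inclusion in \eqref{eq_Inclusions} is strict modulo NS'' is immediate from the partition $\mathrm{IU}\setminus\mathrm{IA}=(\mathrm{IU}\setminus\mathrm{IS})\cup(\mathrm{IS}\setminus\mathrm{IC})\cup(\mathrm{IC}\setminus\mathrm{IA})$. The main obstacle I expect is the backward direction of the key equivalence: marrying the IU data for $W$ with the approachability witness for $\delta_W$ so that the resulting chain $\langle W_i\rangle$ satisfies both $\bigcup_iW_i=W$ and $\langle W_j:j<i\rangle\in W$ for all $i<\omega_1$, which requires a careful choice of Skolem functions and a coherent interleaving of the two sequences.
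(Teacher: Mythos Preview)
Your plan is essentially the paper's: fix a bijection $\Phi:\omega_2\to H_{\omega_2}$, restrict to the club of $W=\Phi[\gamma]$ with $\gamma\in S^2_1$ (which the paper notes equals $\text{IU}\cap\wp_{\omega_2}(H_{\omega_2})$ modulo NS), and argue that such $W$ is IA iff $\gamma$ is approachable. The paper only sketches the direction $\omega_2\in I[\omega_2]\Rightarrow \text{IA}=^*\text{IU}$ and leaves the converse to the reader, so your outline is in fact more complete than the paper's.

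One caution about your backward step: interleaving with an \emph{arbitrary} IU witness $\langle M_i\rangle$ is dangerous, because for $W$ merely IU there is no reason the initial segment $\langle M_j:j<i\rangle$ lies in $W$---that is exactly what separates IU from IA---so your claim that ``all data needed to define $\langle W_j:j<i\rangle$ lies in $W$'' would fail unless each $M_j$ is \emph{definable} from $x_j$ and parameters already in $W$. The cleaner route (and what the paper's ``can easily be shown'' presumably intends) drops the separate IU data entirely: take $W_\gamma=\text{Sk}^{\mathfrak{A}}(\gamma)$ for a structure $\mathfrak{A}$ on some large $H_\theta$ that includes $\Phi$ together with surjections $\omega_1\to\alpha$ for each $\alpha<\omega_2$, and set $N_i:=\text{Sk}^{\mathfrak{A}}(\{\beta_j:j<i\}\cup i)\cap H_{\omega_2}$. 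The surjections guarantee $\bigcup_i N_i=\Phi[\gamma]$, and each initial segment $\langle N_j:j<i\rangle$ is definable in $W_\gamma$ from $\{\beta_j:j<i\}\in W_\gamma$, hence lies in $W_\gamma\cap H_{\omega_2}$. (Also note $\vec{a}\notin H_{\omega_2}$, so your $\text{Sk}^{H_{\omega_2}}(\dots\cup\{\vec{a}\})$ should be read as a hull in a larger $H_\theta$ with $\vec{a}$ as a predicate.)
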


In light of Lemma \ref{lem_ApproachAndInclusions}, a separation of adjacent classes in the chain of inclusions from \eqref{eq_Inclusions} can be viewed as a very strong failure of approachability.
 
Foreman and Todorcevic independently proved that the Proper Forcing Axiom (PFA) implies failure of the approachability property at $\omega_2$;\footnote{See the discussion preceding Theorem 5.3 in  K\"onig-Yoshinobu~\cite{MR2050172}.} in particular, PFA implies that at least one of the inclusions in \eqref{eq_Inclusions} must be strict.  Krueger~\cite{MR2332607} improved this, by showing that PFA in fact separates IA from IC in a global fashion.  Given subclasses $\Gamma$ and $\Gamma'$ of $\{ W \ : \ |W|=\omega_1 \subset W \}$ such that $\Gamma \subseteq \Gamma'$, let us say that the inclusion $\Gamma \subseteq \Gamma'$ is \textbf{globally strict} iff $\Big(\Gamma' \cap \wp_{\omega_2}(H_\theta)\Big) \setminus \Big( \Gamma \cap \wp_{\omega_2}(H_\theta)\Big)$ is stationary for every regular $\theta \ge \omega_2$.  Answering a question of Foreman-Todorcevic, Krueger proved (\cite{MR2332607}, \cite{MR2674000}, and \cite{MR2502487}) that \emph{each} of the three inclusions in \eqref{eq_Inclusions} can be globally strict, under various strong forcing axioms.  As mentioned above, PFA globally separates IA from IC, but stronger forcing axioms were used for the following separations:

\begin{theorem}[Krueger~\cite{MR2502487}, Theorem 5.2]\label{thm_Krueger_PFAplus1}
$\text{PFA}^+$ implies that the inclusion $\text{IC} \subseteq \text{IU}$---i.e.\ between the second and fourth class in the chain \eqref{eq_Inclusions}---is globally strict.  In particular, $\text{PFA}^+$ implies there is a disjoint stationary sequence on $\omega_2$.
\end{theorem}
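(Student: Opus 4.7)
The plan is to obtain a disjoint stationary sequence (DSS) on $\omega_2$ from $\text{PFA}^+$ (yielding the ``in particular'' clause) and then derive the global strictness of $\text{IC}\subseteq\text{IU}$ from the existence of a DSS. Recall that a DSS on $\omega_2$ is a sequence $\langle S_\alpha : \alpha \in T\rangle$ where $T \subseteq \omega_2 \cap \mathrm{cof}(\omega_1)$ is stationary, each $S_\alpha$ is a stationary subset of $[\alpha]^\omega$, and the $S_\alpha$ are pairwise disjoint.

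For the deduction of global strictness from a DSS, fix any regular $\theta \ge \omega_2$. The key observation is that if $W \cap \omega_2 = \alpha \in T$ and $W$ is IC, then $W \cap [W]^\omega$ contains a club $D \subseteq [W]^\omega$, and the projection $\{x \cap \alpha : x \in D\}$ forms a club in $[\alpha]^\omega$ that is a subset of $W$; this must meet the stationary $S_\alpha$, so $W \cap S_\alpha$ is stationary in $[\alpha]^\omega$. Therefore it suffices to produce, for stationarily many $\alpha \in T$, a $W \in \wp_{\omega_2}(H_\theta)$ with $W \cap \omega_2 = \alpha$ that is IU while $W \cap S_\alpha$ fails to be stationary in $[\alpha]^\omega$. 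Such $W$ cannot be built as a continuous elementary chain (which automatically yields IC, its $\omega_1$-cofinal trace on $[\alpha]^\omega$ being a club); instead, one must use a non-continuous ``mixing'' of countable elementary substructures along $[\alpha]^\omega \setminus S_\alpha$. The disjointness of the DSS furnishes enough room to carry out this construction, though its verification is the delicate part of the step.

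For the production of a DSS under $\text{PFA}^+$, consider the proper forcing $\mathbb{P}$ whose conditions are countable partial approximations, $p = \langle s_\alpha^p : \alpha \in t_p\rangle$, with $t_p \subseteq \omega_2 \cap \mathrm{cof}(\omega_1)$ countable, each $s_\alpha^p \subseteq [\alpha]^\omega$ countable, and the $s_\alpha^p$ pairwise disjoint; ordered by coordinatewise extension. Properness follows by the standard fusion argument over a countable $M \prec H_\chi$, since countable unions of compatible conditions in $M$ remain conditions. The main obstacle is that $\text{PFA}^+$ directly preserves only $\omega_1$-many stationary names, whereas a DSS contains $\omega_2$-many stationary sets $S_\alpha$. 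This is bridged by a reflection-style argument: inside a countable $M \prec H_\chi$, only countably many indices $\alpha \in t_p \cap M$ are visible, so the stationarity of the full generic sequence can be repackaged, via fixed bijections $\alpha \leftrightarrow \omega_1$, into a single $\omega_1$-indexed family of names for stationary subsets of $\omega_1$. Applying $\text{PFA}^+$ to this family---together with $\omega_1$-many dense sets ensuring density of each $\alpha$ in the generic $T$ and cofinality of each $s_\alpha$ in $[\alpha]^\omega$---yields the desired DSS in $V$.
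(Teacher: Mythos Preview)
Your proposal has genuine gaps, and the overall architecture is inverted relative to both Krueger's argument and the paper's (stronger) proof.

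\textbf{First gap: direction of the reduction.} A disjoint stationary sequence on $\omega_2$ is a \emph{local} object; by Krueger's equivalence (Theorem~\ref{thm_Krueger_DisjointSequences}) it is equivalent (under $2^{\omega_1}=\omega_2$) to $\text{IU}\ne\text{IC}$ in $\wp_{\omega_2}(H_{\omega_2})$, not to global strictness. Your sketch for lifting a DSS on $\omega_2$ to $\text{IU}\setminus\text{IC}$ being stationary in $\wp_{\omega_2}(H_\theta)$ for arbitrary $\theta$ is not a proof: you acknowledge that constructing the required $W$ is ``the delicate part'' and then stop. There is no reason to expect this step to go through from a DSS alone. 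Krueger (and this paper) do the opposite: prove global strictness directly for each $\theta$, and read off the DSS as the special case $\theta=\omega_2$.

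\textbf{Second gap: the DSS forcing.} Your poset of countable approximations is $\sigma$-closed, so it adds no new countable sets; each generic $S_\alpha$ is then a subset of the ground-model $[\alpha]^\omega$. But nothing in your description forces $S_\alpha$ to be stationary in $[\alpha]^\omega$: density arguments only make $S_\alpha$ cofinal, and ``stationary in $[\alpha]^\omega$'' cannot be repackaged as ``stationary in $\omega_1$'' via a bijection $\alpha\leftrightarrow\omega_1$ in the way you suggest, since these are different notions of stationarity. Moreover, in this paper $\text{PFA}^+$ means $\text{PFA}^{+1}$, giving you a \emph{single} stationary name, not $\omega_1$ many; your reduction from $\omega_2$-many to $\omega_1$-many would still not suffice.

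\textbf{What the paper does.} The paper proves the stronger Theorem~\ref{thm_FromPFAplus1} (which implies the present statement since $\text{IS}\subseteq\text{IU}$): fix $\theta$ and a stationary/costationary $T\subseteq\omega_1$, and use the single proper poset $\mathbb{Q}^{\text{proper}}_{T,\theta}$ of Theorem~\ref{thm_TwoPosets}, which forces $H^V_\theta$ to be (guessing and) in $\text{IS}\setminus\text{IC}$. The \emph{one} stationary name needed for $\text{PFA}^{+1}$ is the external part of $H^V_\theta$. Woodin's Lemma~\ref{lem_WoodinKeyLemma} then yields stationarily many $W$ with a $(W,\mathbb{Q}^{\text{proper}}_{T,\theta})$-generic filter interpreting that name stationarily, and Corollary~\ref{cor_ControlInternal} transfers the conclusion to $W\cap H_\theta\in\text{IS}\setminus\text{IC}\subseteq\text{IU}\setminus\text{IC}$. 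This handles every $\theta$ directly; the DSS is then a corollary, not a hypothesis.
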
  

\begin{theorem}[Krueger; corollary of Theorem 0.2 of \cite{MR2674000} and Theorem 6.3 of \cite{MR2502487}]\label{thm_Krueger_MM}
Martin's Maximum (MM) implies that the inclusion $\text{IS} \subseteq \text{IU}$ is globally strict.  In particular, there is a disjoint club sequence on $\omega_2$.
\end{theorem}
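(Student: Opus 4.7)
The plan is to combine the two cited results of Krueger, factoring the argument through the intermediate notion of a \emph{disjoint club sequence} on $\omega_2$: a sequence $\langle C_\alpha : \alpha \in S \rangle$ with $S \subseteq \omega_2 \cap \text{cof}(\omega_1)$ stationary, each $C_\alpha$ a club in $\alpha$ of order type $\omega_1$, and $C_\alpha \cap C_\beta = \emptyset$ for distinct $\alpha, \beta \in S$. Theorem~0.2 of~\cite{MR2674000} produces such a sequence from MM---this handles the ``in particular'' clause---and Theorem~6.3 of~\cite{MR2502487} converts the existence of a disjoint club sequence into the conclusion that $\text{IU} \setminus \text{IS}$ is stationary in $\wp_{\omega_2}(H_\theta)$ for every regular $\theta \geq \omega_2$.

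For the first step, the strategy is to exploit MM's strong stationary reflection at $\omega_2$, particularly the reflection of stationary subsets of $[\omega_2]^\omega$ and variants of the projective stationary reflection principle, to thin and align failures of approachability into pairwise disjoint clubs. Since the weaker hypothesis $\text{PFA}^+$ already yields a disjoint stationary sequence by Theorem~\ref{thm_Krueger_PFAplus1}, the additional work is to strengthen from a disjoint stationary sequence to a genuine disjoint club sequence, which is where MM's extra reflection strength is consumed. This is the main obstacle of the proof.

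For the second step, given a disjoint club sequence $\langle C_\alpha : \alpha \in S \rangle$ and any regular $\theta \geq \omega_2$, the goal is to show that every club $\mathcal{D} \subseteq \wp_{\omega_2}(H_\theta)$ meets $\text{IU} \setminus \text{IS}$. Using standard reflection and Skolem-hull arguments---after possibly thinning $S$ so that for each $\alpha \in S$ the Skolem hull of $C_\alpha$ meets $\omega_2$ in exactly $C_\alpha$---one builds $W \in \mathcal{D}$ with $|W| = \omega_1 \subseteq W$, $\langle C_\beta \rangle \in W$, and $W \cap \omega_2 = C_\alpha$ for some $\alpha \in S$. The enumeration of $C_\alpha$ in order type $\omega_1$ immediately witnesses $W \in \text{IU}$. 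For $W \notin \text{IS}$, argue by contradiction: if a continuous chain $\langle W_\xi \rangle_{\xi < \omega_1}$ with $\bigcup_{\xi < \omega_1} W_\xi = W$ had stationarily many $W_\xi \in W$, then for each such $\xi$ the countable set $X_\xi := W_\xi \cap \omega_2 \in W$ is contained in $C_\alpha$; elementarity applied with parameter $\langle C_\beta \rangle$ yields $\beta_\xi \in W \cap S$ with $X_\xi \subseteq C_{\beta_\xi}$, and since $\beta_\xi \in W \cap \omega_2 = C_\alpha$ forces $\beta_\xi \neq \alpha$ (as $\alpha \notin W$), the disjointness gives $X_\xi \subseteq C_{\beta_\xi} \cap C_\alpha = \emptyset$, contradicting that $X_\xi$ is nonempty for all sufficiently large $\xi$.
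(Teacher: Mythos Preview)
The paper does not give its own proof of this theorem; it is quoted as a result of Krueger, obtained by combining the two cited references. So there is no in-paper argument to compare against, only the question of whether your sketch correctly reconstructs how the two cited results fit together.

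There is a genuine definitional error that undermines your second step. In Friedman--Krueger and in this paper (see the paragraph before Theorem~\ref{thm_Krueger_DisjointSequences}), a \emph{disjoint club sequence on $\omega_2$} is a sequence $\langle x_\gamma : \gamma \in S\rangle$ with $S \subseteq S^2_1$ stationary and each $x_\gamma$ a club subset of $[\gamma]^\omega$ (countable subsets of $\gamma$), \emph{not} a club subset of $\gamma$ of order type $\omega_1$ as you wrote. Your entire construction of $W$ with $W \cap \omega_2 = C_\alpha$, the claim that the enumeration of $C_\alpha$ witnesses $W \in \text{IU}$, and the elementarity argument using $X_\xi \subseteq C_{\beta_\xi}$ all rest on the ordinal-club picture and do not translate to the actual definition. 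You have also misread Theorem~6.3 of \cite{MR2502487}: as restated here in Theorem~\ref{thm_Krueger_DisjointSequences}, it gives the equivalence (under $2^{\omega_1}=\omega_2$) between a disjoint club sequence and $\text{IU} \ne \text{IS}$ \emph{only in $\wp_{\omega_2}(H_{\omega_2})$}, not globally. The ``in particular'' in the theorem statement signals the intended flow is the reverse of yours: Theorem~0.2 of \cite{MR2674000} already gives the global separation $\text{IU}\setminus\text{IS}$ stationary in $\wp_{\omega_2}(H_\theta)$ for all regular $\theta\ge\omega_2$ directly from MM, and then Theorem~6.3 of \cite{MR2502487} extracts the disjoint club sequence from the $\theta=\omega_2$ case.
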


\begin{theorem}[Krueger~\cite{MR2674000}, Theorem 0.3]\label{thm_Krueger_PFAplus2}
$\text{PFA}^{+2}$ implies that the inclusion $\text{IC} \subseteq \text{IS}$ is globally strict.
\end{theorem}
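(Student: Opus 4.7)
Fix a regular $\theta \geq \omega_2$. The plan is to apply $\text{PFA}^{+2}$ to a proper forcing $\mathbb{P}_\theta$ which generically adds a continuous $\subseteq$-increasing filtration $\langle \dot{M}_\alpha : \alpha < \omega_1 \rangle$ of countable elementary submodels of $H_\theta^V$, whose union $\dot{W} = \bigcup_{\alpha<\omega_1} \dot{M}_\alpha$ is the target witness. For $W \in \text{IS}\setminus\text{IC}$ we want $\{\alpha < \omega_1 : \dot{M}_\alpha \in \dot{W}\}$ to be stationary and costationary in $\omega_1$, since (in line with Lemma \ref{lem_ApproachAndInclusions}) $W$ is IC iff this set contains a club and IS iff it is stationary. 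Conditions in $\mathbb{P}_\theta$ will be partial approximations to such a filtration, together with side-condition data marking some models as \emph{inner} (promised to be enumerated as an element of a later model, so as to land in $\dot{W}$) and others as \emph{outer} (promised never to appear as an element of any later model, so as to stay outside $\dot{W}$).

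To obtain \emph{stationarily} many such $W$ in $[H_\theta]^{\omega_1}$, I would absorb a given function $F\colon H_\theta^{<\omega}\to H_\theta$ into the definition of $\mathbb{P}_\theta$ by requiring that each $\dot{M}_\alpha$ be closed under $F$, which is routine. The key step is to introduce two $\mathbb{P}_\theta$-names
\begin{align*}
\dot{T}_{\mathrm{in}} &= \{\alpha<\omega_1 : \dot{M}_\alpha \text{ is marked inner}\}, \\
\dot{T}_{\mathrm{out}} &= \{\alpha<\omega_1 : \dot{M}_\alpha \text{ is marked outer}\},
\end{align*}
and verify via density arguments that each is forced to be stationary in $\omega_1$: any condition can be extended so that its top model receives either label at a prescribed index forced into a given name for a club. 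The ``$+2$'' in $\text{PFA}^{+2}$ is exactly what is needed, as we must preserve \emph{both} stationary sets simultaneously. Applying $\text{PFA}^{+2}$ to $\mathbb{P}_\theta$, with $\omega_1$-many dense sets enumerating $\omega_1$ and guaranteeing that $\bigcup \dot{M}_\alpha$ covers a sufficiently large subset of $H_\theta$, together with the two names $\dot{T}_{\mathrm{in}}, \dot{T}_{\mathrm{out}}$, yields a filter $G$ for which $W_G$ is closed under $F$, is internally stationary, and is not internally club, as desired.

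The principal obstacle is the design of $\mathbb{P}_\theta$ and the proof of its properness. The outer markers must genuinely prevent the corresponding models from being swallowed into any later model of the filtration; this is arranged by a Neeman/Mitchell-style amalgamation scheme for finite side conditions, along the lines of Krueger's forcings in \cite{MR2674000, MR2502487}, with the additional bookkeeping that outer/inner labels are respected under extension and under $\in$-closures. Properness of $\mathbb{P}_\theta$ should then follow the standard pattern: given a countable $N \prec H_\chi$ (large $\chi$) containing $\mathbb{P}_\theta$ and any condition $p \in N$, place $N \cap H_\theta$ atop $p$ as a new inner model to obtain an $(N, \mathbb{P}_\theta)$-generic extension. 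The two density claims---that arbitrarily large inner, respectively outer, indices can be forced into any given name for a club in $\omega_1$---then reduce to the amalgamation properties of $\mathbb{P}_\theta$ built in at the outset. Once these points are in place, the theorem follows.
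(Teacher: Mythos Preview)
Your overall strategy---build a proper poset adding a filtration of $H^V_\theta$, together with two names for the internal and external index sets, and invoke $\text{PFA}^{+2}$---has the right shape. But the ``outer marker'' mechanism you describe cannot be implemented by a pure finite-side-condition forcing. In any Neeman-style poset whose conditions are finite $\in$-chains of countable models from $V$, the generic chain $\langle Q_i : i<\omega_1\rangle$ satisfies $Q_i \in Q_{i+1}$ for every $i$, so $Q_i \in W := \bigcup_j Q_j$ for all $i$, and the resulting $W$ is automatically internally club. Trying to forbid a marked-outer model $M$ from appearing as an element of later models collides directly with properness: the standard master condition for a countable $N \prec H_\chi$ with $p \in N$ places $N \cap H_\theta$ atop $p$, and every $M$ occurring in $p$ then satisfies $M \in N \cap H_\theta$, violating the outer constraint on $M$. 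There is no amalgamation trick that rescues this while keeping the models in $V$.

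The missing ingredient---present in both Krueger's argument and the paper's Theorem~\ref{thm_TwoPosets}---is that external models must be genuinely \emph{new} countable sets, not ground-model sets carrying a label. One first forces with $\text{Add}(\omega)$ to obtain a real $\sigma$; by the Gitik--Velickovic Theorem~\ref{thm_Velick}, the set $[\omega_2]^\omega \setminus V$ is projective stationary in $V[\sigma]$. One then shoots a continuous $\in$-chain through a stationary set of countable $z \subseteq H^V_\theta[\sigma]$, where the ``outer'' $z$ are those with $z \cap \omega_2 \notin V$. Projecting the resulting filtration of $H^V_\theta[\sigma]$ down to $H^V_\mu$, such $z$ satisfy $z \cap H^V_\mu \notin V$, hence $z \cap H^V_\mu \notin H^V_\mu$, so the corresponding index lands in the external part of $H^V_\mu$. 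The paper sharpens Krueger by fixing a stationary--costationary $T \subseteq \omega_1$ in advance so that the internal part is forced to contain $T$ automatically; then only the single external name needs stationary correctness, and $\text{PFA}^{+1}$ already suffices (Theorem~\ref{thm_FromPFAplus1}).
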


We prove that the assumptions of his Theorems \ref{thm_Krueger_PFAplus1} and \ref{thm_Krueger_MM} are sharp, but the assumption of Theorem \ref{thm_Krueger_PFAplus2} is not:

\begin{theorem}\label{thm_Cox_PFAplus1necessary}
The assumption of $\text{PFA}^+$ in Theorem \ref{thm_Krueger_PFAplus1} cannot be replaced by PFA.  

\end{theorem}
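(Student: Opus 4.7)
The plan is to produce a model of PFA in which no disjoint stationary sequence on $\omega_2$ exists; by Theorem~\ref{thm_Krueger_PFAplus1}, any such model witnesses that global strictness of $\text{IC} \subseteq \text{IU}$ fails, which gives the theorem. The strategy is to combine the standard countable-support PFA iteration over a supercompact with the preservation of a suitable diagonal reflection principle ruling out disjoint stationary sequences.

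I would begin with a supercompact $\kappa$ in $V$ and perform a Laver-style preparation, making the supercompactness of $\kappa$ indestructible under proper forcings of size $<\kappa$. In the prepared model $V_0$, a diagonal reflection principle $\Phi$ at $\kappa$ should hold as a consequence of the (remaining) supercompactness. One natural candidate: whenever $T \subseteq \kappa \cap \text{cof}(\omega_1)$ is stationary and $\langle A_\alpha : \alpha \in T \rangle$ is a sequence with each $A_\alpha$ stationary in $\alpha \cap \text{cof}(\omega)$, some two coordinates $A_\alpha, A_\beta$ must have stationary intersection in $\alpha$; this trivially precludes a disjoint stationary sequence. Over $V_0$ I would then run the standard countable-support PFA iteration $\mathbb{P}$ of length $\kappa$, using a Laver function to enumerate proper forcings of size $<\kappa$, and Baumgartner's theorem delivers PFA in $V_0[G]$ with $\kappa = \omega_2$.

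The crux of the proof, and the main obstacle, is showing that $\Phi$ survives to $V_0[G]$. Following the generic-embedding paradigm emphasized in the abstract, one would lift the supercompact embedding $j : V_0 \to M$ to a generic $\tilde{j} : V_0[G] \to M[\tilde{G}]$ using the Laver preparation, and reflect any purported counterexample to $\Phi$ in $V_0[G]$ to a failure at some intermediate stage of the iteration, contradicting preservation by the proper tail. The principal technical difficulty is that Laver's preparation is not automatically compatible with the full countable-support PFA iteration, so one must either fortify the preparation or argue preservation directly from properness plus the $\kappa$-chain condition of $\mathbb{P}$, using only a fixed supercompact embedding rather than full indestructibility through the whole iteration. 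Once preservation is in hand, $V_0[G]$ satisfies PFA without any disjoint stationary sequence on $\omega_2$, and thus fails the global strictness of $\text{IC} \subseteq \text{IU}$, completing the argument.
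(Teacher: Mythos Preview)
Your proposal has a genuine gap at exactly the point you identify as the crux: you never actually prove that your principle $\Phi$ survives the PFA iteration, and there is no reason to believe it does. Laver preparation makes supercompactness indestructible under $<\kappa$-directed closed forcing, not under proper forcing of size $<\kappa$, so the indestructibility you invoke is not available. The standard PFA iteration is proper but far from directed closed, and proper forcing can freely add new stationary subsets of $[\gamma]^\omega$ and new filtrations; there is no off-the-shelf preservation lemma for a principle like your $\Phi$. You essentially concede this (``once preservation is in hand\ldots''), which means the argument as written is a plan rather than a proof. There is also a mismatch of types: a disjoint stationary sequence in Krueger's sense consists of pairwise disjoint stationary subsets of $[\gamma]^\omega$, whereas your $\Phi$ speaks of ordinal stationary subsets of $\gamma\cap\mathrm{cof}(\omega)$, so even granting $\Phi$ in $V_0[G]$ it is not clear it rules out the relevant object.

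The paper's argument avoids all of this by working in the opposite direction: rather than building a PFA model from below while trying to preserve an auxiliary principle, it starts from an arbitrary model of PFA and performs a \emph{single} $\sigma$-closed, $<\omega_2$-distributive forcing $\mathbb{C}(\text{IC}^*)$ that shoots a club through the set of $\gamma\in S^2_1$ with $\Phi[\gamma]\in\text{IC}$. This directly forces $\text{IC}=^*\text{IU}$ in $\wp_{\omega_2}(H_{\omega_2})$, which by Krueger's characterization is exactly the non-existence of a disjoint stationary sequence. Preservation of PFA is then handled by the paper's general preservation theorem (Theorem~\ref{thm_PreservationFA}): given any $\mathbb{C}(\text{IC}^*)$-name $\dot{\mathbb{Q}}$ for a proper poset, one follows it with $\dot{\mathbb{R}}=\dot{\mathbb{C}}^{\text{fin}}_{\text{dec}}(V\cap[\omega_2^V]^\omega)$, so that in the target model $N$ of the generic embedding one sees $H^V_{\omega_2}\in\text{IC}$ and hence $C\cup\{\omega_2^V\}$ is a condition in $j(\mathbb{C}(\text{IC}^*))$ below $j[C]$. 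This is short, requires no bookkeeping through an iteration, and makes the master-condition step completely explicit.
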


\begin{theorem}\label{thm_Cox_MMnecessary}
The assumption of MM in Theorem \ref{thm_Krueger_MM} cannot be replaced by $\text{PFA}^{+\omega_1}$.
\end{theorem}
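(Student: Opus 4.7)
The plan is to construct a model of $\text{PFA}^{+\omega_1}$ containing no disjoint club sequence on $\omega_2$, thereby refuting even the ``in particular'' clause of Theorem~\ref{thm_Krueger_MM} under the weakened hypothesis.  Starting from a ground model $V$ with a supercompact cardinal $\kappa$, I would build the desired model $W$ in two stages.  \emph{Stage 1:} perform a ${<}\kappa$-directed-closed Laver-style preparation $\mathbb{R}$ that preserves the supercompactness of $\kappa$ while killing every disjoint club sequence on $\kappa$ in $V$; killing a given $\langle C_\alpha : \alpha \in S \rangle$ is done by adjoining a club through $\kappa \setminus S$, which is fat since it contains $\kappa \cap \text{cof}(\omega)$.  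In $V_1 := V^{\mathbb{R}}$ there are then no disjoint club sequences on $\kappa$, and $\kappa$ remains supercompact.  \emph{Stage 2:} over $V_1$, run a standard countable-support iteration $\mathbb{P}_\kappa$ of proper forcings driven by a Laver function, forcing $\text{PFA}^{+\omega_1}$ and collapsing $\kappa$ to $\omega_2$ in $W := V_1[\mathbb{P}_\kappa]$.

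The crux is to verify that no disjoint club sequence on $\kappa$ appears in $W$.  I would do this via a lifting argument leveraging the supercompactness of $\kappa$: given an elementary embedding $j : V_1 \to M$ with critical point $\kappa$, lift $j$ through $\mathbb{P}_\kappa$ to $\bar{j} : W \to M[\bar{G}]$ using a master condition, and argue that a disjoint club sequence on $\kappa$ in $W$ must be the restriction of a corresponding object in $M[\bar{G}]$ that traces back to a disjoint club sequence on $\kappa$ in $V_1$, contradicting Stage 1.  The Woodin-style preservation template foreshadowed in the abstract would be the natural vehicle for formalizing this ``no new disjoint club sequence'' claim for the class of proper forcings appearing as factors in $\mathbb{P}_\kappa$.

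The main obstacle is precisely this preservation claim.  Proper forcings of size ${<}\kappa$ can certainly add new clubs on ordinals of cofinality $\omega_1$ below $\kappa$, so a crude cardinality argument does not suffice; the question is whether such new clubs can be assembled into a pairwise-disjoint family indexed by a stationary subset of $\kappa \cap \text{cof}(\omega_1)$.  I expect the right tool is Krueger's own translation of ``disjoint club sequence on $\omega_2$'' into the IU/IS distinction on $[H_{\omega_2}]^{\omega_1}$, combined with the supercompactness lifting to rule out generic substructures that are newly IU but not IS.  A secondary subtlety is bookkeeping: the Laver function has to be arranged so that Stage 2 only introduces proper forcings satisfying the preservation hypothesis, which may require a careful restriction of the class of forcings used to attain $\text{PFA}^{+\omega_1}$.
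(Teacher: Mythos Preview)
Your proposal has a genuine gap, and it is the one you yourself flag: the preservation of ``no disjoint club sequence on $\kappa$'' through the Stage~2 iteration.  Your lifting sketch does not close it.  If $\bar{j}:W\to M[\bar G]$ has critical point $\kappa$, then a disjoint club sequence on $\kappa$ in $W$ is sent to one on $\bar{j}(\kappa)>\kappa$ in $M[\bar G]$; there is no mechanism by which it ``traces back'' to a disjoint club sequence on $\kappa$ in $V_1$.  Nothing in the supercompactness of $\kappa$ in $V_1$ prevents the length-$\kappa$ proper iteration from building such a sequence, and indeed Krueger's own models of disjoint club sequences arise exactly from proper iterations below a supercompact.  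The Stage~1 preparation is therefore doing no work.

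The paper's argument is entirely different and much shorter.  One starts from an arbitrary model $V\models\text{PFA}^{+\omega_1}$ and forces in \emph{one step} with the $\sigma$-closed, ${<}\omega_2$-distributive poset $\mathbb{C}(\text{IS}^*)$ that shoots a club through $\{\gamma\in S^2_1:\Phi[\gamma]\in\text{IS}\}$.  This immediately forces $\text{IS}=^*\text{IU}$ in $\wp_{\omega_2}(H_{\omega_2})$, so by Krueger's equivalence (Theorem~\ref{thm_Krueger_DisjointSequences}) there is no disjoint club sequence in the extension.  Preservation of $\text{PFA}^{+\omega_1}$ comes from the key observation you are missing: \emph{every} proper poset $\dot{\mathbb{Q}}$ that collapses $\omega_2^V$ automatically forces $H^V_{\omega_2}\in\text{IS}$, since properness keeps $V\cap[\omega_2^V]^\omega$ stationary.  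In the generic embedding $j:V\to N$ provided by Theorem~\ref{thm_PreservationFA}, the stationary-correctness of the $(V,\mathbb{C}(\text{IS}^*)\!*\!\dot{\mathbb{Q}})$-generic in $N$ (this is precisely where ``$+\omega_1$'' is used) guarantees that the internal part of $H^V_{\omega_2}$ is stationary from $N$'s point of view, so $\omega_2^V\in j(\text{IS}^*)$ and $C\cup\{\omega_2^V\}$ is the required master condition below $j[C]=C$.  No tail forcing $\dot{\mathbb{R}}$ is needed.
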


\begin{theorem}\label{thm_Cox_DontNeedPlus2}
The conclusion of Theorem \ref{thm_Krueger_PFAplus2} also follows from $\text{PFA}^{+}$ and from Martin's Maximum,\footnote{Separation of IC from IS under Martin's Maximum was claimed in Theorem 4.4 part (3) of Viale~\cite{Viale_GuessingModel}, but the argument given there implicitly used the stronger assumption $\text{MM}^{+2}$.  See Section \ref{sec_MM_Viale} for a discussion.} but not from PFA.
\end{theorem}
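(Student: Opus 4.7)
\textbf{Positive directions.} I would try to adapt Krueger's $\text{PFA}^{+2}$ argument for Theorem \ref{thm_Krueger_PFAplus2} so that it runs with only one stationary side-condition. Fix regular $\theta \ge \omega_2$ and a club $C \subseteq \wp_{\omega_2}(H_\theta)$; the goal is to find $W \in C$ with $W \in \text{IS} \setminus \text{IC}$. The plan is to design a proper (resp.\ semiproper) forcing $P$ that generically adds a size-$\omega_1$ elementary substructure $W \prec H_\theta$, with $\omega_1 \subseteq W$ and $W \in C$, together with a $P$-name $\dot T \subseteq \omega_1$ coding a stationary collection of countable subsets of $W$ that are \emph{not} members of $W$ (which will witness $W \notin \text{IC}$ once $\dot T^G$ is preserved stationary in $V$). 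The IS-witness---stationarily many countable subsets of $W$ that \emph{are} members of $W$---must be built into the combinatorial structure of $P$ so as not to require a second, independent stationary-preservation hypothesis. One promising direction is to interleave the ``in $W$'' and ``out of $W$'' generic sequences along the construction, so that combinatorial genericity plus properness automatically witnesses the stationarity of one while only the other needs the $+$-clause of $\text{PFA}^+$. Applying $\text{PFA}^+$ to $(P, \dot T)$ then yields the required $W$ in $V$. The MM case reduces either via the (standard but nontrivial) implication $\text{MM} \Rightarrow \text{PFA}^+$ or by running the analogous semiproper construction directly under Martin's Maximum.

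The main obstacle in the positive part is precisely this forcing redesign: Krueger's $\text{PFA}^{+2}$ used two stationary names because the natural construction separately preserves the IS and not-IC witnesses. Collapsing these into a single stationary preservation requires an elementary-submodel amalgamation argument showing that once the ``not in $W$'' sequence is forced stationary in $V$, the ``in $W$'' sequence is automatically stationary as well.

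\textbf{Negative direction.} To show PFA alone does not suffice, I would produce a model of PFA in which $\text{IS} \setminus \text{IC}$ is nonstationary in $\wp_{\omega_2}(H_\theta)$ for some regular $\theta \ge \omega_2$. Starting from a Baumgartner-style PFA model built over a supercompact cardinal, I would further force with a countable-support iteration of a club-shooting forcing that, at each $\theta$ of interest, adds a club through $\wp_{\omega_2}(H_\theta) \setminus (\text{IS} \setminus \text{IC})$. The principal obstacle is PFA preservation along this iteration, handled by a Beaudoin--Magidor-style argument verifying properness of the club-shooting step and its compatibility with arbitrary proper forcings from the ground model; a secondary point is to verify at each stage that $\wp_{\omega_2}(H_\theta) \setminus (\text{IS} \setminus \text{IC})$ is still stationary in the relevant intermediate model, which should follow from reflection principles available in the iteration.
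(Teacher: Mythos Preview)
Your high-level intuition for the positive direction is right: arrange the forcing so that the IS-witness is automatic from the combinatorics, and reserve the single $+$-clause for the $\neg$IC-witness. But your proposal stops short of the mechanism that actually makes this work, and the ``elementary-submodel amalgamation'' you gesture at is not what is needed. The paper's construction (Section~\ref{sec_ControlInternal}) is concrete and quite different in flavor. Fix a stationary, costationary $T \subseteq \omega_1$. The poset is $\text{Add}(\omega)$ followed by a Neeman-style continuous $\in$-chain shooting through the set
\[
Y^\theta_T := \text{Lift}^{H^V_\theta[\sigma]}\Big( \big([H^V_\theta]^\omega\big)^V \ \cup\ \big([\omega_2]^\omega \searrow T^c\big) \setminus V \Big),
\]
followed by a c.c.c.\ sealing step. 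The asymmetry in $Y^\theta_T$ is the whole point: if $Q_i \cap \omega_1 = i \in T$, the definition \emph{forces} $Q_i \cap H^V_\theta \in V$, so the internal part of $H^V_\theta$ automatically contains the stationary set $T$ --- no $+$-clause needed for IS. The external-part witness lives inside $T^c$ and comes from $[\omega_2]^\omega \setminus V$; its stationarity in $V[\sigma]$ is supplied by the Gitik--Velickovic theorem (Theorem~\ref{thm_Velick}) that after a Cohen real, $[\omega_2]^\omega \setminus V$ is projective stationary. That theorem is the missing ingredient in your plan; without it you have no source of ``new'' countable sets to populate the external part.

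Two further gaps. First, your MM argument via ``$\text{MM} \Rightarrow \text{PFA}^+$'' is not safe: that implication is not a standard theorem, and the paper does \emph{not} go that route. Instead it runs a parallel argument with an $\omega_1$-SSP (not proper) poset $\mathbb{Q}^{\omega_1\text{-SSP}}_{T,\theta}$, where the club-shooting set $X^\theta_T$ is defined more restrictively so that the internal part is \emph{exactly} $T$; then mere existence of a $(W,\mathbb{Q})$-generic filter (supplied by MM via Lemma~\ref{lem_WoodinKeyLemma}, with no $+$-clause at all) already gives $W \cap H_\theta \in \text{IS} \setminus \text{IC}$.

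Second, your negative direction is overbuilt. No iteration is needed: the paper simply observes that the model constructed for Theorem~\ref{thm_Cox_PFAplus1necessary} already works. There one starts from PFA, fixes a bijection $\Phi:\omega_2 \to H_{\omega_2}$, and shoots a single club through $\text{IC}^* := \{\gamma \in S^2_1 : \Phi[\gamma] \in \text{IC}\}$ with closed bounded conditions. This poset is $\sigma$-closed and $<\omega_2$-distributive, and in the extension $\text{IC} =^* \text{IU}$ in $\wp_{\omega_2}(H_{\omega_2})$, so in particular $\text{IS} \setminus \text{IC}$ is nonstationary there. PFA preservation goes through Theorem~\ref{thm_PreservationFA}: given $\dot{\mathbb{Q}}$ proper, take $\dot{\mathbb{R}}$ to shoot a continuous chain through $V \cap [\omega_2^V]^\omega$, which forces $H^V_{\omega_2} \in \text{IC}$ and hence supplies the master condition $C \cup \{\omega_2^V\}$ on the $j$-side.
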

Note that the $\text{PFA}^+$ portion of Theorem \ref{thm_Cox_DontNeedPlus2}---i.e.\ that $\text{PFA}^+$ implies that the inclusion $\text{IC} \subseteq \text{IS}$ is globally strict---strengthens both Theorems \ref{thm_Krueger_PFAplus1} and \ref{thm_Krueger_PFAplus2}.  Figure \ref{fig_StrictInclusions} summarizes the theorems above.

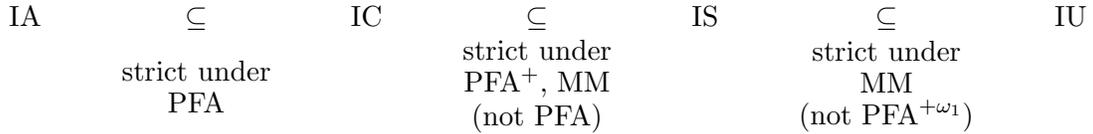
\begin{figure}[!h]
\caption{When the inclusions from \eqref{eq_Inclusions} are globally strict.
}\label{fig_StrictInclusions}
\begin{displaymath}
\xymatrix@R-1.9pc{
\text{IA}   & \subseteq & \text{IC} & \subseteq & \text{IS} & \subseteq & \text{IU} \\
& \txt{strict under\\PFA} &   &  \txt{strict under\\$\text{PFA}^+$, MM \\ (not PFA)} & & \txt{strict under\\MM\\(not $\text{PFA}^{+\omega_1}$)}& 
}
\end{displaymath}
\end{figure}

Foreman and Todorcevic also considered stationary reflection principles for the classes in \eqref{eq_Inclusions}.  Given a (possibly finite) cardinal $\mu \le \omega_1$ and a subclass $\Gamma$ of $\{ W \ : \ |W|=\omega_1 \subset W \}$, let $\text{RP}^\mu_\Gamma$ assert that for all regular $\theta \ge \omega_2$ and every $\mu$-sized collection $\mathcal{S}$ of stationary subsets of $[H_\theta]^\omega$, there is a $W \in \Gamma$ such that $S \cap [W]^\omega$ is stationary in $[W]^\omega$ for every $S \in \mathcal{S}$.  We usually write $\text{RP}_\Gamma$ for $\text{RP}^1_\Gamma$.   Now \eqref{eq_Inclusions} clearly implies 
\begin{equation}\label{eq_RP_implic}
\text{RP}_{\text{IA}} \implies   \text{RP}_{\text{IC}} \implies   \text{RP}_{\text{IS}}  \implies \text{RP}_{\text{IU}}.
\end{equation}

Krueger, answering another question of Foreman-Todorcevic~\cite{MR2115072}, proved that the implication $\text{RP}_{\text{IC}} \ \implies \ \text{RP}_{\text{IS}}$ cannot be reversed; in fact:
\begin{theorem}[Krueger~\cite{MR2674000}]\label{thm_Krueger_RPIS_RPIC}
$\text{RP}^{\omega_1}_{\text{IS}}$ does not imply $\text{RP}_{\text{IC}}$.\footnote{The theorem stated in Theorem 5.1 of \cite{MR2674000} just says that $\text{RP}^{\omega_1}_{\text{IS}}$ does not imply $\text{RP}_{\text{IA}}$, but the proof there clearly shows that even $\text{RP}_{\text{IC}}$ fails in his model.}  
\end{theorem}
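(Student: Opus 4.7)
The plan is to build a forcing extension $V[G]$ in which $\text{RP}^{\omega_1}_{\text{IS}}$ holds but some single, specifically constructed stationary set refuses to reflect to any internally club set.  Start with a ground model $V \models \text{MM}$ obtained from a supercompact, so that $V$ satisfies the strong principle $\text{RP}^{\omega_1}_{\text{IA}}$ (hence $\text{RP}^{\omega_1}_{\text{IS}}$).  Pass to $V[G]$ by a $\sigma$-closed forcing $\mathbb{P}$ whose conditions are bounded approximations to a disjoint club sequence $\vec C = \langle C_\alpha : \alpha \in T\rangle$ on a stationary $T \subseteq \omega_2 \cap \text{cof}(\omega_1)$, ordered by end-extension.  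The $\sigma$-closure is immediate, so $\mathbb{P}$ preserves $\omega_1$ and adds no countable sets; in particular, $\text{IA}^V \subseteq \text{IS}^{V[G]}$ for the underlying sets.

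In $V[G]$, I would then use $\vec C$ to produce a stationary set $S \subseteq [H_{\omega_2}]^\omega$ witnessing the failure of $\text{RP}_{\text{IC}}$.  The key combinatorial observation is that if $W \in \text{IC}^{V[G]}$ and $\gamma(W) := \sup(W \cap \omega_2)$ lies in $T$, then $W \cap \omega_2$ contains a club in $\gamma(W)$, which by pairwise disjointness of the $C_\alpha$'s is almost contained in $C_{\gamma(W)}$ alone.  One can therefore choose $S$ to be the stationary set of countable $x \subseteq H_{\omega_2}$ whose cofinal sequence in $x\cap \omega_2$ is not "trapped" in any single $C_\alpha$ (for example, countable $x$ meeting cofinally many $C_\alpha$'s for $\alpha$ in an appropriately defined stationary subset of $T$).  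Then $S \cap [W]^\omega$ is nonstationary for every $W \in \text{IC}$ with $\gamma(W) \in T$; a Fodor-style pressing-down argument on $\omega_2 \cap \text{cof}(\omega_1) \setminus T$, together with the $\sigma$-closure of $\mathbb{P}$, handles the remaining $W$'s.

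The main obstacle is verifying that $\text{RP}^{\omega_1}_{\text{IS}}$ is preserved in $V[G]$.  Given a family $\dot{\vec{\mathcal S}}\in V^{\mathbb P}$ of $\mathbb{P}$-names for $\omega_1$-many stationary subsets of $[H_\theta]^\omega$, the strategy is to apply $\text{RP}^{\omega_1}_{\text{IA}}$ in $V$ to the "reflected" problem for names inside a suitable elementary submodel, obtaining a $W \in V$ with $W \in \text{IA}^V$ that virtually reflects every $\dot S$ in the family.  Because $\mathbb{P}$ is $\sigma$-closed, the countable subsets of $W$ in $V[G]$ coincide with those in $V$, so $W$ remains internally stationary in $V[G]$, and a master-condition/elementary-submodel argument lifts the reflection to genuine stationarity of each $(\dot S)_G \cap [W]^\omega$ in $V[G]$.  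The delicate point here, and the step I would expect to consume most of the work, is the \emph{simultaneous} treatment of $\omega_1$-many stationary names, which requires exploiting the closure of $\mathbb P$ carefully and is the reason one cannot replace $\sigma$-closed by a milder preservation property.
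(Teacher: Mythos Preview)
Your preservation argument for $\text{RP}^{\omega_1}_{\text{IS}}$ has a genuine gap: $\sigma$-closure of $\mathbb{P}$ is not nearly enough.  Plenty of $\sigma$-closed posets destroy even $\text{WRP}(\omega_2)$ --- the canonical example is the poset that adds a nonreflecting stationary subset of $S^2_0$ by bounded conditions --- so the claim that a ``master-condition/elementary-submodel argument lifts the reflection'' needs real content that you have not supplied.  Concretely: given $\omega_1$ many $\mathbb{P}$-names $\langle \dot S_i\rangle$ for stationary subsets of $[H_\theta]^\omega$, you have not said which ground-model stationary sets you feed into $\text{RP}^{\omega_1}_{\text{IA}}$, nor why the resulting $W \in \text{IA}^V$ should have $(\dot S_i)_G \cap [W]^\omega$ stationary in $V[G]$ for every $i$.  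The fact that $[W]^\omega$ is unchanged tells you $W$ stays in IS, but it says nothing about whether \emph{new} stationary sets reflect to it.  There is also a type confusion in your disjoint-club-sequence step: the $C_\alpha$ in a disjoint club sequence are clubs in $[\alpha]^\omega$, not in $\alpha$, so ``$W\cap\omega_2$ contains a club in $\gamma(W)$ which is almost contained in $C_{\gamma(W)}$'' does not parse.

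Krueger's actual argument, and the strengthening in this paper, avoid the preservation problem by working with a \emph{forcing axiom} rather than with the reflection principle directly.  One starts from a model of $\text{PFA}^{+\omega_1}$ (or the fragment $\text{PFA}^{+\omega_1}_{H^V_{\omega_2}\notin\text{IC}}$) and forces with $\mathbb{P}_{\text{nrIC}}$, which adds a stationary $G\subseteq S^2_0$ whose lift $\widetilde G = \{z\in[\omega_2]^\omega : \sup(z)\in G\}$ reflects to no IC set.  The point is that $\mathbb{P}_{\text{nrIC}}$ \emph{preserves the forcing axiom}: for any proper $\dot{\mathbb Q}$ forcing $H^V_{\omega_2}\notin\text{IC}$, the two-step $\mathbb{P}_{\text{nrIC}}*\dot{\mathbb Q}$ is again in the relevant class, and in the generic-embedding picture $j[G]=G$ is already a condition in $j(\mathbb{P}_{\text{nrIC}})$ precisely because $\omega_2^V\notin j(\text{IC}^*)$.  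The surviving forcing axiom then \emph{implies} $\text{RP}^{\omega_1}_{\text{IS}}$ (indeed $\text{DRP}_{\text{internal,GIS}}$) in the extension, so one never has to preserve the reflection principle by hand.
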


Fuchino-Usuba~\cite{FuchinoUsuba} proved another result related to \eqref{eq_RP_implic}.  They introduced a game-theoretic principle denoted $G^{\downarrow \downarrow}$, proved it is equivalent to a version of Strong Chang's Conjecture (see Definition \ref{def_GlobalSCCcofgap} below), and also equivalent to a reflection principle that they did not name, but which we call $\text{RP}_{\text{internal}}$ (see Section \ref{sec_Prelims}).  They proved:
\begin{theorem}[Fuchino-Usuba~\cite{FuchinoUsuba}]\label{thm_FuchinoUsubaImplications}
\[
\text{RP}_{\text{IC}} \implies   \text{RP}_{\text{internal}} \iff  G^{\downarrow \downarrow} \implies \text{RP}_{\text{IS}}.
\]
\end{theorem}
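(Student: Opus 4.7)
The plan is to establish the three pieces separately, following what appears to be the natural division of the statement. Throughout, I would first pin down exactly what $\text{RP}_{\text{internal}}$ asserts---presumably the principle that for every regular $\theta\ge\omega_2$ and every stationary $S\subseteq[H_\theta]^\omega$, there is a $W$ with $\omega_1\subset W$, $|W|=\omega_1$, such that $S$ reflects to $[W]^\omega$ via the internal part of $W$, i.e.\ $S\cap(W\cap[W]^\omega)$ is stationary in $[W]^\omega$. With that definition in hand, the leftmost implication is essentially bookkeeping: if $W\in\text{IC}$, then $W\cap[W]^\omega$ contains a club in $[W]^\omega$, so any stationary subset of $[W]^\omega$ (in particular $S\cap[W]^\omega$, produced by $\text{RP}_{\text{IC}}$) meets the internal part on a stationary set. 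Thus any witness to $\text{RP}_{\text{IC}}$ is automatically a witness to $\text{RP}_{\text{internal}}$.

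The heart of the theorem is the equivalence $\text{RP}_{\text{internal}}\iff G^{\downarrow\downarrow}$, and this is where I would spend the bulk of the effort. The game $G^{\downarrow\downarrow}$ is a length-$\omega$ contest in which the two players cooperate to build a continuous increasing chain of countable elementary submodels, with Player II needing to respond to each ``downward'' challenge of Player I by producing a model extending everything so far and capturing the challenge. For the direction $G^{\downarrow\downarrow}\Rightarrow\text{RP}_{\text{internal}}$: fix a winning strategy $\sigma$ for Player II and a stationary $S\subseteq[H_\theta]^\omega$. Pass to a sufficiently large $H_\Theta$ containing $\sigma,S,H_\theta$ and build $W$ of size $\omega_1$ as a continuous union of countable $N_\alpha\prec H_\Theta$ via a long $\sigma$-run, where at each stage Player I's challenges are scheduled so as to (i) force the union to see all of $H_\theta$'s important parameters and (ii) enumerate a stationary family of candidate elements of $S$. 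Since $\sigma$ is winning, the game's success clause will imply that $S\cap(W\cap[W]^\omega)$ is stationary in $[W]^\omega$. For the converse $\text{RP}_{\text{internal}}\Rightarrow G^{\downarrow\downarrow}$: I would describe Player II's strategy as follows. At each move, Player II faces a finite play and some challenge element; apply $\text{RP}_{\text{internal}}$ to a carefully chosen stationary ``targeting'' set coded from the current state, obtain a reflecting $W$, and read off a countable $N$ from the internal part of $W$ extending the previous model and containing the challenge. Iterating over $\omega$ moves, the reflection property will guarantee that no dead-end ever occurs and that the resulting chain has the required winning property.

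For the rightmost implication $G^{\downarrow\downarrow}\Rightarrow\text{RP}_{\text{IS}}$, the extra content over $\text{RP}_{\text{internal}}$ is that the witness $W$ can be arranged to lie in IS, not merely that the internal part is nonempty on a stationary set of $S$-candidates. The plan is to play the game against a doubly-indexed family of challenges: one family enumerates elements of $S$ (to secure stationary reflection of $S$), and a second family enumerates dense open subsets of $[W]^\omega$ (to secure that the internal part itself is stationary in $[W]^\omega$). Because Player II's strategy is winning against arbitrary legitimate Player I plays, this combined enumeration still yields a legal losing-for-I run, and the resulting $W$ witnesses both $W\in\text{IS}$ and stationary reflection of $S$ to $[W]^\omega$.

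The main obstacle will be the game-to-reflection direction: correctly designing Player I's moves in the $\sigma$-run of length beyond $\omega$ so that the countable unions produced by the strategy assemble into a single $W$ of size $\omega_1$ whose internal part genuinely captures a stationary subset of $S$. In particular, one must be careful that the transfinite iteration of $\sigma$ remains coherent at limits and that the ``downward'' character of both players' moves is respected at every stage; this is presumably where the specific definition of $G^{\downarrow\downarrow}$ does the heavy lifting, and I would lean on Fuchino--Usuba's formulation to make the bookkeeping go through.
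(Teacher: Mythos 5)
The paper does not actually prove this theorem; it is quoted verbatim from Fuchino--Usuba, and even the definition of the game $G^{\downarrow\downarrow}$ is left to that reference. So the only honest comparison is with what the paper \emph{does} record about these implications, namely that the two outer arrows are soft. Your treatment of the left arrow is correct and is exactly the paper's Observation~\ref{obs_RPIC_automaticInternal}: for $W\in\text{IC}$ the internal part $W\cap[W]^\omega$ contains a club of $[W]^\omega$, so any stationary trace $S\cap[W]^\omega$ meets it stationarily. But you have missed that the right arrow is equally trivial once the middle equivalence is granted: if $W$ witnesses $\text{RP}_{\text{internal}}$ for $S$, then $S\cap W\cap[W]^\omega$ is stationary in $[W]^\omega$, hence so is the larger set $W\cap[W]^\omega$ --- i.e.\ $W\in\text{IS}$ automatically --- and $S\cap[W]^\omega\supseteq S\cap W\cap[W]^\omega$ is stationary. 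Your proposed ``doubly-indexed family of challenges'' for this step is solving a problem that does not exist, and the phrase ``dense open subsets of $[W]^\omega$'' does not make sense as a way to force $W\in\text{IS}$ in any case.

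The genuine gap is the central equivalence $\text{RP}_{\text{internal}}\iff G^{\downarrow\downarrow}$, which is the entire content of the theorem, and your sketch of it is not a proof. You do not have the definition of $G^{\downarrow\downarrow}$, and the plan you describe has a structural flaw: $G^{\downarrow\downarrow}$ is a game of length $\omega$, and a winning strategy for a length-$\omega$ game cannot simply be ``run'' transfinitely for $\omega_1$ stages to assemble an $\omega_1$-sized $W$ --- there is no coherence at limits to appeal to, which you yourself flag as ``presumably where the specific definition does the heavy lifting.'' That is precisely the step that needs an argument. The actual Fuchino--Usuba route passes through the version of Strong Chang's Conjecture recorded here as Definition~\ref{def_GlobalSCCcofgap}: the game principle is first shown equivalent to the statement that every countable $M\prec(H_\theta,\in,\Delta)$ has $\subseteq$-cofinally many $\omega_1$-sized end-extensions $W$ with $\text{Sk}(M\cup\{W\})\cap W=M$, and one then builds the reflecting $W$ as a continuous increasing union of $\omega_1$ many $\omega_1$-sized models using that extension property, arranging along the way that stationarily many of the countable approximations land in $S$ and are caught in $W$. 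Likewise your converse direction (``apply $\text{RP}_{\text{internal}}$ to a carefully chosen stationary targeting set \dots\ and read off a countable $N$ from the internal part'') names no targeting set and gives no reason the read-off $N$ answers Player~I's challenge. As it stands the proposal establishes only the (easy) leftmost implication.
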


We show below that the left implication of Theorem \ref{thm_FuchinoUsubaImplications} cannot be reversed.  This was already implicit in Krueger's model from Theorem \ref{thm_Krueger_RPIS_RPIC}, but our Theorem \ref{thm_Cox_PreservePFAnegIC} below strengthens and simplifies his result in several ways.  In particular, the model of Theorem \ref{thm_Cox_PreservePFAnegIC}:  
\begin{enumerate*}
 \item satisfies PFA (and a ``plus" version of a fragment of PFA);
 \item can be forced over an arbitrary model of $\text{PFA}^{+\omega_1}$ in a single step; and
 \item satisfies ``diagonal internal reflection to guessing, internally stationary sets" ($\text{DRP}_{\text{internal, GIS}}$), which is a highly simultaneous form of internal stationary reflection to the so-called \emph{guessing} sets used by Viale-Weiss~\cite{VW_ISP} to characterize the principle ISP (see Section \ref{sec_Prelims}).
\end{enumerate*}

In what follows, GIC refers to the class of guessing, internally club sets, and GIS refers to the class of guessing, internally stationary sets.  We also introduce some fragments of standard forcing axioms.  $\boldsymbol{\textbf{PFA}^{+\omega_1}_{H^V_{\omega_2} \notin \textbf{IC}}}$ denotes the forcing axiom $\text{FA}^{+\omega_1}(\Gamma)$, where $\Gamma$ is the class of proper posets that force $H^V_{\omega_2} \notin \text{IC}$;  $\boldsymbol{\textbf{PFA}^{+\omega_1}_{H^V_{\omega_2} \notin \textbf{IA}}}$ is defined similarly (see Section \ref{sec_ForcingAxiomPrelims} for the meaning of $\text{FA}^{+\omega_1}(\Gamma)$).  $\boldsymbol{\textbf{MM}^{+\omega_1}_{H^V_{\omega_2} \notin \textbf{IS}}}$ denotes the forcing axiom $\text{FA}^{+\omega_1}(\Gamma)$, where $\Gamma$ is the class of posets that preserve stationary subsets of $\omega_1$ and force $H^V_{\omega_2} \notin \text{IS}$.

\begin{theorem}\label{thm_PFAplusNotGICimpliesDRPGIS}
$\text{PFA}^{+\omega_1}_{H^V_{\omega_2} \notin \text{IC}}$ implies $\text{DRP}_{\text{internal}, GIS}$ (see Section \ref{sec_Prelims}), a highly simultaneous version of internal stationary reflection to GIS sets (in particular, this implies $\text{RP}^{\omega_1}_{\text{IS}}$).
\end{theorem}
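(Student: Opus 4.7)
The plan is to reduce $\text{DRP}_{\text{internal}, GIS}$ to a single application of $\text{FA}^{+\omega_1}$ to a carefully chosen proper poset $\mathbb{P}$ that happens to force $H^V_{\omega_2} \notin \text{IC}$, so that $\text{PFA}^{+\omega_1}_{H^V_{\omega_2} \notin \text{IC}}$ actually applies to it. Given a regular $\theta \geq \omega_2$ and a sequence $\vec{S} = \langle S_\alpha : \alpha < \omega_2 \rangle$ of stationary subsets of $[H_\theta]^\omega$ that one wishes to diagonally reflect, the goal is to produce in $V$ a GIS set $W \in [H_\theta]^{\omega_1}$ with $\omega_1 \subseteq W$ such that $S_\alpha \cap [W]^\omega$ is stationary for every $\alpha \in W \cap \omega_2$.

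The first step is to design a Viale-Weiss/Neeman-style side-condition poset $\mathbb{P}$, whose conditions are finite $\in$-chains of countable elementary submodels of a suitable $H_\lambda \supseteq H_\theta$, each of which is a \emph{guessing model} and, via a bookkeeping function, lies in $S_\alpha$ for some assigned $\alpha < \omega_2$. The generic union $\dot{W}$ is then a subset of $H_\theta$ of size $\omega_1$ that is internally stationary, guessing, and reflects each $S_\alpha$ for $\alpha \in \dot{W}$ by design. Properness of $\mathbb{P}$ should follow from standard arguments that exploit the stationarity of each $S_\alpha$ inside the relevant side-condition model, in the spirit of Viale-Weiss.

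The second and most delicate step is to verify that $\mathbb{P}$ lies in the restricting class, namely that $\mathbb{P}$ forces $H^V_{\omega_2} \notin \text{IC}$. Since $\mathbb{P}$ collapses $\omega_2^V$ to $\omega_1$, if $H^V_{\omega_2}$ were internally club in $V[G]$ then there would exist a $V[G]$-continuous $\subseteq$-chain $\langle N_\xi : \xi < \omega_1 \rangle$ of countable subsets of $H^V_{\omega_2}$ lying in $V$ with union $H^V_{\omega_2}$. The side-condition geometry of $\mathbb{P}$ obstructs this, because the generic chain of side-condition models is $\in$-continuous only at guessing levels and the generic guessing models themselves are not in $V$. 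This is the main obstacle, and it is precisely here that one would invoke the Woodin-style absoluteness framework that the paper advertises adapting.

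With $\mathbb{P}$ proper and in the restricted class, apply $\text{FA}^{+\omega_1}(\mathbb{P})$ to $\omega_1$-many dense sets ensuring that $\omega_1 \subseteq W$, that every relevant $\alpha < \omega_2$ is captured into $W$, and that $W$ has full guessing quality; together with $\omega_1$-many names for stationary subsets of $\omega_1$, one for each $\alpha < \omega_2$ encountered along $G$, each asserting that the set of $\xi$ for which the $\xi$-th generic side-condition model lies in $S_\alpha$ is stationary. The ``plus" clause transfers this stationarity back to $V$, so that the resulting $W \in V$ is GIS and $S_\alpha \cap [W]^\omega$ is stationary for every $\alpha \in W \cap \omega_2$, yielding $\text{DRP}_{\text{internal}, GIS}$.
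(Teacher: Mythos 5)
Your overall strategy --- apply the restricted $\text{FA}^{+\omega_1}$ to a single proper poset that collapses $H_\theta$ to $\omega_1$ while forcing $H^V_{\omega_2}\notin\text{IC}$, and use the plus-clause to pull the stationarity of the sets $\{\xi : Q_\xi \in S\}$ back to $V$ --- is the right one, and it is essentially what the paper does. But your second step is not merely delicate; for the poset you describe it is false. Your side conditions are finite $\in$-chains of countable elementary submodels taken \emph{in $V$}, so every model occurring in the generic continuous chain is an element of $H^V_\theta$ lying in $V$. The generic filtration therefore witnesses $H^V_\theta\in\text{IC}$ in the extension, and hence (by Observation \ref{obs_InternalPartProjects}) $H^V_{\omega_2}\in\text{IC}$ as well --- exactly the opposite of what you need for $\text{PFA}^{+\omega_1}_{H^V_{\omega_2}\notin\text{IC}}$ to apply to your poset. (Your remark that ``the generic guessing models themselves are not in $V$'' is where this goes wrong: they \emph{are} in $V$, being components of conditions of a poset defined in $V$.) The missing idea is the paper's Theorem \ref{thm_TwoPosets}: first add a Cohen real, invoke the Gitik--Velickovic Theorem \ref{thm_Velick} to see that $\big([\omega_2]^\omega\searrow T^c\big)\setminus V$ is stationary, and then shoot the continuous chain through the union of the lifting of $([H^V_\theta]^\omega)^V$ with that set. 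This forces the external part of $H^V_{\omega_2}$ to be stationary (so $\notin\text{IC}$) while keeping the internal part stationary (so $\in\text{IS}$); it is also how $W\cap H_\theta$ becomes guessing --- via the $\omega_1$-approximation property of the strongly proper iteration together with the sealing poset $\dot{\mathbb{S}}(H^V_\theta)$ --- rather than by requiring the countable side-condition models themselves to be guessing, a requirement that does not make the $\omega_1$-sized union a guessing model.

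Second, your reduction of $\text{DRP}_{\text{internal},\text{GIS}}$ to a fixed sequence $\langle S_\alpha : \alpha<\omega_2\rangle$ with bookkeeping loses the content of the principle. $\text{DRP}$ demands stationarily many $W\in\wp_{\omega_2}(H_{(2^\theta)^+})$ reflecting \emph{every} stationary $S\in W$, and there are far more than $\omega_2$ stationary subsets of $[H_\theta]^\omega$ to account for, so no single $\omega_2$-enumeration fixed in advance suffices. The paper needs no bookkeeping: since the poset is proper, every ground-model stationary $R\subseteq[H_\theta]^\omega$ remains stationary, so the name $\dot{S}_R$ for $\{i : \dot Q_i\in R\}$ is automatically forced to be stationary for \emph{every} such $R$ (and $\dot Q_i$ is forced into $V$, hence into $W$, for $i\in\dot S_R$, which is what makes the reflection \emph{internal}); Woodin's Lemma \ref{lem_WoodinKeyLemma}, in the form of Remark \ref{rem_WoodinVariant}, then yields stationarily many $W$ admitting a $W$-generic filter that is stationary-correct for all names in $W$ simultaneously, which handles exactly the $R\in W$ that $\text{DRP}$ requires.
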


\begin{theorem}\label{thm_Cox_PreservePFAnegIC}
There is a $< \!\! \omega_2$ strategically closed poset that preserves $\text{PFA}^{+\omega_1}_{H^V_{\omega_2} \notin \text{IC}}$ and forces $\neg \text{RP}_{\text{IC}}$.  Moreover, if PFA held in the ground model, then PFA is also preserved.
\end{theorem}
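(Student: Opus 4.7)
The plan is to construct a $<\omega_2$-strategically closed poset $\mathbb{P}$ that generically adds a stationary set witnessing $\neg\text{RP}_{\text{IC}}$, and then verify preservation of the restricted forcing axiom via a two-step iteration plus the Woodin-style trick alluded to in the abstract. For $\mathbb{P}$ I would adopt a variant of the Krueger-style poset from the proof of Theorem~\ref{thm_Krueger_RPIS_RPIC}: conditions are continuous $\in$-chains $\langle N_\alpha : \alpha \le \gamma\rangle$ of countable elementary submodels of some fixed $H_{\theta^*}$ with $\gamma<\omega_2$, ordered by end-extension. A book-keeping strategy for Player~II shows $\mathbb{P}$ is $<\omega_2$-strategically closed, hence adds no new $\omega_1$-sequences and preserves $H^V_{\omega_2}$. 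The generic chain of length $\omega_2$ produces a stationary set $S=\{N_\alpha : \alpha<\omega_2\}$ in $[H_{\theta^*}]^\omega$; a density argument shows that for every $W\in\text{IC}\cap[H_{\theta^*}]^{\omega_1}$ with $W\prec H_{\theta^*}$, any internal club of countable submodels of $W$ can be avoided by generic conditions past stage $\sup(W\cap\omega_2)$, so $S\cap[W]^\omega$ is nonstationary in $[W]^\omega$. Moreover, the initial segments $\bigcup_{\alpha<\beta}N_\alpha$ of size $\omega_1$ are themselves not internally club in the extension, so $\mathbb{P}$ robustly forces the generic chain to witness non-IC behavior---the feature that drives the preservation argument.

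For preservation of $\text{PFA}^{+\omega_1}_{H^V_{\omega_2}\notin\text{IC}}$, suppose in $V$ that $\dot{\mathbb{Q}}$ is a $\mathbb{P}$-name for a proper poset forced to make $H^{V^\mathbb{P}}_{\omega_2}$ not IC, along with $\mathbb{P}$-names for the requisite $\omega_1$-many dense sets and $\omega_1$-many stationary sets. Form the iteration $\mathbb{P}*\dot{\mathbb{Q}}$: it is proper ($\mathbb{P}$ is strategically closed, hence proper, and forces $\dot{\mathbb{Q}}$ proper), and since $\dot{\mathbb{Q}}$ collapses $H^V_{\omega_2}=H^{V^\mathbb{P}}_{\omega_2}$ and keeps it out of IC, so does $\mathbb{P}*\dot{\mathbb{Q}}$. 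Therefore $\text{PFA}^{+\omega_1}_{H^V_{\omega_2}\notin\text{IC}}$ in $V$ applies to $\mathbb{P}*\dot{\mathbb{Q}}$, yielding a stationary-correct filter $K\subseteq\mathbb{P}*\dot{\mathbb{Q}}$ meeting the prescribed dense sets and reflecting the prescribed stationary sets in $V$.

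The main obstacle is the transfer step: the axiom yields a filter in $V$, whereas what is needed is a filter in $V^\mathbb{P}$ whose $\mathbb{P}$-projection agrees with a prescribed $V$-generic $G\subseteq\mathbb{P}$. Following the Woodin trick (the paper's main technical innovation), the stationary correctness of the ``$+\omega_1$'' clause is exploited to absorb the choice of $G$: one encodes $G$-dependent data---available in $V$ because $\mathbb{P}$ adds no $\omega_1$-sequences, so all $V^\mathbb{P}$-objects of size $\aleph_1$ have names in $V$---as stationary subsets of $[H_\lambda]^\omega$, and uses reflection together with an elementarity argument on the stationary-correct embedding to force the $\mathbb{P}$-projection of $K$ to agree with $G$. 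Once this matching is secured, the $\dot{\mathbb{Q}}$-projection of $K$ interprets via $G$ to a filter in $V^\mathbb{P}$ meeting the prescribed dense sets, with the required stationary reflection. The final clause about full PFA preservation follows by the same scheme with the $H^V_{\omega_2}\notin\text{IC}$ bookkeeping dropped throughout: $\mathbb{P}*\dot{\mathbb{Q}}$ is proper for any proper $\dot{\mathbb{Q}}\in V^\mathbb{P}$, so PFA in $V$ applies directly and the same transfer argument goes through.
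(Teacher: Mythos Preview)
Your poset and your preservation mechanism are both off-target, and the second gap is the essential one.

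The paper does not use a chain-of-models poset. It uses Krueger's $\mathbb{P}_{\text{nrIC}}$: after fixing a bijection $\Phi:\omega_2\to H_{\omega_2}$ and setting $\text{IC}^*=\{\gamma\in S^2_1:\Phi[\gamma]\in\text{IC}\}$, conditions are functions $f:\alpha\cap\text{cof}(\omega)\to 2$ (for some $\alpha<\omega_2$) such that $\{\xi<\gamma:f(\xi)=1\}$ is nonstationary in $\gamma$ for every $\gamma\in\text{IC}^*$. The generic is then a stationary subset of $S^2_0$ that fails to reflect to any $\gamma\in\text{IC}^*$, which kills $\text{RP}_{\text{IC}}$. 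The point of this particular design is that \emph{conditions have no constraint whatsoever at ordinals outside $\text{IC}^*$}; this is exactly what drives preservation.

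Your transfer step---``encode $G$-dependent data as stationary sets and force the $\mathbb{P}$-projection of $K$ to agree with $G$''---is not a real argument, and the paper's mechanism is different in kind. The paper applies Theorem~\ref{thm_PreservationFA}: given a generic embedding $j:V\to N$ with $\text{crit}(j)=\omega_2^V$, $j\restriction H^V_\theta\in N$, and a stationary-correct $V$-generic $G*H\in N$ for $\mathbb{P}_{\text{nrIC}}*\dot{\mathbb{Q}}$, one must show $j[G]$ has a lower bound in $j(\mathbb{P}_{\text{nrIC}})$. Since $\dot{\mathbb{Q}}$ forces $H^V_{\omega_2}\notin\text{IC}$ and $G*H$ is stationary-correct, $N$ sees that $H^V_{\omega_2}\notin\text{IC}$; since $j(\Phi)[\omega_2^V]=H^V_{\omega_2}$, this means $\omega_2^V\notin j(\text{IC}^*)$. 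Hence $G=j[G]$, viewed as a function on $\omega_2^V\cap\text{cof}(\omega)$, is \emph{automatically} a condition in $j(\mathbb{P}_{\text{nrIC}})$---there is no constraint to check at $\omega_2^V$. That is the entire content of the ``$H^V_{\omega_2}\notin\text{IC}$'' restriction in the forcing axiom: it guarantees the master-condition step goes through for free. Your chain-of-models poset has no analogous feature; there is no evident reason $j[G]$ would have a lower bound in $j(\mathbb{P})$.

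Your final paragraph on full PFA preservation is also wrong: dropping the ``$\notin\text{IC}$'' hypothesis on $\dot{\mathbb{Q}}$ destroys the master-condition argument above. The paper instead invokes Yoshinobu's theorem that $\omega_1+1$-tactically closed posets preserve PFA (or equivalently, runs the Beaudoin--Magidor trick: take $\dot{\mathbb{R}}$ to shoot a club through $V\cap[\omega_2^V]^\omega$, making $H^V_{\omega_2}\in\text{IC}$ in $N$ so that $G\cup\{\omega_2^V\}$ becomes a condition).
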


\begin{corollary}\label{cor_FuchUsuba}
$\text{DRP}_{\text{internal, GIS}}$ does not imply $\text{RP}_{\text{IC}}$.  In particular, the implication $\text{RP}_{\text{IC}} \implies \text{RP}_{\text{internal}}$ from Theorem \ref{thm_FuchinoUsubaImplications} cannot be reversed.
\end{corollary}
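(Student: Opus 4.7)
The plan is to combine Theorems \ref{thm_PFAplusNotGICimpliesDRPGIS} and \ref{thm_Cox_PreservePFAnegIC} to produce a single model witnessing both statements of the corollary simultaneously. This is essentially bookkeeping once those two theorems are in hand.

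First, I would start with a ground model satisfying $\text{PFA}^{+\omega_1}_{H^V_{\omega_2} \notin \text{IC}}$; any model of $\text{PFA}^{+\omega_1}$ (or stronger, such as $\text{MM}^{+\omega_1}$) trivially satisfies this, and such models are obtained by the standard Laver-preparation of a supercompact. Then I would apply Theorem \ref{thm_Cox_PreservePFAnegIC} to force over this ground model with the $<\!\omega_2$-strategically closed poset produced there, obtaining an extension $V[G]$ in which $\text{PFA}^{+\omega_1}_{H^V_{\omega_2} \notin \text{IC}}$ is preserved and $\neg \text{RP}_{\text{IC}}$ holds.

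Next, applying Theorem \ref{thm_PFAplusNotGICimpliesDRPGIS} inside $V[G]$ gives $\text{DRP}_{\text{internal, GIS}}$ in $V[G]$. Since $V[G]$ also satisfies $\neg \text{RP}_{\text{IC}}$, this establishes the first assertion of the corollary: $\text{DRP}_{\text{internal, GIS}}$ does not imply $\text{RP}_{\text{IC}}$.

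For the ``in particular'' clause, it suffices to observe that $\text{DRP}_{\text{internal, GIS}}$ implies $\text{RP}_{\text{internal}}$. This should be an immediate unpacking of the definitions in Section \ref{sec_Prelims}: the diagonal principle trivially implies the single-stationary-set version, and a GIS witness is in particular an internally stationary (hence ``internal'') witness, so every $\text{DRP}_{\text{internal, GIS}}$ reflection point serves as an $\text{RP}_{\text{internal}}$ reflection point. Thus $V[G]$ satisfies $\text{RP}_{\text{internal}}$ while failing $\text{RP}_{\text{IC}}$, which is exactly the non-reversibility of the left implication in Theorem \ref{thm_FuchinoUsubaImplications}. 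There is no genuine obstacle here; the only point to double-check is the definitional implication $\text{DRP}_{\text{internal, GIS}} \Rightarrow \text{RP}_{\text{internal}}$, which will be visible once the relevant definitions are laid out in Section \ref{sec_Prelims}.
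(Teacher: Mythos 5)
Your proposal is correct and is exactly the argument the paper intends: the corollary is stated as an immediate consequence of Theorems \ref{thm_PFAplusNotGICimpliesDRPGIS} and \ref{thm_Cox_PreservePFAnegIC}, combined in precisely the way you describe (force with the poset of Theorem \ref{thm_Cox_PreservePFAnegIC} over a model of $\text{PFA}^{+\omega_1}$, then apply Theorem \ref{thm_PFAplusNotGICimpliesDRPGIS} in the extension). The definitional implication $\text{DRP}_{\text{internal, GIS}} \Rightarrow \text{RP}_{\text{internal}}$ that you flag does hold by the unpacking you anticipate, so there is nothing missing.
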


We also prove the following similar theorems:

\begin{theorem}\label{thm_PFAplusNotGIAimpliesDRPGIC}
$\text{PFA}^{+\omega_1}_{H^V_{\omega_2} \notin \text{IA}}$ implies $\text{DRP}_{GIC}$ (which is equivalent to $\text{DRP}_{\text{internal, GIC}}$; see Observation \ref{obs_RPIC_automaticInternal}).
\end{theorem}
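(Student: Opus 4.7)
The plan is to mirror the proof of Theorem \ref{thm_PFAplusNotGICimpliesDRPGIS}, shifting every step down one level in the chain \eqref{eq_Inclusions}: where the earlier argument used $H^V_{\omega_2} \notin \text{IC}$ to produce reflection to GIS sets, here $H^V_{\omega_2} \notin \text{IA}$ will produce reflection to GIC sets. To establish $\text{DRP}_{GIC}$, fix a regular $\theta \ge \omega_2$ and a sequence $\vec{S} = \langle S_x : x \in H_\theta \rangle$ of stationary subsets of $[H_\theta]^\omega$. I would define a poset $\mathbb{Q}$ whose conditions are countable $\in$-continuous chains $\langle N_\xi : \xi \le \alpha \rangle$ of countable elementary submodels of $H_{\theta^+}$ containing $\vec{S}$, together with the Viale-Weiss style guessing side conditions. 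A $\mathbb{Q}$-generic filter produces a continuous $\omega_1$-chain whose union $W^*$ is internally club in $V$---each proper initial segment is a countable object already in $V$---but is \emph{not} internally approachable in $V[G]$, because the enumerating chain of $W^*$ is not in $V$.

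Standard master-condition arguments (placing $M \cap H_{\theta^+}$ on top of a condition $p \in M$) yield properness of $\mathbb{Q}$ and simultaneously show $\mathbb{Q} \Vdash H^V_{\omega_2} \notin \text{IA}$, so $\mathbb{Q}$ falls within the class governed by $\text{PFA}^{+\omega_1}_{H^V_{\omega_2} \notin \text{IA}}$. I would then isolate $\omega_1$-many stationary subsets $T_x \subseteq \omega_1$ ($x \in H_\theta$) whose preservation by the generic filter ensures that $S_x \cap [W^*]^\omega$ is stationary in $[W^*]^\omega$, together with $\omega_1$-many dense sets forcing each prescribed parameter into $W^*$. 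Applying $\text{PFA}^{+\omega_1}_{H^V_{\omega_2} \notin \text{IA}}$ to this collection extracts, inside $V$, a guessing internally club set $W$ for which $S_x \cap [W]^\omega$ is stationary for every $x \in W$, which is exactly $\text{DRP}_{GIC}$.

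The main obstacle is the simultaneous verification of (i) properness of $\mathbb{Q}$, (ii) the forcing of $H^V_{\omega_2} \notin \text{IA}$, and (iii) the guessing-and-internally-club character of $W^*$. The non-IA clause is the genuinely new feature relative to Theorem \ref{thm_PFAplusNotGICimpliesDRPGIS}: one must argue that if $W^*$ were internally approachable in $V[G]$, then some enumeration of $W^*$ witnessing IA would yield a fresh countable sequence already coded in $V$, contradicting a density argument about how the generic chain is generated. Properness, the guessing side conditions, and the $+\omega_1$-extraction of simultaneous stationary reflection are direct adaptations of the proof of Theorem \ref{thm_PFAplusNotGICimpliesDRPGIS}, with the roles of IC and IS there played here by IA and IC respectively.
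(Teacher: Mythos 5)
Your high-level strategy (apply the forcing axiom to a proper poset that collapses $H_\theta$ onto an internally club guessing set, and use stationary-correctness of the partial generic to get the diagonal reflection) is the same as the paper's, but the poset you propose does not do the job, and the gap is fatal rather than cosmetic. A poset whose conditions are \emph{countable} continuous $\in$-chains of countable elementary submodels, ordered by end-extension, is $\sigma$-closed (close a descending $\omega$-sequence of conditions by putting the union model on top), so it adds no new $\omega$-sequences of ground-model sets. Consequently the induced filtration $\langle N_\xi \cap H^V_{\omega_2} : \xi < \omega_1\rangle$ of $H^V_{\omega_2}$ has all of its proper initial segments in $V$, hence in $H^V_{\omega_2}$, so your poset forces $H^V_{\omega_2} \in \text{IA}$ --- the exact opposite of what is required for it to lie in the class governed by $\text{PFA}^{+\omega_1}_{H^V_{\omega_2}\notin\text{IA}}$. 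Your stated mechanism for non-IA (``the enumerating chain of $W^*$ is not in $V$'') is not what IA means: internal approachability asks whether initial segments of \emph{some} filtration are \emph{elements of $W^*$}, and for a countable-condition poset they are. For the same reason such a poset cannot make $W^*$ a guessing model ($\sigma$-closed collapses fail the $\omega_1$-approximation property, and guessing models are never IA), so the ``guessing side conditions'' cannot be grafted onto countable conditions.

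The paper's proof avoids all of this by taking $\mathbb{Q}^{\text{proper}}_{T,\theta}$ with $T=\omega_1$ from Theorem \ref{thm_TwoPosets}: a Cohen real, followed by the \emph{finite}-condition decorated club-shooting $\mathbb{C}^{\text{fin}}_{\text{dec}}$ through the lift of $([H^V_\theta]^\omega)^V$, followed by the sealing poset $\dot{\mathbb{S}}(H^V_\theta)$. Strong properness on a stationary set gives the $\omega_1$-approximation property, hence $H^V_{\omega_2}$ is forced to be guessing, and ``guessing $\Rightarrow$ not IA'' puts the poset in the relevant class with no separate density argument; shooting the club through ground-model countable sets makes $H^V_\mu$ internally club; and the sealing step makes the guessing \emph{indestructible}, which is what lets one transfer guessing from the collapse $H_W[\bar g]$ back to $V$ when $g$ is only $(W,\mathbb{Q})$-generic rather than $V$-generic --- a transfer your sketch also needs but does not address. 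Finally, your bookkeeping ``$\omega_1$-many stationary subsets $T_x$ ($x\in H_\theta$)'' is inconsistent as written (there are $|H_\theta|$-many such $x$, and the axiom only tolerates $\omega_1$-many names); the paper handles this via Woodin's Lemma \ref{lem_WoodinKeyLemma} and Remark \ref{rem_WoodinVariant}, which give stationarily many $W$ admitting filters correct about \emph{all} stationary-set names lying in $W$.
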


\begin{theorem}\label{thm_Cox_PreservePFAnegIA}
There is a $< \!\! \omega_2$ strategically closed poset that preserves $\text{PFA}^{+\omega_1}_{H^V_{\omega_2} \notin \text{IA}}$ and forces $\neg \text{RP}_{\text{IA}}$. Moreover, if PFA held in the ground model, then PFA is also preserved.
\end{theorem}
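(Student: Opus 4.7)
The plan is to follow the template of the proof of Theorem \ref{thm_Cox_PreservePFAnegIC}, with the class IC replaced by IA throughout. First I would define a $<\!\omega_2$-strategically closed poset $\mathbb{P}$ whose generic object is a stationary set $T \subseteq \wp_\omega(H^V_{\omega_2})$ that fails to reflect to any $W \in \text{IA}$ of size $\omega_1$. A condition is a pair $(t,c)$ where $t$ is a countable partial approximation to $T$ and $c$ is a countable continuous $\subseteq$-chain of side-condition IA models, witnessing that $t$ already avoids a suitable club in $[N]^\omega$ for each $N$ in the range of $c$. Player~II's winning strategy for the $<\!\omega_2$-strategic closure game rests on the fact that countable unions of such side-condition chains remain IA---the point where the specific choice of IA rather than IC matters. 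A standard density argument using strategic closure then shows that the generic $T$ is stationary in $\wp_\omega(H^V_{\omega_2})$ and fails to reflect to any IA set of size $\omega_1$ in $V[G]$, yielding $\neg \text{RP}_{\text{IA}}$ there.

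The main work is preservation of $\text{PFA}^{+\omega_1}_{H^V_{\omega_2} \notin \text{IA}}$. Given a $\mathbb{P}$-name $\dot{\mathbb{Q}}$ for a proper poset forcing $H^{V^{\mathbb{P}}}_{\omega_2} \notin \text{IA}$, together with names for $\omega_1$ many dense sets and a stationary set (for the ``plus'' property), the plan is to apply $\text{PFA}^{+\omega_1}_{H^V_{\omega_2} \notin \text{IA}}$ in $V$ to the two-step iteration $\mathbb{P} * \dot{\mathbb{Q}}$. This iteration is proper in $V$ (since $\mathbb{P}$ is $<\!\omega_2$-strategically closed and hence proper, and two-step iteration preserves properness), and the Woodin-style pullback template developed earlier in the paper converts the resulting $V$-filter into a $V^{\mathbb{P}}$-filter for $\dot{\mathbb{Q}}$ with the desired properties. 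The second clause of the theorem---preservation of full PFA when PFA holds in $V$---follows from the same pullback applied to the unrestricted class of proper posets.

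The hard part is verifying that $\mathbb{P} * \dot{\mathbb{Q}}$ actually lies in the class used to state $\text{PFA}^{+\omega_1}_{H^V_{\omega_2} \notin \text{IA}}$, i.e., that it forces $H^V_{\omega_2} \notin \text{IA}$. If $\dot{\mathbb{Q}}$ preserves $\omega_2^V$ this is trivial because $H^V_{\omega_2}$ has size $\omega_2$ in the extension; in the remaining case, where $\dot{\mathbb{Q}}$ collapses $\omega_2^V$, the hypothesis on $\dot{\mathbb{Q}}$ does not directly transfer to $H^V_{\omega_2}$, and the cleanest approach is to show that a hypothetical IA filtration of $H^V_{\omega_2}$ in $V^{\mathbb{P} * \dot{\mathbb{Q}}}$, combined with a bijection $\omega_1 \to H^{V^{\mathbb{P}}}_{\omega_2}$ supplied by $\dot{\mathbb{Q}}$, could be threaded into an IA filtration of $H^{V^{\mathbb{P}}}_{\omega_2}$ itself, contradicting the hypothesis on $\dot{\mathbb{Q}}$.
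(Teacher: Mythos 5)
Your overall architecture (a distributivity-preserving poset killing $\text{RP}_{\text{IA}}$, plus a Woodin-style pullback for the forcing axiom) matches the paper's, but both halves of your proposal have genuine gaps. First, the poset. The paper does not build a side-condition forcing on $[H_{\omega_2}]^\omega$; it uses Krueger's ordinal-based poset $\mathbb{P}_{\text{nrIA}}$, whose conditions are functions $f:\alpha\cap\text{cof}(\omega)\to 2$ ($\alpha<\omega_2$) such that $f^{-1}(1)\cap\gamma$ is nonstationary for every $\gamma\in\text{IA}^*:=\{\gamma\in S^2_1 : \Phi[\gamma]\in\text{IA}\}$, and the non-reflecting set in $[\omega_2]^\omega$ is obtained by lifting the generic via $z\mapsto\sup(z)$. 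Your poset has a fatal defect exactly at the step you flag as the crux: a \emph{continuous} chain of IA models cannot exist past an $\omega$-limit, and ``countable unions of such side-condition chains remain IA'' is false. An IA set $N$ of size $\omega_1$ with $\omega_1\subset N$ satisfies $\text{cf}(\sup(N\cap\omega_2))=\omega_1$ (this is the class $\text{Unif}_{\omega_1}$ from the introduction), whereas the union of a strictly increasing $\omega$-chain of such models has $\sup$ of cofinality $\omega$ at $\omega_2$ and so is \emph{not} IA. So the place ``where the specific choice of IA rather than IC matters'' is precisely where your closure argument breaks. (There is also a cardinality mismatch: with countable first coordinates, plays of length $\omega_1$ have no lower bound, so you would not get $<\!\omega_2$-strategic closure in any case.)

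Second, the preservation argument. The step you single out as ``the hard part''---that $\mathbb{P}*\dot{\mathbb{Q}}$ forces $H^V_{\omega_2}\notin\text{IA}$---is actually immediate: $<\!\omega_2$-strategic closure gives $<\!\omega_2$-distributivity, hence $H^V_{\omega_2}=H^{V^{\mathbb{P}}}_{\omega_2}$, and the hypothesis on $\dot{\mathbb{Q}}$ transfers verbatim; no threading of filtrations is needed. What is missing entirely is the actual core of the pullback, namely clause \ref{item_ClauseWhenever} of Theorem \ref{thm_PreservationFA}: one must exhibit, in the generic ultrapower $N$, a lower bound for $j[G]$ in $j(\mathbb{P})$. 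For $\mathbb{P}_{\text{nrIA}}$ this works because $j[G]=G$ is itself a condition in $j(\mathbb{P}_{\text{nrIA}})$: the only new constraint is at $\gamma=\omega_2^V$, and the paper argues $\omega_2^V\notin j(\text{IA}^*)$ because $H^V_{\omega_2}=j(\Phi)[\omega_2^V]$ is not IA in $N$ (the paper secures the upward absoluteness of ``$\notin\text{IA}$'' to $N$ via indestructible guessing, since guessing models are never IA). For your poset, $\bigcup j[G]$ has first coordinate of size $\omega_1^N$ and is not even a condition as you have defined them, and no master-condition argument is offered. Without this step the forcing axiom in $V^{\mathbb{P}}$ does not follow.
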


\begin{corollary}\label{cor_DRPGIC_notImply_RPIA}
$\text{DRP}_{\text{GIC}}$ does not imply $\text{RP}_{\text{IA}}$.  In particular, the implication $\text{RP}_{\text{IA}} \implies \text{RP}_{\text{IC}}$ from \eqref{eq_RP_implic} cannot be reversed.
\end{corollary}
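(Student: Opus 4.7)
The corollary is an immediate consequence of combining the two preceding theorems, so my plan is simply to chain them together. Start from any ground model $V$ satisfying $\text{PFA}^{+\omega_1}_{H^V_{\omega_2} \notin \text{IA}}$; a model of $\text{PFA}^{+\omega_1}$ clearly suffices. Apply Theorem \ref{thm_Cox_PreservePFAnegIA} to obtain a $<\!\omega_2$-strategically closed poset $\mathbb{P}$ so that for any $V$-generic $G \subseteq \mathbb{P}$, the extension $V[G]$ satisfies both $\text{PFA}^{+\omega_1}_{H^V_{\omega_2} \notin \text{IA}}$ and $\neg \text{RP}_{\text{IA}}$. Then, working in $V[G]$, apply Theorem \ref{thm_PFAplusNotGIAimpliesDRPGIC} to conclude that $\text{DRP}_{\text{GIC}}$ holds. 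Thus $V[G]$ witnesses that $\text{DRP}_{\text{GIC}}$ does not imply $\text{RP}_{\text{IA}}$.

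For the second assertion, I would note that $\text{DRP}_{\text{GIC}}$ trivially implies $\text{RP}_{\text{IC}}$: diagonal simultaneous reflection subsumes single reflection, and any GIC witness is automatically an IC witness since $\text{GIC} \subseteq \text{IC}$. Hence $V[G]$ satisfies $\text{RP}_{\text{IC}} \wedge \neg \text{RP}_{\text{IA}}$, so the implication $\text{RP}_{\text{IA}} \implies \text{RP}_{\text{IC}}$ from \eqref{eq_RP_implic} cannot be reversed.

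At the level of this corollary there is essentially no obstacle; the entire content is packaged into Theorems \ref{thm_PFAplusNotGIAimpliesDRPGIC} and \ref{thm_Cox_PreservePFAnegIA}. The real work, of course, lies in Theorem \ref{thm_Cox_PreservePFAnegIA}, namely designing a $<\!\omega_2$-strategically closed poset that destroys $\text{RP}_{\text{IA}}$ while leaving the targeted fragment $\text{PFA}^{+\omega_1}_{H^V_{\omega_2} \notin \text{IA}}$ of the forcing axiom intact, and in Theorem \ref{thm_PFAplusNotGIAimpliesDRPGIC}, which shows that this seemingly weak fragment already suffices to force the full strength of $\text{DRP}_{\text{GIC}}$; but those are handled elsewhere in the paper.
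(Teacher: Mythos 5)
Your proposal is correct and is exactly the intended argument: the paper states the corollary without proof immediately after Theorems \ref{thm_PFAplusNotGIAimpliesDRPGIC} and \ref{thm_Cox_PreservePFAnegIA}, and the witnessing model is precisely the extension you describe (noting, as you implicitly do, that the $<\!\omega_2$-distributivity of the poset keeps $H_{\omega_2}$ unchanged so the preserved fragment applies in the extension). Your observation that $\text{DRP}_{\text{GIC}}$ implies $\text{RP}_{\text{IC}}$, giving the second assertion, is also the standard reading of the diagonal reflection principle used throughout the paper.
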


Figure \ref{fig_RP} summarizes the implications and non-implications.  It shows that for the classes IA, IC, and IS, the maximal form of simultaneous reflection (i.e.\ DRP) for one class does not even imply $\text{RP}^1$ for the next ``nicer" class.  This contrasts greatly with the Foreman-Todorcevic result (Corollary 20 of \cite{MR2115072}), that $\text{RP}^3_{\wp^*_{\omega_2}(V)}$ implies $\text{RP}^1_{\text{Unif}_{\omega_1}}$, where $\wp^*_{\omega_2}(V):= \{ W \ : \ |W|=\omega_1 \subset W \}$ and 
\[
\text{Unif}_{\omega_1} :=  \{ W \in \wp^*(V) \ : \ \text{cf}(\text{sup}(W \cap \kappa)) = \omega_1 \text{ for every regular uncountable } \kappa \in W  \}.
\]

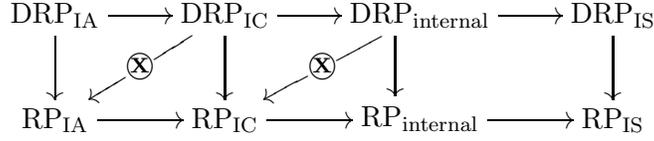
\begin{figure}[!h]
\caption{An arrow indicates an implication, and an arrow with an X indicates a non-implication.  In order to simplify the figure, the non-implications shown do not incorporate the full strength of the theorems above.}
\label{fig_RP}
\begin{displaymath}
\xymatrix{
\text{DRP}_{\text{IA}} \ar@{>}[r] \ar@{>}[d] &  \text{DRP}_{\text{IC}} \ar@{>}[r] \ar@{>}[d]  \ar[dl]|{\textcircled{\scriptsize \textbf{X}}}
 & \text{DRP}_{\text{internal}} \ar[r] \ar@{>}!<-2ex,0ex>;[d]!<-2ex,2ex>    \ar[dl]|{\textcircled{\scriptsize \textbf{X}}}
 & \text{DRP}_{\text{IS}} \ar@{>}[d] \\
\text{RP}_{\text{IA}} \ar@{>}[r] &  \text{RP}_{\text{IC}} \ar@{>}[r] & \txt{$\text{RP}_{\text{internal}}$}  \ar[r]  & \text{RP}_{\text{IS}} 
}
\end{displaymath}
\end{figure}

We now address the implication $\text{RP}_{\text{IS}} \implies \text{RP}_{\text{IU}}$ from \eqref{eq_RP_implic}.  Whether this is reversible is closely related to Question 5.12 of Krueger~\cite{MR2674000}, and to the question of whether the implication $\text{RP}_{\text{internal}} \implies \text{RP}_{\text{IS}}$ from Theorem \ref{thm_FuchinoUsubaImplications} can be reversed.  We do not know the answer to any of those questions, but the following are some partial results that may shed light on this surprisingly difficult problem.  

In light of the way that we separated the various reflection principles above (Theorem \ref{thm_PFAplusNotGICimpliesDRPGIS} through Corollary \ref{cor_DRPGIC_notImply_RPIA}), it is natural to conjecture that $\text{MM}^{+\omega_1}_{H^V_{\omega_2} \notin \text{IS}}$ should imply $\text{DRP}_{\text{IU}}$, and that $\text{MM}^{+\omega_1}_{H^V_{\omega_2} \notin \text{IS}}$ should be preserved by a forcing that kills $\text{RP}_{\text{IS}}$.  This would have separated $\text{RP}_{\text{IS}}$ from $\text{RP}_{\text{IU}}$ in a manner similar to the earlier separation results.  However, $\text{MM}^{+\omega_1}_{H^V_{\omega_2} \notin \text{IS}}$ does \emph{not} imply $\text{DRP}_{\text{IU}}$, and in fact does not even imply the weakest generalized reflection principle of all, as the next theorem shows.  The notation $\text{WRP}$ is conventionally used to denote  $\text{RP}_\Gamma$, where $\Gamma$ is the entire class $\{W \ : \ |W|=\omega_1 \subset W \}$.

\begin{theorem}\label{thm_MM_notIS_notImplyRPIU}
$\text{MM}^{+\omega_1}_{H^V_{\omega_2} \notin \text{IS}}$ is consistent with failure of $\text{WRP}(\omega_2)$.
\end{theorem}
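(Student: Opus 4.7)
The plan is to start with a model $V \models \text{MM}^{+\omega_1}$ and force with a $<\!\omega_2$-strategically closed poset $\mathbb{P}$ that kills $\text{WRP}(\omega_2)$, then argue that $\text{MM}^{+\omega_1}_{H^V_{\omega_2} \notin \text{IS}}$ survives in $V[G]$. This runs parallel to the preservation arguments behind Theorems \ref{thm_Cox_PreservePFAnegIC} and \ref{thm_Cox_PreservePFAnegIA}, but with $\mathbb{P}$ chosen to destroy reflection altogether rather than merely reflection to the IC or IA classes.

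For $\mathbb{P}$ I would take the canonical $\sigma$-closed forcing with countable conditions that adjoins a non-reflecting stationary $S \subseteq \omega_2 \cap \text{cof}(\omega)$. By Feng--Magidor, $\text{WRP}(\omega_2)$ is equivalent to reflection at a point of cofinality $\omega_1$ of every stationary subset of $\omega_2 \cap \text{cof}(\omega)$, so $V[G] \models \neg \text{WRP}(\omega_2)$.

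For preservation, let $\mathbb{Q} \in V[G]$ preserve stationary subsets of $\omega_1$ and force $H^{V[G]}_{\omega_2} \notin \text{IS}$, with names $\langle \dot T_\alpha : \alpha < \omega_1 \rangle$ for stationary subsets of $\omega_1$. The key claim is that $\mathbb{P} * \dot{\mathbb{Q}} \in V$ belongs to the corresponding ground-model class. Stationary preservation follows immediately from $\sigma$-closure of $\mathbb{P}$ combined with the hypothesis on $\mathbb{Q}$. For the requirement that $\mathbb{P}*\dot{\mathbb{Q}}$ forces $H^V_{\omega_2} \notin \text{IS}$, it suffices to prove, in any $(\mathbb{P}*\dot{\mathbb{Q}})$-extension $V[G][H]$, the equivalence $H^V_{\omega_2} \in \text{IS}$ iff $H^{V[G]}_{\omega_2} \in \text{IS}$. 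This is witnessed by the projection $\pi(x) := x \cap H^V_{\omega_2}$ from $[H^{V[G]}_{\omega_2}]^\omega$ to $[H^V_{\omega_2}]^\omega$, together with the dual lift $C \mapsto C^{\uparrow} := \{y \in [H^{V[G]}_{\omega_2}]^\omega : y \cap H^V_{\omega_2} \in C\}$. Both maps send clubs to clubs, and $\sigma$-closure of $\mathbb{P}$ identifies $V$-countable subsets of $H^V_{\omega_2}$ with $V[G]$-countable ones. Granted the key claim, one applies the ground-model axiom $\text{MM}^{+\omega_1}_{H^V_{\omega_2} \notin \text{IS}}$ (which holds since $V \models \text{MM}^{+\omega_1}$) to $\mathbb{P}*\dot{\mathbb{Q}}$ and suitably lifted $(\mathbb{P}*\dot{\mathbb{Q}})$-names, yielding the required $V[G]$-filter on $\mathbb{Q}$.

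The main obstacle is the standard technical issue in such lift-and-apply preservation arguments: converting a $V$-filter on $\mathbb{P}*\dot{\mathbb{Q}}$, as produced by the ground-model axiom, into a genuine $V[G]$-filter on $\mathbb{Q}$ that meets the given dense sets and correctly interprets the $\dot T_\alpha$'s as stationary sets in $V[G]$. As in the proofs of Theorems \ref{thm_Cox_PreservePFAnegIC} and \ref{thm_Cox_PreservePFAnegIA}, this is handled by working with suitable elementary submodels so that the $\mathbb{P}$-component of the produced filter is compatible with $G$. The essential combinatorial input---the IS-equivalence above---is what makes the lift-and-apply strategy go through in the present setting.
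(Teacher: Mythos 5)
There is a genuine gap, and it is fatal to your choice of $\mathbb{P}$. The axiom $\text{MM}^{+\omega_1}_{H^V_{\omega_2} \notin \text{IS}}$ actually \emph{implies} that every stationary subset of $\omega_2 \cap \text{cof}(\omega)$ reflects to a point of cofinality $\omega_1$: this is the content of Theorem \ref{thm_MMplusnotISimpliesORD_DRP} (via $\text{DRP}^{\text{cof}(\omega)}_{\text{GIU}}$, using the poset $\mathbb{Q}^{\omega_1\text{-SSP}}_{\emptyset,\theta}$ of Theorem \ref{thm_TwoPosets}, which forces $H^V_{\omega_2} \notin \text{IS}$ while \emph{preserving} all ground-model stationary subsets of $\theta \cap \text{cof}(\omega)$). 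So no model of $\text{MM}^{+\omega_1}_{H^V_{\omega_2} \notin \text{IS}}$ can contain a non-reflecting stationary subset of $S^2_0$, and your proposed preservation argument cannot succeed. Concretely, the failure occurs exactly at the step you defer as ``the main obstacle'': in the generic-embedding formulation (Theorem \ref{thm_PreservationFA}) you would need $j[G]=G$ to have a lower bound in $j(\mathbb{P})$, i.e.\ you would need the generic stationary $S \subseteq S^2_0$ to be nonstationary (indeed non-reflecting up to and including $\omega_2^V$) in $N$. The only leverage the class gives you is that $\dot{\mathbb{Q}}$ forces $H^V_{\omega_2} \notin \text{IS}$, which concerns the stationarity of the internal part (a subset of $\omega_1$, equivalently of $[\omega_2]^\omega$), and says nothing about stationary subsets of $\omega_2 \cap \text{cof}(\omega)$; as noted, such $\dot{\mathbb{Q}}$ can preserve all of them. (Two smaller points: WRP$(\omega_2)$ is not \emph{equivalent} to ordinal reflection at $S^2_0$ --- only the implication you need holds --- and your projection/lift argument for the ``IS-equivalence'' is unnecessary, since $<\omega_2$-distributivity of $\mathbb{P}$ gives $H^V_{\omega_2}=H^{V[G]}_{\omega_2}$ literally.)

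The paper's proof must therefore, and does, kill WRP$(\omega_2)$ without killing ordinal reflection: it uses the Asper\'o--Krueger--Yoshinobu poset $\mathbb{P}([\omega_2]^\omega)$, whose conditions are $\omega_1$-sized subsets $p$ of $[\omega_2]^\omega$ with $p \cap [W]^\omega$ nonstationary for every $W$ of size $\omega_1$; the generic adds a non-reflecting stationary subset $G$ of $[\omega_2]^\omega$ consisting of sets in $V$. The requirement that $\dot{\mathbb{Q}}$ force $H^V_{\omega_2} \notin \text{IS}$ is then exactly what is needed: it makes $V \cap [\omega_2^V]^\omega$, hence $G$, nonstationary in $N$, so that $j[G]=G$ is a condition in $j(\mathbb{P})$ below everything in $j[G]$, and Theorem \ref{thm_PreservationFA} applies with $\dot{\mathbb{R}}$ trivial. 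If you want to salvage your strategy, you must replace your $\mathbb{P}$ with a poset whose generic ``non-reflecting object'' is guaranteed to be trivialized by every poset in the class $\Gamma$; a non-reflecting stationary subset of $S^2_0$ is not such an object.
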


Recall that $\text{MM}^{+\omega_1}_{H^V_{\omega_2} \notin \text{IS}}$ is (the $+\omega_1$ version of) the forcing axiom for the class of posets that preserve stationary subsets of $\omega_1$ and force $H^V_{\omega_2} \notin \text{IS}$.  A familiar poset in this class is Namba forcing.  Martin's Maximum implies that Namba forcing is semiproper, which in turn implies $\text{WRP}(\omega_2)$.\footnote{By Shelah~\cite{MR1623206} and Todorcevic~\cite{MR1261218}.}  Also, by Sakai~\cite{MR2387938}, $\text{WRP}(\omega_2)$ is equivalent to the \emph{semi}-stationary reflection principle for $\omega_2$, denoted $\text{SSR}(\omega_2)$.  The following corollary of Theorem \ref{thm_MM_notIS_notImplyRPIU} may be of independent interest:
\begin{corollary}
The axiom $\text{FA}^{+\omega_1}(\text{Namba forcing})$ does not imply $\text{SSR}(\omega_2)$.
\end{corollary}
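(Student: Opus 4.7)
The plan is to deduce the corollary directly from Theorem \ref{thm_MM_notIS_notImplyRPIU} by observing that Namba forcing is a member of the class $\Gamma$ that defines $\text{MM}^{+\omega_1}_{H^V_{\omega_2} \notin \text{IS}}$. Let $V$ be a model of $\text{MM}^{+\omega_1}_{H^V_{\omega_2} \notin \text{IS}} + \neg \text{WRP}(\omega_2)$, as supplied by Theorem \ref{thm_MM_notIS_notImplyRPIU}, and let $\mathbb{N}$ denote Namba forcing on $\omega_2^V$. Once I verify that $\mathbb{N} \in \Gamma$ in $V$, the axiom $\text{FA}^{+\omega_1}(\mathbb{N})$ holds in $V$ as a special case of $\text{MM}^{+\omega_1}_{H^V_{\omega_2} \notin \text{IS}}$, while Sakai's equivalence $\text{WRP}(\omega_2) \iff \text{SSR}(\omega_2)$ gives $\neg \text{SSR}(\omega_2)$ in $V$, completing the proof.

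The verification that $\mathbb{N} \in \Gamma$ in $V$ has two parts. For the stationary-preservation clause, I would invoke the classical fact (going back to Shelah) that under CH Namba forcing preserves stationary subsets of $\omega_1$, which applies in the model $V$ produced by the construction behind Theorem \ref{thm_MM_notIS_notImplyRPIU}. For the clause that $\mathbb{N}$ forces $H^V_{\omega_2} \notin \text{IS}$, let $\sigma \in V[G]$ be the cofinal $\omega$-sequence in $\omega_2^V$ added by $\mathbb{N}$, and set $D := \{ x \in [H^V_{\omega_2}]^\omega : \text{ran}(\sigma) \subseteq x \}$. Then $D$ is club in $([H^V_{\omega_2}]^\omega)^{V[G]}$. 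Since $\text{cf}^V(\omega_2) = \omega_2 > \omega$, every countable subset of $H^V_{\omega_2}$ lying in $V$ is bounded below $\omega_2^V$, hence cannot contain $\text{ran}(\sigma)$. Therefore $D$ is disjoint from $[H^V_{\omega_2}]^\omega \cap H^V_{\omega_2} = [H^V_{\omega_2}]^\omega \cap V$, so $H^V_{\omega_2}$ is not internally stationary in $V[G]$. (In the degenerate subcase that $|H^V_{\omega_2}|^{V[G]} \neq \omega_1$, the conclusion is immediate from the convention that $\text{IS}$ consists only of sets of size $\omega_1$.)

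The main subtlety I anticipate is the stationary-preservation clause, since Namba forcing need not preserve stationary subsets of $\omega_1$ in arbitrary models. I expect this to be unproblematic in the particular model $V$ arising from Theorem \ref{thm_MM_notIS_notImplyRPIU}, because that theorem presumably proceeds by forcing over a model of $\text{MM}^{+\omega_1}$ (where Namba forcing is outright semiproper) with a poset designed to preserve $\text{MM}^{+\omega_1}_{H^V_{\omega_2} \notin \text{IS}}$, so membership of $\mathbb{N}$ in $\Gamma$ is part of what is preserved by the construction. Worst case, one can always build the witnessing model so that it additionally satisfies CH at the appropriate stage, which is sufficient for the classical Namba preservation theorem.
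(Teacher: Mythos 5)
Your overall route is the same as the paper's: deduce the corollary from Theorem \ref{thm_MM_notIS_notImplyRPIU} by checking that Namba forcing belongs to the class $\Gamma$ defining $\text{MM}^{+\omega_1}_{H^V_{\omega_2} \notin \text{IS}}$, and then apply Sakai's equivalence of $\text{WRP}(\omega_2)$ with $\text{SSR}(\omega_2)$. Your verification of the clause that Namba forcing forces $H^V_{\omega_2} \notin \text{IS}$ is correct, and is a detail the paper leaves implicit: the generic cofinal $\omega$-sequence in $\omega_2^V$ yields a club of countable sets, none of which can lie in $V$, let alone be an element of $H^V_{\omega_2}$.

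The genuine gap is in the stationary-preservation clause, and both of your proposed fixes are unavailable. The classical preservation theorem you invoke is a CH result, but CH fails in \emph{every} model of $\text{MM}^{+\omega_1}_{H^V_{\omega_2} \notin \text{IS}}$: the class $\Gamma$ contains the posets $\mathbb{Q}^{\omega_1\text{-SSP}}_{T,\theta}$ of Theorem \ref{thm_TwoPosets}, whose first iterand is $\text{Add}(\omega)$, and a forcing axiom for a two-step iteration yields the forcing axiom for its first factor, so the axiom already implies $\text{MA}_{\omega_1}(\text{Add}(\omega))$ and hence $2^{\omega} > \omega_1$. For the same reason your ``worst case'' fallback of arranging CH in the witnessing model is impossible. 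Nor can you route through semiproperness: the paper itself notes that semiproperness of Namba forcing implies $\text{WRP}(\omega_2)$, which fails in the witnessing model, so Namba forcing is \emph{not} semiproper there. Finally, your suggestion that membership of Namba forcing in $\Gamma$ is ``part of what is preserved by the construction'' is not something the proof of Theorem \ref{thm_MM_notIS_notImplyRPIU} establishes; that argument preserves the forcing axiom for the class, not the membership of any particular poset in the class. What is actually needed is an argument that Namba forcing preserves stationary subsets of $\omega_1$ in the particular $\neg\text{CH}$ model at hand (note that if it did not, then $\text{FA}(\text{Namba})$ would outright fail there by Foreman--Magidor--Shelah, so this clause cannot be waved away). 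In fairness, the paper supplies no argument either --- it simply asserts that Namba forcing is a member of the class --- but the specific justification you offer is one that demonstrably cannot work in the required model.
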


Although $\text{MM}^{+\omega_1}_{H^V_{\omega_2} \notin \text{IS}}$ does not imply even $\text{WRP}(\omega_2)$, it does imply a diagonal kind of ordinal reflection.  The principle $\text{DRP}^{\text{cof}(\omega)}_{\text{GIU}}$ is a weakening of $\text{DRP}_{\text{GIU}}$ where one only asks for diagonal reflection of stationary subsets of $\theta \cap \text{cof}(\omega)$, rather than stationary subsets of $[\theta]^\omega$.  We prove:
\begin{theorem}\label{thm_MMplusnotISimpliesORD_DRP}
$\text{MM}^{+\omega_1}_{H^V_{\omega_2} \notin \text{IS}}$ implies $\text{DRP}^{\text{cof}(\omega)}_{\text{GIU}}$.
\end{theorem}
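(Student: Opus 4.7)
Fix a regular cardinal $\theta \ge \omega_2$ and a sequence $\vec{T} = \langle T_a : a \in H_\theta \rangle$ of stationary subsets of $\theta \cap \text{cof}(\omega)$. The plan is to produce stationarily many $W \in \text{GIU} \cap [H_\theta]^{\omega_1}$ with $\text{cf}(\sup(W \cap \theta)) = \omega_1$ and $T_a \cap \sup(W \cap \theta)$ stationary in $\sup(W \cap \theta)$ for every $a \in W$, by applying $\text{MM}^{+\omega_1}_{H^V_{\omega_2} \notin \text{IS}}$ to a tailored poset $P = P(\theta, \vec{T})$ and recovering such a $W$ from the resulting filter.

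The poset $P$ generically builds a continuous $\in$-increasing $\omega_1$-chain $\langle M_\xi : \xi < \omega_1 \rangle$ of countable elementary submodels of $H_\theta$, whose union will become the sought $W$. Conditions are countable initial segments of such a chain, coupled with a Namba-type side condition on $\omega_2$ so that $P$ forces $H^V_{\omega_2} \notin \text{IS}$ (putting $P$ in the class to which the axiom applies). The reflection requirements are \emph{not} imposed as a hard constraint on conditions---doing so would be self-referential, since for any finite $a_0,\dots,a_n$ the intersection $\bigcap_i T_{a_i}$ need not be stationary---but are instead enforced through the $+\omega_1$ component of the axiom: let $\dot a_\eta$ name the $\eta$-th element enumerated into the generic $W$, and set
\[
\dot S_\eta := \{\xi < \omega_1 : \sup(\dot M_\xi \cap \theta) \in T_{\dot a_\eta}\}.
\]
The axiom is invoked with the $\omega_1$ dense sets $D_\xi := \{p : \beta_p \ge \xi\}$ (forcing the chain to reach length $\omega_1$) and the $\omega_1$ names $\dot S_\eta$.

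The main technical content is to verify that $P$ preserves stationary subsets of $\omega_1$ and that each $\dot S_\eta$ is forced to be stationary in $\omega_1$. The forcing of $H^V_{\omega_2} \notin \text{IS}$ is immediate from the Namba side condition. The remaining properties are proved by a joint semiproper--fusion argument: given a countable $N \prec H_\lambda$ containing $P$, $\vec T$, a club name $\dot C$, and a name $\dot S_\eta$ with $a_\eta \in N$, construct a decreasing $\omega$-sequence of conditions below a given $p \in N$ hitting cofinally many dense sets in $N$, steering the Namba branch through ordinals of $N \cap \omega_2$ and the tower-sup trajectory toward $T_{a_\eta} \cap N$ using stationarity of $T_{a_\eta}$ and elementarity of $N$. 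The limit condition forces $N \cap \omega_1 \in \dot C \cap \dot S_\eta$, yielding both stationary preservation and the stationarity of each $\dot S_\eta$.

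Applying the axiom yields a $V$-filter $F$ meeting every $D_\xi$ and realizing each $\dot S_\eta$ as a stationary subset of $\omega_1$. Setting $W := \bigcup_{\xi < \omega_1} M_\xi^F$, we have $|W| = \omega_1$, $W$ is internally unbounded (from the continuous chain), and $\sup(W \cap \theta)$ has cofinality $\omega_1$; and for each $a = a_\eta \in W$, the realized $\dot S_\eta^F$ is stationary in $\omega_1$, whose image under $\xi \mapsto \sup(M_\xi^F \cap \theta)$ is a stationary subset of $T_a \cap \sup(W \cap \theta)$ in $\sup(W \cap \theta)$. The guessing property of $W$ follows from additional density requirements paralleling the proof of Theorem \ref{thm_PFAplusNotGICimpliesDRPGIS}. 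The chief obstacle is the semiproper--fusion construction for $P$, which must jointly secure stationary preservation, the stationarity of all $\dot S_\eta$, and the correct Namba behavior while dovetailing the $\omega_1$ many diagonal constraints; this adapts Krueger's MM-era analyses from \cite{MR2674000} to the ordinal-reflection setting.
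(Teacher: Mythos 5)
Your high-level strategy---apply the axiom, including its $+\omega_1$ component, to a poset that filters $H_\theta$ into a continuous $\omega_1$-chain of countable models while keeping $H^V_{\omega_2}$ out of $\text{IS}$, and then use stationary-correctness of the names $\{\xi : \sup(M_\xi\cap\theta)\in T_a\}$ to pull reflection down to $\sup(W\cap\theta)$---is the same as the paper's. But there is a genuine gap at what you yourself call the chief obstacle. You propose to prove that your Namba-hybrid poset preserves stationary subsets of $\omega_1$ (and that each $\dot S_\eta$ is forced stationary) by a ``joint semiproper--fusion argument'' producing conditions forcing $N\cap\omega_1\in\dot C$. Under the hypothesis $\text{MM}^{+\omega_1}_{H^V_{\omega_2}\notin\text{IS}}$ this tool is not available: semiproperness of Namba forcing is equivalent to $\text{SSR}(\omega_2)$, hence to $\text{WRP}(\omega_2)$ by Sakai's theorem, and Theorem \ref{thm_MM_notIS_notImplyRPIU} of this paper shows that $\text{MM}^{+\omega_1}_{H^V_{\omega_2}\notin\text{IS}}$ is consistent with the failure of $\text{WRP}(\omega_2)$. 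So one cannot in general find semigeneric conditions for the Namba component, and the fusion breaks down exactly where it must catch $N\cap\omega_1$ in the club. The paper avoids Namba forcing entirely: it takes $\mathbb{Q}^{\omega_1\text{-SSP}}_{\emptyset,\theta}$ from Theorem \ref{thm_TwoPosets}, i.e.\ $\text{Add}(\omega)*\dot{\mathbb{C}}^{\text{fin}}_{\text{dec}}(\dot{X}^\theta_\emptyset)*\dot{\mathbb{S}}(H^V_\theta)$, where after adding a Cohen real the Gitik--Velickovic theorem makes $[\omega_2]^\omega\setminus V$ projective stationary and the finite-condition club-shooting threads the continuous chain entirely through sets that are new on $\omega_2$. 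This forces $\text{int}(H^V_{\omega_2})=\emptyset$ (hence $H^V_{\omega_2}\notin\text{IS}$ and, via the approximation property, $H^V_{\omega_2}\in\text{GIU}$), while $\omega_1$-SSP-ness and preservation of stationary subsets of $\theta\cap\text{cof}(\omega)$ come from strong properness on a stationary set (Fact \ref{fact_DecoratedPosetFacts}) together with Theorem \ref{thm_Velick}---no semiproperness is needed anywhere.

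Two further problems. First, the guessing property of $W$ cannot be secured by ``additional density requirements'': the $\omega_1$-approximation property of the transitive collapse of $W$ quantifies over arbitrary sets in $V$, not over $\omega_1$ many dense sets. The paper obtains it by applying Woodin's Lemma \ref{lem_WoodinKeyLemma} to get $W\prec H_{(2^\theta)^+}$ admitting a stationary-correct $(W,\mathbb{Q})$-generic filter, collapsing $W$, running the generic over the collapse $H_W$, and invoking the sealing poset $\dot{\mathbb{S}}(H^V_\theta)$ plus upward absoluteness of indestructible guessing (Corollary \ref{cor_ControlInternal}); your $W$ is instead the union of the generic chain recovered from a partial filter, for which no such argument is available. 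Second, $\text{DRP}^{\text{cof}(\omega)}_{\text{GIU}}$ demands stationarily many such $W$ in $\wp^*_{\omega_2}(H_{(2^\theta)^+})$, which Woodin's Lemma delivers directly but a single application of the forcing axiom does not.
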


\begin{corollary}
$\text{DRP}^{\text{cof}(\omega)}_{\text{GIU}}$ does not imply $\text{WRP}(\omega_2)$.
\end{corollary}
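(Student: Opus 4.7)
The corollary is an immediate consequence of the two theorems stated immediately before it. My plan is to simply chain them together.

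Let $M$ be the model produced by Theorem \ref{thm_MM_notIS_notImplyRPIU}, so that $M \models \text{MM}^{+\omega_1}_{H^V_{\omega_2} \notin \text{IS}} + \neg \text{WRP}(\omega_2)$. Applying Theorem \ref{thm_MMplusnotISimpliesORD_DRP} inside $M$, we conclude that $M \models \text{DRP}^{\text{cof}(\omega)}_{\text{GIU}}$. Hence $M$ witnesses that $\text{DRP}^{\text{cof}(\omega)}_{\text{GIU}}$ does not imply $\text{WRP}(\omega_2)$.

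There is no real obstacle here; the substantive work is all in the two preceding theorems. The only thing worth flagging is that Theorem \ref{thm_MMplusnotISimpliesORD_DRP} is a ZFC-provable implication, so it transfers to any model of its hypothesis, and in particular to the model produced by Theorem \ref{thm_MM_notIS_notImplyRPIU}; so the application is legitimate without any further verification.
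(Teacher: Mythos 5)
Your proposal is correct and is exactly the intended argument: the paper states this corollary without proof precisely because it follows immediately by applying Theorem \ref{thm_MMplusnotISimpliesORD_DRP} (a ZFC implication) inside the model of Theorem \ref{thm_MM_notIS_notImplyRPIU}. Nothing further is needed.
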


To aid in the proofs of most of the theorems above, we adapt some ideas of Woodin to prove a very general theorem (Theorem \ref{thm_PreservationFA}) about preservation of forcing axioms.  Theorem \ref{thm_PreservationFA} makes such preservation arguments more closely resemble arguments about lifting large cardinal embeddings.  Theorem \ref{thm_PreservationFA} consolidates a variety of results in the literature about forcing axiom preservation into a single framework (see examples in Section \ref{sec_PreservViaEmbed}) and may be of use in other applications.

\begin{theorem}[Forcing Axiom Preservation Theorem]\label{thm_PreservationFA}
Assume $\Gamma$ is a class of posets that is closed under restrictions (see Definition \ref{def_Nice}), $\mathbb{P}$ is a partial order, and $\dot{\Delta}$ is a $\mathbb{P}$-name for a (definable) class of posets that is also closed under restrictions.  Assume $\mu \le \omega_1$ is a cardinal (possibly $\mu = 0$), and that $\text{FA}^{+\mu}(\Gamma)$ holds.  Assume also that for every $\mathbb{P}$-name for a poset $\dot{\mathbb{Q}} \in \dot{\Delta}$ and every $\mathbb{P}*\dot{\mathbb{Q}}$-name $\langle \dot{S}_i \ : \ i < \mu \rangle$ for a sequence of $\mu$ many stationary subsets of $\omega_1$, there exists a $\mathbb{P}*\dot{\mathbb{Q}}$-name $\dot{\mathbb{R}}$ (possibly depending on $\dot{\mathbb{Q}}$ and $\dot{\vec{S}}$) such that:

\begin{enumerate}
 \item\label{item_3stepinGamma} $\mathbb{P}*\dot{\mathbb{Q}}*\dot{\mathbb{R}} \in \Gamma$
 \item\label{item_R_PreserveStat} $\boldsymbol{\mathbb{P}*\dot{\mathbb{Q}}}$ forces that $\boldsymbol{\dot{\mathbb{R}}}$ preserves the stationarity of $\dot{S}_i$ for every $i < \mu$; 
 \item\label{item_ClauseWhenever} \textbf{If} $j: V \to N$ is a generic elementary embedding, $\theta \ge |\mathbb{P}*\dot{\mathbb{Q}}*\dot{\mathbb{R}}|^+$ is regular in $V$, and 
 \begin{enumerate}
  \item $H^V_\theta$ is in the wellfounded part of $N$ (which we assume has been transitivized);
  \item $j[H^V_\theta] \in N$ and has size $\omega_1$ in $N$;
  \item $\text{crit}(j) = \omega_2^V$; and
  \item There exists a $G*H*K$ in $N$ such that:
  \begin{enumerate}
   \item  $G*H*K$ is $(H_\theta^V, \mathbb{P}*\dot{\mathbb{Q}}*\dot{\mathbb{R}})$-generic;
   \item  $N \models$ ``$(\dot{S}_i)_{G*H}$ is stationary for all $i < \mu$";
    \end{enumerate}
 \end{enumerate}
\textbf{then} $N$ believes that $j[G]$ has a lower bound in $j(\mathbb{P})$.
\end{enumerate}

Then $V^{\mathbb{P}} \models \text{FA}^{+\mu}(\dot{\Delta})$.
\end{theorem}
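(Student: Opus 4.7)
The plan is to use a Woodin-style reformulation of forcing axioms in terms of generic elementary embeddings, and then to lift such an embedding through the first factor of the iteration $\mathbb{P} * \dot{\mathbb{Q}} * \dot{\mathbb{R}}$. I would first set down the following embedding characterization: $\text{FA}^{+\mu}(\mathbb{S})$ with respect to a fixed family of $\omega_1$ dense subsets of $\mathbb{S}$ and a sequence $\vec{\dot{S}}$ of $\mathbb{S}$-names for $\mu$ stationary subsets of $\omega_1$ is equivalent to the statement that in some set-generic extension of $V$ there is a generic elementary embedding $j : V \to N$ with $\mathrm{crit}(j) = \omega_2^V$, with $H^V_\theta$ in the wellfounded part of $N$, with $j[H^V_\theta] \in N$ of $N$-cardinality $\omega_1$, together with an $(H^V_\theta, \mathbb{S})$-generic filter $F$ in $N$ for which $j[F]$ has a lower bound in $j(\mathbb{S})$ and $N$ models each $(\dot{S}_i)_F$ stationary. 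The lower bound is what turns the filter $F$ (which need not lie in $V$) into a filter in $V$ meeting the chosen dense sets, via the standard Foreman-Magidor-Shelah translation between ``$(M,\mathbb{S})$-generic filters'' and generic embeddings with a bounded image.

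Next, let $G \subseteq \mathbb{P}$ be $V$-generic and in $V[G]$ take $\mathbb{Q} = \dot{\mathbb{Q}}_G$, a family of $\omega_1$ dense subsets of $\mathbb{Q}$, and a $\mathbb{Q}$-name $\vec{\dot{S}}$ for $\mu$ stationary subsets of $\omega_1$. Using the hypothesis of the theorem, choose an associated $\mathbb{P}*\dot{\mathbb{Q}}$-name $\dot{\mathbb{R}}$ for which clauses (1)--(3) hold, and lift the chosen $\omega_1$ dense subsets of $\mathbb{Q}$ to dense subsets of $\mathbb{P}*\dot{\mathbb{Q}}*\dot{\mathbb{R}}$. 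Apply the embedding characterization of $\text{FA}^{+\mu}(\Gamma)$ in $V$ to $\mathbb{P}*\dot{\mathbb{Q}}*\dot{\mathbb{R}}$ (in $\Gamma$ by (1)) to obtain, in some set-generic extension of $V$, an embedding $j: V \to N$ satisfying (3a)--(3c) together with an $(H^V_\theta, \mathbb{P}*\dot{\mathbb{Q}}*\dot{\mathbb{R}})$-generic filter $G_0 * H_0 * K_0$ in $N$ with each $(\dot{S}_i)_{G_0 * H_0}$ stationary in $N$ (legitimate by (2))---this is precisely the hypothesis of clause (3).

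By clause (3), then, $N$ believes that $j[G_0]$ has a lower bound $p^* \in j(\mathbb{P})$. A standard lifting argument, adjoining (in a further set-generic extension) an $N$-generic filter $G^+ \ni p^*$ for $j(\mathbb{P})$, extends $j$ to an elementary embedding $\tilde{j}: V[G_0] \to N[G^+]$. The filter $H_0$ is $V[G_0]$-generic for $\dot{\mathbb{Q}}_{G_0}$ (taking $\theta$ large enough), and a lower bound for $\tilde{j}[H_0]$ in $\tilde{j}(\dot{\mathbb{Q}}_{G_0})$ can be read off from the lower bound for $j[G_0 * H_0 * K_0]$ in $j(\mathbb{P}*\dot{\mathbb{Q}}*\dot{\mathbb{R}})$ supplied by $\text{FA}^{+\mu}(\Gamma)$, by projecting to the middle factor. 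Therefore $(\tilde{j}, H_0)$ realizes the embedding characterization of $\text{FA}^{+\mu}$ for $\dot{\mathbb{Q}}_{G_0}$ over $V[G_0]$. Because the construction makes no choice particular to our fixed $G$, a standard forcing absoluteness argument concludes that $V^\mathbb{P} \models \text{FA}^{+\mu}(\dot{\Delta})$.

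The main obstacle, to my mind, is this descent step---passing from $\text{FA}^{+\mu}$ for the three-step iteration in $V$ to $\text{FA}^{+\mu}$ for the middle factor in $V[G]$, via a lift of $j$ along the first factor. Clause (3) is engineered precisely to make this descent possible by guaranteeing the lower bound needed to absorb the ground model generic into the embedding. A secondary subtlety is bookkeeping stationary names as they are reinterpreted across the three factors of the iteration and then pushed through $\tilde{j}$; closedness of $\Gamma$ and $\dot{\Delta}$ under restrictions (Definition \ref{def_Nice}) and clause (2) carry that load.
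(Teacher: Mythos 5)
Your overall architecture matches the paper's: characterize $\text{FA}^{+\mu}$ via generic embeddings, apply it in $V$ to the three-step iteration, use hypothesis \eqref{item_ClauseWhenever} to get a master condition for $j[G_0]$, lift along $j(\mathbb{P})$, and then read off the axiom for $\mathbb{Q}$ over $V[G_0]$. But the embedding characterization you set down is wrong in a way your argument then leans on. In the direction ``$\text{FA}^{+\mu}(\mathbb{S})$ implies the embedding exists,'' the generic ultrapower (by $\text{NS}$ restricted to the stationary set of $W$ admitting correct $(W,\mathbb{S})$-generic filters) produces a filter $F=[W\mapsto \bar h_W]$ in $N$ that is $(H^V_\theta,\mathbb{S})$-generic; it does \emph{not} produce one for which $j[F]$ has a lower bound in $j(\mathbb{S})$. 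By the \L{}o\'{s} theorem that would require that for stationarily many $W$ the $(W,\mathbb{S})$-generic filter $h_W$ itself have a lower bound in $\mathbb{S}$ --- i.e.\ that $\mathbb{S}$ admit total master conditions --- which already fails for Cohen forcing: there $F$ is fully $V$-generic, $j(\mathbb{S})=\mathbb{S}$ and $j[F]=F$ since everything sits below the critical point, and a lower bound for $F$ would be a single condition deciding the whole generic real. Consequently the step where you ``read off'' a lower bound for $\tilde j[H_0]$ by projecting ``the lower bound for $j[G_0*H_0*K_0]$ supplied by $\text{FA}^{+\mu}(\Gamma)$'' rests on an object the forcing axiom does not supply. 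Note also that if $\text{FA}^{+\mu}(\Gamma)$ did hand you a lower bound for all of $j[G_0*H_0*K_0]$, hypothesis \eqref{item_ClauseWhenever} of the theorem would be redundant, since its sole purpose is to manufacture a lower bound for the $\mathbb{P}$-part alone.

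The repair is to delete the lower-bound clause from the characterization (this is Theorem \ref{thm_VariantWoodinTheorem} of the paper): no master condition on the $\mathbb{Q}$ side is needed. Once you have $\tilde j : V[G_0]\to N[G^+]$ with $\tilde j\restriction H^{V}_\theta[G_0]$ and $H_0$ both elements of $N[G^+]$, the pair consisting of $\tilde j\big[H^V_\theta[G_0]\big]$ and $\tilde j[H_0]$ already witnesses \emph{inside} $N[G^+]$ that there is an $\omega_1$-sized model containing $\omega_1$, closed under the relevant Skolem functions, and admitting a correct generic filter for $\tilde j(\mathbb{Q})$; elementarity of $\tilde j$ then pulls this statement back to $V[G_0]$. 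Two further points you gesture at but must actually verify: the stationarity of each $(\dot S_i)_{G_0*H_0}$ has to survive the passage from $N$ to $N[G^+]$ (the paper gets this from Observation \ref{obs_FMS}: $\mathbb{P}$ is a regular suborder of a member of $\Gamma$, hence $\omega_1$-SSP, hence $j(\mathbb{P})$ is $\omega_1$-SSP in $N$), and $N$ may be illfounded, so the meaning of $N[G^+]$ and of name evaluation over $N$ requires the Boolean-quotient treatment the paper gives.
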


The theorem above is stated in a general form to accommodate both
\begin{itemize}
 \item the ``plus" versions of forcing axioms; and
 \item situations where one only wants to preserve a fragment of a forcing axiom---e.g.\ if $\Gamma$ is the class of proper forcings, but $\dot{\Delta}$ names the class of \emph{totally} proper posets in $V^{\mathbb{P}}$.  
\end{itemize}
In many situations, however, one wants for $\mathbb{P}$ to just preserve, say, PFA.  In this situation, the $\mu$ from Theorem \ref{thm_PreservationFA} is zero, and Theorem \ref{thm_PreservationFA} tells us that it suffices to show that for every $\mathbb{P}$-name $\dot{\mathbb{Q}}$ for a proper poset, there exists a $\mathbb{P}*\dot{\mathbb{Q}}$-name $\dot{\mathbb{R}}$ (possibly depending on $\dot{\mathbb{Q}}$) such that:
\begin{enumerate}
 \item $\mathbb{P}*\dot{\mathbb{Q}}*\dot{\mathbb{R}}$ is proper;
  \item \textbf{If} $j: V \to N$ is a generic elementary embedding, $\theta \ge |\mathbb{P}*\dot{\mathbb{Q}}*\dot{\mathbb{R}}|^+$ is regular in $V$, and 
 \begin{enumerate}
  \item $H^V_\theta$ is in the wellfounded part of $N$;
  \item $j[H^V_\theta] \in N$ and has size $\omega_1$ in $N$;
  \item $\text{crit}(j) = \omega_2^V$; and
  \item There exists a $G*H*K$ in $N$ that is $(H_\theta^V, \mathbb{P}*\dot{\mathbb{Q}}*\dot{\mathbb{R}})$-generic;

 \end{enumerate}
\textbf{then} $N$ believes that $j[G]$ has a lower bound in $j(\mathbb{P})$.
\end{enumerate}

In addition to using Theorem \ref{thm_PreservationFA} to prove our own results, Section \ref{sec_ExamplesFApres} provides several examples from the literature that can be viewed as instances of Theorem \ref{thm_PreservationFA}.

Section \ref{sec_Prelims} provides the relevant background.  Section \ref{sec_ControlInternal} provides some key theorems for precisely controlling the ``internal part" of an elementary submodel of size $\omega_1$, while also ensuring that the elementary submodel will be a guessing set.  These arguments frequently make use of Todorcevic's ``$\in$-collapse" poset of finite conditions (e.g.\ as in \cite{MR763902}), modified by a device introduced by Neeman~\cite{MR3201836} to ensure continuity of the generic chain.  Section \ref{sec_PreservViaEmbed} proves the forcing axiom preservation theorem mentioned above.  Section \ref{sec_SeparationApproach} proves Theorems \ref{thm_Cox_PFAplus1necessary}, \ref{thm_Cox_MMnecessary}, and \ref{thm_Cox_DontNeedPlus2}.  Section \ref{sec_SeparateStatReflect} proves the remaining theorems from the introduction (about RP and the forcing axioms $\text{PFA}^{+\omega_1}_{H^V_{\omega_2} \notin \text{IC}}$ etc.).  Section \ref{sec_ClosingRemarks} includes some questions and closing remarks.

\section{Preliminaries}\label{sec_Prelims}

Unless otherwise noted, all notation and terminology comes from Jech~\cite{MR1940513}.  For $m < n < \omega$, $S^n_m$ denotes the set of $\alpha < \omega_n$ such that $\text{cf}(\alpha) = \omega_m$.

\subsection{Classes of $\omega_1$ sized sets}\label{sec_ClassesOfSets}
We will use $\wp^*_{\omega_2}(V)$ to denote the class $\{ W \ : \ |W|=\omega_1 \subset W \}$ (the star superscript indicates that we are \emph{not} including Chang-like structures, i.e.\ $\wp^*_{\omega_2}(V)$ does not include $W$ of size $\omega_1$ such that $|W \cap \omega_1|=\omega$).  Foreman-Todorcevic~\cite{MR2115072} defined several weakenings of internal approachability; e.g.\ they called a $W \in \wp^*_{\omega_2}(V)$ \textbf{internally stationary} iff $W \cap [W]^\omega$ is stationary in $[W]^\omega$.  For the proofs in this article it will be convenient to use the following equivalent definitions.  If $W \in \wp^*_{\omega_2}(V)$, a \textbf{filtration of $\boldsymbol{W}$} is any $\subseteq$-increasing and continuous sequence $\vec{N}=\langle N_i \ : \ i < \omega_1 \rangle$ of countable sets such that $W = \bigcup_{i < \omega_1} N_i$.  It is easy to see that if $\vec{N}$ and $\vec{M}$ are both filtrations of $W$, then $N_i = M_i$ for all but nonstationarily many $i < \omega_1$.  The \textbf{internal part of $\boldsymbol{W}$}, denoted $\boldsymbol{ \textbf{int}(W)}$, is the equivalence class
\[
[\{ i < \omega_1 \ : \ N_i \in W \}]
\]
in the boolean algebra $\wp(\omega_1)/\text{NS}_{\omega_1}$, where $\vec{N}$ is any filtration of $W$.  Since any two filtrations of $W$ agree on a club subset of $\omega_1$, the choice of the filtration does not matter.  We will often abuse terminology and refer to a subset $T \subseteq \omega_1$ as the internal part of $W$, when really we mean that the equivalence class of $T$ modulo $\text{NS}_{\omega_1}$ is the internal part of $W$.  Similarly, when we say ``the internal part of $W$ contains $T$" we mean $[T] \le \text{int}(W)$ in the boolean algebra $\wp(\omega_1)/\text{NS}_{\omega_1}$.  The \textbf{external part of $\boldsymbol{W}$} is defined to be $\omega_1 \setminus \text{int}(W)$.  If $W \in \wp^*_{\omega_2}(V)$, $W$ is called \textbf{internally approachable} if there exists a filtration $\vec{N}=\langle N_i \ : \ i < \omega_1 \rangle$ of $W$ such that $\vec{N} \restriction i \in W$ for every $i < \omega_1$;
 \textbf{internally club} if $\text{int}(W)$ contains a club subset of $\omega_1$; and \textbf{internally stationary} if $\text{int}(W)$ is a stationary subset of $\omega_1$.  We use $\text{IA}$, $\text{IC}$, and $\text{IS}$, to denote the class of internally approachable, internally club, and internally stationary sets, respectively.   We will also sometimes refer to the class $\text{IU}$ of \textbf{internally unbounded} sets, which is the class of $W \in \wp^*_{\omega_2}(V)$ such that $W \cap [W]^\omega$ is $\subseteq$-cofinal in $[W]^\omega$.\footnote{$W \in \text{IU}$ can also be characterized by saying that there exists a filtration $\vec{N}$ of $W$ such that $N_i \in W$ for unboundedly many $i < \omega_1$.  If $W \in \text{IU} \setminus \text{IS}$, the set of such $i$ is nonstationary in $\omega_1$ for any filtration (and even empty for some filtrations of $W$), and hence $\text{int}(W) = [\emptyset]$ in the boolean algebra $\wp(\omega_1)/\text{NS}_{\omega_1}$.  However, the assertion $\text{int}(W) = [\emptyset]$ does not characterize ``$W \in \text{IU} \setminus \text{IS}$" even among those $W$ of uniform cofinality $\omega_1$, because there are always stationarily many $W \in \wp^*_{\omega_2}(H_{\aleph_{\omega+1}})$ that have uniform cofinality $\omega_1$, yet are not internally unbounded (this is due to Zapletal; see Foreman-Magidor~\cite{MR1846032}). }

\begin{lemma}\label{lem_InternalPartAbsolute}
Suppose $W \in \wp^*_{\omega_2}(V)$ has internal part $T$ and external part $T^c$ (we make no assumptions about stationarity or costationarity of $T$ here).  Then this remains true in any outer model where $\omega_1$ is not collapsed.
\end{lemma}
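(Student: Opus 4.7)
The plan is to compute the internal part of $W$ in any outer model $V'$ (with $\omega_1^{V'} = \omega_1^V$) by comparing an arbitrary $V'$-filtration of $W$ with a fixed $V$-filtration, and exploiting the fact that $\text{NS}^V_{\omega_1} \subseteq \text{NS}^{V'}_{\omega_1}$.

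First I would fix a filtration $\vec{N}=\langle N_i : i < \omega_1 \rangle \in V$ of $W$, and let $S := \{ i < \omega_1 : N_i \in W \}$. By the hypothesis that $T$ represents $\text{int}(W)$ in $V$, we have $[S] = [T]$ in $\wp(\omega_1)/\text{NS}^V_{\omega_1}$. Because $\omega_1$ is not collapsed, each $N_i$ remains countable in $V'$; increasingness, continuity, and the identity $W = \bigcup_{i < \omega_1} N_i$ are all absolute; and $W$ retains size $\omega_1$. So $\vec{N}$ remains a filtration of $W$ in $V'$, and $W \in \wp^*_{\omega_2}(V')$.

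Next, let $\vec{N}' \in V'$ be any filtration of $W$. The standard ZFC fact that any two filtrations of a given set agree on a club of $\omega_1$ (stated in the paragraph preceding the lemma) can be applied in $V'$ to $\vec{N}$ and $\vec{N}'$, producing a club $C \in V'$ on which $N_i = N'_i$. In particular $\{ i : N'_i \in W \} \cap C = S \cap C$, so these two subsets of $\omega_1$ are equivalent modulo $\text{NS}^{V'}_{\omega_1}$.

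Finally, any club of $\omega_1$ in $V$ remains a club in $V'$ (``closed and unbounded below $\omega_1$'' is absolute between models with the same $\omega_1$), so $\text{NS}^V_{\omega_1} \subseteq \text{NS}^{V'}_{\omega_1}$; hence the $V$-equivalence $[S]=[T]$ lifts to $V'$. Chaining the two equivalences yields $\{ i : N'_i \in W \} \equiv T \pmod{\text{NS}^{V'}_{\omega_1}}$. Since $\vec{N}'$ was an arbitrary $V'$-filtration of $W$, this shows that in $V'$ the internal part of $W$ is still represented by $T$ (and correspondingly the external part by $T^c$). There is no real obstacle here: the argument is pure bookkeeping about which filtrations and clubs are available in which model, and the only input beyond absoluteness of basic combinatorics is the monotonicity $\text{NS}^V_{\omega_1} \subseteq \text{NS}^{V'}_{\omega_1}$.
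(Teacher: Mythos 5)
Your proof is correct and follows essentially the same route as the paper's: fix a ground-model filtration, note that the club witnessing $[T]=\text{int}(W)$ survives to the outer model because $\omega_1$ is preserved, and conclude via the filtration-independence of the internal part. The only difference is that you spell out explicitly the comparison with an arbitrary $V'$-filtration, which the paper leaves implicit.
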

\begin{proof}
Fix any filtration $\vec{N}=\langle N_i \ : \ i < \omega_1 \rangle$ of $W$.  Then by definition of internal and external parts, there is a club $C \subseteq \omega_1$ such that 
\[
T \cap C \subseteq \{ i < \omega_1 \ : \ N_i \in W \}
\]
and
\[
T^c \cap C \subseteq \{ i < \omega_1 \ : \ N_i \notin W \}.
\]
If $V'$ is an outer model of $V$ with the same $\omega_1$, then $C$ is of course still club in $V'$, so $[T]=[T \cap C]$ and $[T^c] =[T^c \cap C]$ in the $\wp(\omega_1)/\text{NS}_{\omega_1}$ of $V'$.  So the above two containments witness that $V'$ believes $T$ is the internal part, and $T^c$ is the external part, of $W$.
\end{proof}

We also observe:
\begin{observation}\label{obs_InternalPartProjects}
If $M$ is a transitive $\text{ZF}^-$ model of size $\omega_1$ with internal part $T$, and $\mu \in [\omega_2^M, M \cap \text{ORD}]$ is regular in $M$, then the internal part of $(H_\mu)^M$ contains $T$.   
\end{observation}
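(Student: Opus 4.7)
The plan is to exhibit, for any filtration $\vec{M}=\langle M_i : i < \omega_1 \rangle$ of $M$, a canonically associated filtration of $W:=(H_\mu)^M$ for which the ``$\in$-indices" include those of $\vec{M}$.

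First I would observe that $W\in\wp^*_{\omega_2}(V)$: since $|M|=\omega_1$ we have $|W|\le\omega_1$, and since $\omega_1\subseteq M$ and $M$ is transitive we get $\omega_1^M=\omega_1<\mu$, so $\omega_1\subseteq W$. Fix any filtration $\vec{M}=\langle M_i : i<\omega_1\rangle$ of $M$, and set $W_i := M_i\cap W$. The sequence $\vec{W}=\langle W_i : i<\omega_1\rangle$ is $\subseteq$-increasing, continuous, consists of countable sets, and has union $M\cap W = W$, so it is a filtration of $W$.

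I would then show the containment $\{ i<\omega_1 : M_i\in M\}\ \subseteq\ \{ i<\omega_1 : W_i\in W\}$. Fix $i$ with $M_i\in M$. Because $M$ is a transitive $\mathrm{ZF}^-$ model and $\mu$ is a cardinal of $M$, the set $W=(H_\mu)^M$ is definable and hence an element of $M$; so separation in $M$ gives $W_i = M_i\cap W\in M$. To see that $W_i\in H_\mu^M$, I need $|\operatorname{tc}(W_i)|^M<\mu$. Now $W_i\subseteq H_\mu^M$, so every element of $W_i$ has hereditary cardinality less than $\mu$ in $M$. Moreover $M_i$ is countable in $V$ and $M_i\in M$ with $|M|=\omega_1$, hence $|M_i|^M\le \omega_1^M=\omega_1<\omega_2^M\le\mu$. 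The regularity of $\mu$ in $M$ then yields $|\operatorname{tc}(W_i)|^M<\mu$, so $W_i\in W$.

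Since the choice of filtration of $M$ only affects the ``$\in$-index set" on a nonstationary set, and likewise for the derived filtration of $W$, the containment above shows that $[\{i:M_i\in M\}]\le[\{i:W_i\in W\}]$ in $\wp(\omega_1)/\mathrm{NS}_{\omega_1}$, i.e.\ $T\le\operatorname{int}(W)$, as desired. The only genuinely non-bookkeeping step is the cardinality calculation showing $W_i\in H_\mu^M$, and that in turn reduces to the easy observation that any element of $M$ of $V$-cardinality $\omega$ has $M$-cardinality at most $\omega_1=\omega_1^M<\omega_2^M\le\mu$, combined with the regularity of $\mu$ in $M$; so I do not anticipate a real obstacle.
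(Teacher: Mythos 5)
Your proposal is correct and follows essentially the same route as the paper: intersect a filtration of $M$ with $(H_\mu)^M$ and observe that the $\in$-indices are preserved. The only difference is that you spell out the step that $M_i \cap (H_\mu)^M$ lands in $(H_\mu)^M$ rather than merely in $M$ (via the countability of $M_i$ and the regularity of $\mu$ in $M$), a point the paper's one-line proof leaves implicit although it is exactly where the hypothesis that $\mu$ is regular in $M$ is used.
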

\begin{proof}
Let $\vec{Q} = \langle Q_i \ : \ i < \omega_1 \rangle$ be a filtration of $M$, and let $C$ be club in $\omega_1$ such that $Q_i \in M$ for every $i \in T \cap C$.  Then $\langle Q_i \cap (H_\mu)^M \ : \ i < \omega_1 \rangle$ is a filtration of $(H_\mu)^M$, and if $i \in T \cap C$ then $Q_i \in M$ and hence $Q_i \cap (H_\mu)^M \in M$. 
\end{proof}

Although we will not use it in this paper, we provide a brief sketch of the proof of the folklore Lemma \ref{lem_ApproachAndInclusions} from the introduction.  Similar arguments are implicit in work of Shelah, and somewhat more explicit in Foreman-Magidor~\cite{MR1450520}.  By the assumption $2^{\omega_1} = \omega_2$, we can fix a bijection $\Phi: \omega_2 \to H_{\omega_2}$.  It is routine to check that
\begin{equation}\label{eq_CharIUPhi}
\text{IU} \cap \wp_{\omega_2}(H_{\omega_2}) =^* \{ \Phi[\gamma] \ : \ \gamma \in S^2_1 \}
\end{equation}
where the $=^*$ means ``equal mod NS in $\wp_{\omega_2}(H_{\omega_2})$".  Assume first that $\omega_2 \in I[\omega_2]$; this implies (see Foreman~\cite{MattHandbook}) that for a sufficiently large regular $\theta$, there is a first order structure $\mathfrak{A}=(H_\theta,\in,\dots)$ in a countable language and an $\omega_1$-club $D \subseteq S^2_1$ such that for every $\gamma \in D$, $\text{Sk}^{\mathfrak{A}}(\gamma) \cap \omega_2 = \gamma$ and there exists a strictly increasing sequence $\vec{\beta}^\gamma = \langle \beta^\gamma_i \ : \ i < \omega_1 \rangle$ that is cofinal in $\gamma$, and every proper initial segment of $\vec{\beta}^\gamma$ is an element of $W_\gamma:=\text{Sk}^{\mathfrak{A}}(\gamma)$.  We can without loss of generality assume that $\mathfrak{A}$ includes a predicate for $\Phi$, and hence for $W_\gamma \cap H_{\omega_2} = \Phi[\gamma]$ for every $\gamma \in D$.  By \eqref{eq_CharIUPhi}, together with the assumption that $D$ is almost all of $S^2_1$, to show that $\text{IA} =^* \text{IU}$ it will suffice to show that $W_\gamma \cap H_{\omega_2} \in \text{IA}$ for every $\gamma \in D$.  And for any $\gamma \in D$, $\langle i \cup \Phi[ \{ \beta^\gamma_j \ : \ j < i \}  ] \ : \ i < \omega_1 \rangle$ can easily be shown to be a filtration of $\Phi[\gamma]=W_\gamma \cap H_{\omega_2}$ witnessing its internal approachability.  The other direction of Lemma \ref{lem_ApproachAndInclusions} is easier; we leave this to the reader.

\subsection{Stationary reflection principles}\label{sec_prelims_statreflect}

Given a regular cardinal $\theta \ge \omega_2$, a subclass $\Gamma$ of $\wp^*_{\omega_2}(V)$, and a (possibly finite) cardinal $\mu \le \omega_1$,  $\boldsymbol{\textbf{RP}^\mu_\Gamma(\theta)}$ is the assertion that for every $\mu$-sized collection $\mathcal{S}$ of stationary subsets of $[H_\theta]^\omega$, there is a $W \in \Gamma$ such that $S \cap [W]^\omega$ is stationary for every $S \in \mathcal{S}$.  The principle $\boldsymbol{\textbf{RP}^\mu_{\textbf{internal, } \Gamma}(\theta)}$---which was isolated, but not named, in Fuchino-Usuba~\cite{FuchinoUsuba}---asserts that for every $\mu$-sized collection $\mathcal{S}$ of stationary subsets of $[H_\theta]^\omega$, there is a $W \in \Gamma$ such that $S \cap W \cap [W]^\omega$ (not just $S \cap [W]^\omega$) is stationary for every $S \in \mathcal{S}$.  

If $\Gamma$ is not specified in the subscript of RP, it is understood to be $\wp^*_{\omega_2}(V)$.  We also typically write $\text{RP}$ instead of $\text{RP}^1$.   $\text{RP}^\mu_\Gamma$ means that $\text{RP}^\mu_\Gamma(\theta)$ holds for every regular $\theta \ge \omega_2$.  Similar notational shortcuts apply to all the reflection principles we mention.  Notice that:

\begin{observation}\label{obs_RPIC_automaticInternal}
$\text{RP}_{\text{IC}}$ is equivalent to $\text{RP}_{\text{internal, IC}}$, since if $W \in \text{IC}$ and $S \cap [W]^\omega$ is stationary, then $S \cap W \cap [W]^\omega$ is stationary as well.
\end{observation}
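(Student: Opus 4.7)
My plan is to verify the only nontrivial direction, namely $\text{RP}_{\text{IC}} \implies \text{RP}_{\text{internal}, \text{IC}}$; the reverse implication is immediate since the conclusion of $\text{RP}_{\text{internal},\text{IC}}$ is formally stronger than that of $\text{RP}_{\text{IC}}$. So fix a regular $\theta \ge \omega_2$ and a stationary $S \subseteq [H_\theta]^\omega$, and suppose that $\text{RP}_{\text{IC}}$ provides some $W \in \text{IC} \cap \wp^*_{\omega_2}(H_\theta)$ with $S \cap [W]^\omega$ stationary in $[W]^\omega$. I want to upgrade this to saying $S \cap W \cap [W]^\omega$ is stationary in $[W]^\omega$.

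The key step is to exhibit a club subset of $[W]^\omega$ that is contained in $W$. Since $W \in \text{IC}$, unwrap the definition: pick any filtration $\vec{N} = \langle N_i : i < \omega_1 \rangle$ of $W$ and a club $C \subseteq \omega_1$ such that $N_i \in W$ for every $i \in C$. Because $\vec{N}$ is $\subseteq$-increasing, continuous, and cofinal in $[W]^\omega$, its restriction $\{N_i : i \in C\}$ to the club $C$ is again cofinal in $[W]^\omega$; and continuity of $\vec{N}$ together with closure of $C$ ensures that this restricted range is closed under unions of countable $\subseteq$-chains. Thus $\{N_i : i \in C\}$ is club in $[W]^\omega$, and by construction it lies entirely inside $W$.

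Now the argument finishes routinely: $S \cap [W]^\omega$ is stationary in $[W]^\omega$ by hypothesis, so its intersection with the club $\{N_i : i \in C\}$ is still stationary in $[W]^\omega$; this intersection is a subset of $S \cap W \cap [W]^\omega$, which is therefore also stationary in $[W]^\omega$. Since $W \in \text{IC}$, this is exactly what $\text{RP}_{\text{internal}, \text{IC}}$ demands.

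There is no real obstacle here; the only mild care needed is in checking that ``$\vec{N}$ restricted to a club $C$'' is still club-in-$[W]^\omega$ rather than merely cofinal, and that is immediate from continuity of the filtration together with the fact that $C$ contains its countable suprema. The same template would of course work for any class $\Gamma \subseteq \text{IC}$, which is why the observation is genuinely about the structural fact that internally club sets $W$ have a club of their own countable subsets sitting inside $W$.
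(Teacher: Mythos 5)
Your proof is correct and is exactly the paper's (one-line) argument, spelled out in full: the point is that for $W \in \text{IC}$ the set $W \cap [W]^\omega$ contains a club of $[W]^\omega$ (the range of a filtration restricted to a club of indices), so stationarity of $S \cap [W]^\omega$ transfers to $S \cap W \cap [W]^\omega$. No gaps; the verification that the restricted filtration is club in $[W]^\omega$ is handled properly via continuity and closure of $C$.
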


We now recall the diagonal versions of stationary reflection, as introduced in \cite{DRP}.  $\boldsymbol{\textbf{DRP}_\Gamma(\theta)}$  asserts that there are stationarily many $W \in \wp_{\omega_2}(H_{(2^\theta)^+})$ such that $W \cap H_\theta \in \Gamma$ and $S \cap [W \cap H_\theta]^\omega$ is stationary in $[W \cap H_\theta]^\omega$ whenever $S \in W$ and $S$ is stationary in $[H_\theta]^\omega$.   We also define a diagonal version of Fuchino-Usuba's internal reflection:     $\boldsymbol{\textbf{DRP}_{\textbf{internal, }\Gamma}(\theta)}$ is defined the same way as $\boldsymbol{\textbf{DRP}_\Gamma(\theta)}$, except we require that $S \cap W \cap [W \cap H_\theta]^\omega$, not just $S \cap [W \cap H_\theta]^\omega$, is stationary (for every $S \in W$ that is stationary in $[H_\theta]^\omega$).

\begin{remark}\label{rem_Larson}
The principle DRP easily implies $\text{RP}^{\omega_1}$, and in Cox~\cite{DRP} it was asked if they are equivalent.  The author subsequently noticed that Larson~\cite{MR1782117} gives a model where $\text{RP}^{\omega_1}$ holds, but DRP fails.  So DRP is strictly stronger than $\text{RP}^{\omega_1}$.
\end{remark}
   
We also define a weaker kind of diagonal reflection.  If $\Gamma$ is a subclass of $\wp^*_{\omega_2}(V)$, $\text{DRP}^{\text{cof}(\omega)}_\Gamma$ asserts that for every regular $\theta \ge \omega_2$, there are stationarily many $W \in \wp^*_{\omega_2}\big(  H_{(2^\theta)^+} \big)$ such that for every $R \in W$ that is a stationary subset of $\theta \cap \text{cof}(\omega)$, $R \cap \text{sup}(W \cap \theta)$ is stationary in $\text{sup}(W \cap \theta)$.  This is a consequence of ``weak DRP" ($\text{wDRP}_\Gamma$) introduced in \cite{DRP}; whether these two principles are equivalent is not known.

We now mention a few facts that, although we will not use them in this paper, illustrate that the notion of internal stationary reflection (and its diagonal version) are quite natural and related to several other well-studied topics.  The following version of Strong Chang's Conjecture was isolated (but not named) in Fuchino-Usuba~\cite{FuchinoUsuba}.  It is a stronger version of the principle $\text{SCC}^{\text{cof}}_{\text{gap}}$ considered in Cox~\cite{Cox_Nonreasonable}.

\begin{definition}\label{def_GlobalSCCcofgap}
\textbf{Global $\boldsymbol{\text{SCC}^{\text{cof}}_{\text{gap}}}$} is the following assertion:  for all sufficiently large regular $\theta$ and all wellorders $\Delta$ on $H_\theta$ and every countable $M \prec (H_\theta,\in, \Delta)$, there are $\subseteq$-cofinally many $W \in [H_\theta]^{\omega_1}$ such that:
\begin{enumerate}
 \item $\omega_1 \subset W$;
 \item Letting $M(W)$ denote $ \text{Sk}^{(H_\theta,\in,\Delta)}(M \cup \{ W \})$, we have
 \[
 M(W) \cap W = M
 \]
\end{enumerate}
\end{definition}

We note that the version of Strong Chang's Conjecture isolated by Doebler-Schindler~\cite{MR2576698} has a similar characterization, where the $M(W) \cap W = M$ requirement from Definition \ref{def_GlobalSCCcofgap} is weakened to the requirement that $M(W) \cap W \sqsupseteq M$ (equivalently, that $M(W) \cap \omega_1 = M \cap \omega_1$).  Doebler-Schindler's version is equivalent to the $\dagger$ principle, which in turn is equivalent to Semistationary set reflection (see \cite{MR2576698}).

The relevance of Global $\text{SCC}^{\text{cof}}_{\text{gap}}$ to the current paper is the following theorem:
\begin{theorem}[Fuchino-Usuba~\cite{MR2576698}]
$\text{RP}_{\text{internal}}$ is equivalent to Global $\text{SCC}^{\text{cof}}_{\text{gap}}$.
\end{theorem}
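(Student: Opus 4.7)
My plan is to establish both directions of the equivalence by directly translating between the combinatorial condition $M(W) \cap W = M$ and the internal stationary reflection of subsets of $[H_\theta]^\omega$ into $[W]^\omega$, using a countable elementary submodel $M$ as the intermediary in each direction.

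For the forward direction, $\text{RP}_{\text{internal}} \implies$ Global $\text{SCC}^{\text{cof}}_{\text{gap}}$, I would fix $\theta$, a wellorder $\Delta$ on $H_\theta$, a countable $M \prec (H_\theta,\in,\Delta)$, and an arbitrary $X \in [H_\theta]^{\omega_1}$ that the desired $W$ must contain (to obtain cofinally many such $W$). Pass to a sufficiently large regular $\chi$ and consider the stationary set
\[
S_M \ := \ \{ N \in [H_\chi]^\omega \ : \ N \prec (H_\chi,\in,\Delta, H_\theta, M, X), \ M \subseteq N, \ X \in N \}.
\]
Applying $\text{RP}_{\text{internal}}$ at $\chi$ yields $W^* \in \wp^*_{\omega_2}(H_\chi)$ with $X \in W^*$ and $S_M \cap W^* \cap [W^*]^\omega$ stationary in $[W^*]^\omega$. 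Set $W := W^* \cap H_\theta$, so $X \subseteq W$, $\omega_1 \subseteq W$, and $M \subseteq W$. For any $x = f(\vec{m}, W) \in M(W) \cap W$, pick $N \in S_M \cap W^*$ with $N \subseteq W^*$ and $\{x,\vec{m},f\} \subseteq N$; then the elementarity of $N$, together with $N \cap H_\theta \subseteq W$ and the fact that $W$ is not a parameter inside $N$, forces $x$ to be Skolem-definable from $\vec{m}$ alone, hence $x \in M$.

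For the reverse direction, Global $\text{SCC}^{\text{cof}}_{\text{gap}} \implies \text{RP}_{\text{internal}}$, I would fix stationary $S \subseteq [H_\theta]^\omega$, a countable $M \prec (H_\theta,\in,\Delta)$ with $S \in M$, and apply Global $\text{SCC}^{\text{cof}}_{\text{gap}}$ to obtain $W \in [H_\theta]^{\omega_1}$ with $\omega_1 \subseteq W$ and $M(W) \cap W = M$. Given any $g \colon [W]^{<\omega} \to W$, I would build a countable $N \in S$ that is simultaneously an element of $W$ and a subset of $W$ by interleaving closure under $g$ with Skolem closure inside $(H_\theta,\in,\Delta)$ starting from $M(W) \ni S$; the gap condition $M(W) \cap W = M$ propagates to keep successive intersections with $W$ contained in $M \subseteq W$, while elementarity of $M(W)$ places the resulting countable $N$ as an element of $W$.

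The main technical obstacle is the reverse direction: converting the purely algebraic equality $M(W) \cap W = M$ into a stationary reflection statement closed under an arbitrary external function $g$. The cleanest resolution, and presumably the route taken in Fuchino-Usuba, is to factor both principles through their common equivalent, the game-theoretic principle $G^{\downarrow\downarrow}$, where a single winning strategy for Player II simultaneously produces a $W$ witnessing the gap condition and the closure-under-$g$ requirement by absorbing auxiliary data round by round.
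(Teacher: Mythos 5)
First, a point of reference: the paper does not prove this statement at all --- it is quoted from Fuchino--Usuba --- so there is no in-paper argument to compare yours against, and I can only assess the sketch on its own terms.

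The forward direction has a genuine gap at its central step. Having reflected $S_M$ internally to $W^*$ and set $W := W^* \cap H_\theta$, you take $x = f(\vec{m},W) \in M(W) \cap W$, choose $N \in S_M \cap W^* \cap [W^*]^\omega$ with $x \in N$, and conclude that ``$x$ is Skolem-definable from $\vec{m}$ alone'' because ``$W$ is not a parameter inside $N$.'' There is no mechanism behind this inference: the identity $x = t^{(H_\theta,\in,\Delta)}(\vec{m},W)$ is a fact about $H_\theta$, and $N$'s ignorance of $W$ has no bearing on whether that term can be rewritten without the parameter $W$. Worse, since $S_M \cap W^* \cap [W^*]^\omega$ is stationary, hence cofinal, in $[W^*]^\omega$, \emph{every} $x \in W$ lies in some such $N$; so the step you need is exactly the conclusion $M(W) \cap W \subseteq M$ restated, with no argument supplied. (Note also that the construction must use something beyond ``$W$ is an elementary submodel of size $\omega_1$ containing $\omega_1$ and $M$,'' since otherwise Global $\text{SCC}^{\text{cof}}_{\text{gap}}$ would be a ZFC theorem, which it is not.) A smaller but real problem: $\text{RP}_{\text{internal}}$ as defined only gives \emph{some} reflecting $W^*$, so asserting $X \in W^*$ and then $X \subseteq W$ is unjustified; to get cofinally many witnesses one needs the Foreman--Todorcevic directed-union argument (observing that $W$ is the increasing union of the reflected models, hence elementary in a structure containing a surjection $e:\omega_1 \to X$), since requiring $X \in N$ only yields $X \in W$, not $X \subseteq W$.

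The reverse direction is likewise not carried out, and the one concrete mechanism you name points the wrong way: $M(W) \cap W = M$ says that the only members of $M(W)$ lying in $W$ are those already in the countable set $M$, whereas a countable $N$ closed under an arbitrary $g:[W]^{<\omega} \to W$ will in general not lie in $M$, so ``elementarity of $M(W)$'' cannot place such an $N$ inside $W$. Moreover, a single application of the gap condition to one countable $M$ cannot by itself produce the stationarily many $N \in S \cap W \cap [W]^\omega$ that internal reflection requires; one must iterate $\omega_1$ times while preserving the gap, and the uniformity needed to do that is precisely what a winning strategy in $G^{\downarrow\downarrow}$ provides. Your closing paragraph correctly guesses that Fuchino--Usuba factor both implications through $G^{\downarrow\downarrow}$ (in one direction using open determinacy to convert a winning strategy for the adversary into a stationary set that cannot reflect internally to any $W$), but since the proposal neither defines the game nor proves either equivalence with it, the hard content of the theorem is deferred rather than established.
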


The following lemma characterizes internal, diagonal reflection.  It is a situation one often encounters when generically lifting a large cardinal embedding $j: V \to M$ to domain $V^{\mathbb{P}}$, when $j(\mathbb{P})$ is proper in $M$ (but not necessarily in $V$).

\begin{lemma}\label{lem_Char_InternalDRPGamma}
Let $\Gamma$ be a subclass of $\wp^*_{\omega_2}(V)$.  The following are equivalent:
\begin{enumerate}
 \item\label{item_DRPinternal} $\text{DRP}_{\text{internal}, \ \Gamma}$
 \item\label{item_Condense} For every regular $\theta \ge \omega_2$ there are stationarily many $W \in \wp^*_{\omega_2}(H_{(2^\theta)^+})$ such that:
 \begin{enumerate}
  \item  $W \cap H_\theta \in \Gamma$; and
  \item $H_W$ is correct about stationary subsets of $[\sigma_W^{-1}(\theta)]^\omega$, where $\sigma_W: H_W \to W \prec H_{(2^\theta)^+}$ is the inverse of the Mostowski collapse of $W$.
  \end{enumerate}
 \item\label{item_DRPintEmbedding} For every regular $\theta \ge \omega_2$ there is a generic elementary embedding $j: V \to N$ such that:
 \begin{enumerate}
  \item $H^V_{(2^\theta)^+}$ is in the wellfounded part of $N$;
  \item $j \restriction H^V_{(2^\theta)^+} \in N$;
  \item $N \models $ $j[H^V_\theta]$ is a member of $j(\Gamma)$;

  \item $\text{crit}(j) = \omega_2^V$
  \item $N \models$ ``$H^V_{(2^\theta)^+}$ is correct about stationary subsets of $[\theta]^\omega$".
 \end{enumerate}
\end{enumerate}
\end{lemma}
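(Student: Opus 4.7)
The plan is to prove the cyclic chain $(1) \Rightarrow (2) \Rightarrow (3) \Rightarrow (1)$, with the Mostowski collapse of $W$ as the main tool.  Throughout let $\sigma_W: H_W \to W$ denote the inverse of the collapse.  The central computational claim behind $(1) \Leftrightarrow (2)$ is: for $W \in \wp^*_{\omega_2}(H_{(2^\theta)^+})$ and any $S \in W$ that $V$ believes is stationary in $[H_\theta]^\omega$, the set $S \cap W \cap [W \cap H_\theta]^\omega$ is stationary in $[W \cap H_\theta]^\omega$ iff $\sigma_W^{-1}(S)$ is stationary in $[\sigma_W^{-1}(\theta)]^\omega$ as computed in $V$.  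This follows via the standard Mostowski-collapse correspondence: $\sigma_W$ induces a bijection between the countable subsets of $\sigma_W^{-1}(\theta)$ lying in $H_W$ and the set $W \cap [W \cap H_\theta]^\omega$, and since arbitrary $V$-clubs on either side can be thinned by intersecting with canonical Skolem-closure clubs coded by parameters in $W$ (resp.\ $H_W$), stationarity transfers through this bijection.

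Given this claim, $(1) \Rightarrow (2)$ is immediate: fix $W$ witnessing $(1)$ for the given $\theta$ and let $\bar{S} \in H_W$ be $H_W$-stationary in $[\sigma_W^{-1}(\theta)]^\omega$.  By elementarity of $\sigma_W$, the set $S := \sigma_W(\bar{S}) \in W$ is $V$-stationary in $[H_\theta]^\omega$, so applying $(1)$ together with the central claim yields that $\bar{S} = \sigma_W^{-1}(S)$ is $V$-stationary, as desired.  The reverse direction of correctness---$V$-stationarity $\Rightarrow H_W$-stationarity---is automatic since every $H_W$-club is also a $V$-club.  The implication $(2) \Rightarrow (1)$ runs this argument in reverse, again via the central claim.

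For $(2) \Leftrightarrow (3)$ I would invoke the standard correspondence between ``stationarily many $W \in \wp^*_{\omega_2}(H^V_{(2^\theta)^+})$ with property $P$'' and generic ultrapower embeddings obtained by forcing with $\wp\bigl(\wp^*_{\omega_2}(H^V_{(2^\theta)^+})\bigr)/\text{NS}$ restricted below the $P$-set.  Given $(2)$, force below the set of $W$'s witnessing $(2)$; the resulting generic ultrapower $j: V \to N$ has $\text{crit}(j) = \omega_2^V$, places $j \restriction H^V_{(2^\theta)^+}$ in $N$, and the generic substructure $j[H^V_{(2^\theta)^+}]$ has Mostowski collapse (inside $N$) equal to the transitive set $H^V_{(2^\theta)^+}$ with collapse inverse $j \restriction H^V_{(2^\theta)^+}$.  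Under this identification $\sigma_W^{-1}(j(\theta)) = \theta$, so clause (2.b) translates directly to clause (3.e); by the usual Los-style computation, ``$W \cap H^V_\theta \in \Gamma$ for stationarily many $W$'' translates to $N \models j[H^V_\theta] \in j(\Gamma)$, using that $j[H^V_{(2^\theta)^+}] \cap j(H^V_\theta) = j[H^V_\theta]$ in $N$.  Conversely $(3) \Rightarrow (2)$ extracts a stationary set of ground-model witnesses from the existence of such a generic embedding in the standard way.

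I expect the main technical delicacy to be in the central claim of paragraph 1: $V$-clubs on either side of the $\sigma_W$-correspondence need not lie in $W$ or $H_W$, yet stationarity must be transported between the two partial orders using only sets carried by $\sigma_W$.  The resolution is the familiar thinning argument---any $V$-club may be refined by intersecting with the club of countable sets closed under a fixed countable family of Skolem functions for a structure whose parameters lie in $W$ (resp.\ $H_W$), and these thinned clubs do transfer via $\sigma_W$.  Once this stationarity-transfer is in place, the rest of the argument is routine bookkeeping.
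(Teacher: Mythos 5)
Your proposal is correct and follows exactly the strategy the paper indicates: the paper gives no self-contained proof of this lemma, instead citing Theorem 3.6 of Cox's DRP paper for the condensation equivalence $(1)\Leftrightarrow(2)$ and Theorem 8 of Cox's paper on forcing axioms and ideals for the generic-ultrapower equivalence $(2)\Leftrightarrow(3)$, and your two steps reconstruct precisely those arguments. The only stylistic remark is that your ``central claim'' needs no thinning machinery: since $\omega_1\subseteq W$, the collapse $\pi=\sigma_W^{-1}$ induces a bijection $[W\cap H_\theta]^\omega\to[\pi(H_\theta)]^\omega$ that preserves clubs and stationary sets outright and carries $S\cap W\cap[W\cap H_\theta]^\omega$ exactly onto $\pi(S)$.
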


The equivalence of \ref{item_DRPinternal} with \ref{item_Condense} is almost identical to the proof of Theorem 3.6 of Cox~\cite{DRP}, and the equivalence of \ref{item_Condense} with \ref{item_DRPintEmbedding} is a generic ultrapower argument, closely resembling the proof of Theorem 8 of Cox~\cite{Cox_FA_Ideals}.  We refer the reader to those sources.\footnote{ Those two cited proofs deal with the particular class $\Gamma = \text{IC}$, but go through for internal reflection.  The key use of the class IC in those proofs was the above Observation \ref{obs_RPIC_automaticInternal}.  }

\subsection{Forcing Axioms and partially generic filters}\label{sec_ForcingAxiomPrelims}

Given a class $\Gamma$ of partial orders, and a (possibly finite) cardinal $\mu \le \omega_1$,  $\boldsymbol{\textbf{FA}^{+\mu}(\Gamma)}$ is the assertion:  whenever $\mathbb{P} \in \Gamma$, $\mathcal{D}$ is an $\omega_1$-sized collection of dense subsets of $\mathbb{P}$, and $\langle \dot{S}_i \ : \ i < \mu \rangle$ is a $\mu$-length list of $\mathbb{P}$-names for stationary subsets of $\omega_1$, then there is a filter $g \subset \mathbb{P}$ such that $g \cap D \ne \emptyset$ for every $D \in \mathcal{D}$, and for every $i < \mu$ the following set is stationary in $\omega_1$:
\[
(\dot{S}_i)_g:= \{ \xi < \omega_1 \ : \ \exists p \in g \ \ p \Vdash \check{\xi} \in \dot{S}_i   \}.
\]
In the literature $\text{FA}^{+\omega_1}(\Gamma)$ is often denoted $\text{FA}^{++}(\Gamma)$, but we do not use that convention (since we will need to deal both with the case $\mu = 2$ and the case $\mu = \omega_1$).  $\textbf{FA}\boldsymbol{(\Gamma)}$ is as defined above, but without mentioning the names for stationary sets (so $\text{FA}(\Gamma)$ is the same as $\text{FA}^{+0}(\Gamma)$).

If $\mathbb{Q}$ is a poset and $\mathbb{Q} \in W \prec H_\theta$, we say that \textbf{$\boldsymbol{g}$ is a $\boldsymbol{(W,\mathbb{Q})}$-generic filter} iff $g$ is a filter on $W \cap \mathbb{Q}$ and $g \cap D \cap W \ne \emptyset$ for every $D \in W$ that is dense in $\mathbb{Q}$.  If $\mu \le \omega_1$ and $\dot{\vec{S}} = \langle \dot{S}_i \ : \ i < \mu \rangle$ is a sequence of $\mathbb{Q}$-names for stationary subsets of $\omega_1$ such that $\dot{\vec{S}} \in W$, we say that \textbf{$\boldsymbol{g}$ is an $\boldsymbol{\dot{\vec{S}}}$-correct, $\boldsymbol{(W,\mathbb{Q})}$-generic filter} if $g$ is a $(W,\mathbb{Q})$-generic filter as defined above, and for every $i < \mu$,
\[
(\dot{S}_i)_g:= \{ \alpha < \omega_1 \ : \ \exists q \in g \ q \Vdash \ \check{\alpha} \in \dot{S}_i \}
\]
is (in $V$) a stationary subset of $\omega_1$.

Let $\sigma_W:H_W \to W \prec H_\theta$ be the inverse of the Mostowski collapse of $W$, and suppose $g \subset W \cap \wp(\mathbb{Q})$.   It is easy to see that $g$ is $(W,\mathbb{Q})$-generic if and only if $\sigma_W^{-1}[g]$ is an $(H_W, \sigma^{-1}(\mathbb{Q}))$-generic filter in the usual sense.  Furthermore, if $\mu \subset W$, $\dot{\vec{S}} \in W$, and $g$ is a $(W,\mathbb{Q})$-generic filter, then $\dot{S}_i \in W$ for every $i < \mu$, and $g$ is $\dot{\vec{S}}$-correct iff for every $i < \mu$, the evaluation of $\sigma_W^{-1}(\dot{S}_i)$ by $\sigma_W^{-1}[g]$ is (in $V$) a stationary subset of $\omega_1$.

\subsection{Guessing models and strongly proper club shooting}\label{sec_Guessing}

A pair of transitive $\text{ZF}^-$ models $(M,N)$ has the \textbf{$\boldsymbol{\omega_1}$-approximation property} iff for every $X \in M$ and every $A \in \wp(X) \cap N$, if $A \cap z \in M$ for every $z \in M$ such that $M \models$ ``$z$ is countable", then $A \in M$.  We say that $(M,N)$ has the \textbf{$\boldsymbol{\omega_1}$-covering property }  iff for every $z \in N$ that is countable in $N$, there is a $z' \in M$ that is countable in $M$ such that $z \subseteq z'$.  A poset $\mathbb{P}$ has the \textbf{$\boldsymbol{\omega_1}$ approximation property } iff  it forces that $(V,V^{\mathbb{P}})$ has the $\omega_1$ approximation property; and has the \textbf{$\boldsymbol{\omega_1}$-covering property} iff it forces that $(V,V^{\mathbb{P}})$ has the $\omega_1$-covering property.  The following fact is often used:
\begin{fact}[Viale-Weiss~\cite{VW_ISP}]\label{fact_CoverApproxClosed2step}
If $\mathbb{P}$ has the $\omega_1$ approximation and covering properties, and $\Vdash_{\mathbb{P}}$ ``$\dot{\mathbb{Q}}$ has the $\omega_1$ approximation and covering properties", then $\mathbb{P}*\dot{\mathbb{Q}}$ has the $\omega_1$ approximation and covering properties.
\end{fact}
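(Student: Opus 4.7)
The plan is to verify the two-step poset $\mathbb{P}*\dot{\mathbb{Q}}$ has each of the two properties separately, by factoring the argument through the intermediate model $V^{\mathbb{P}}$. Throughout, let $G*H$ be $(V,\mathbb{P}*\dot{\mathbb{Q}})$-generic. The overall strategy is to move countable witnesses ``downward'' through covering, and to move the set $A$ in an approximation hypothesis ``upward'' through the two approximation properties in succession.

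For the $\omega_1$-covering property, I would just chain the two covering properties. Given a countable $z \in V[G][H]$, apply $\omega_1$-covering of $\dot{\mathbb{Q}}$ over $V[G]$ to obtain a countable $z' \in V[G]$ with $z \subseteq z'$, then apply $\omega_1$-covering of $\mathbb{P}$ over $V$ to obtain a countable $z'' \in V$ with $z' \subseteq z''$. Then $z'' \in V$ covers $z$, as required.

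The approximation property is the substantive part, and the plan is to verify it in two stages. Suppose $A \in V[G][H]$ with $A \subseteq X \in V$, and assume $A \cap z \in V$ for every $z \in V$ that is countable in $V$. I first claim that $A \in V[G]$: by $\omega_1$-approximation of $\dot{\mathbb{Q}}$ over $V[G]$, it suffices to show $A \cap z' \in V[G]$ whenever $z' \in V[G]$ is countable in $V[G]$. Given such a $z'$, use $\omega_1$-covering of $\mathbb{P}$ to find a countable $z \in V$ with $z' \subseteq z$; then by hypothesis $A \cap z \in V \subseteq V[G]$, so $A \cap z' = (A \cap z) \cap z' \in V[G]$. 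Thus $A \in V[G]$. Now apply $\omega_1$-approximation of $\mathbb{P}$ over $V$ to $A$: the hypothesis ``$A \cap z \in V$ for every $z \in V$ countable in $V$'' is exactly what is needed, so $A \in V$, completing the argument.

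The only conceptual subtlety is ensuring that the approximation hypothesis available in the ambient two-step extension can be transported to each single-step level, which is exactly what the covering property on the bottom step provides. No step should present a real obstacle; this is essentially bookkeeping using the definitions, which is presumably why the authors attribute it as a fact without proof.
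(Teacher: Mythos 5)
Your proof is correct. The paper states this as a cited fact (Viale--Weiss) without giving a proof, and your argument---chaining the two covering properties, and then using covering of $\mathbb{P}$ to transfer the approximation hypothesis from $V$ down to $V[G]$ before applying the two approximation properties in succession---is exactly the standard argument for this result.
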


A set $W$ is called an \textbf{$\boldsymbol{\omega_1}$-guessing model} if $W \in \wp^*_{\omega_2}(V)$, $\big(W, \in \cap (W \times W) \big) \models \text{ZF}^-$, and $(H_W,V)$ has the $\omega_1$-approximation property, where $H_W$ is the transitive collapse of $W$.\footnote{This is equivalent to the original definition of guessing model from Viale-Weiss~\cite{VW_ISP}.}   We will often just say ``guessing model" instead of ``$\omega_1$-guessing model", and use either $\boldsymbol{G}$ or (when there is risk of confusion) $\boldsymbol{G_{\omega_1}}$ to denote the class of guessing models.  Viale-Weiss~\cite{VW_ISP} proved that Weiss' generalized tree property principle $\text{ISP}(\omega_2)$ is equivalent to the assertion that for every regular $\theta \ge \omega_2$, the set of guessing models is stationary in $\wp^*_{\omega_2}(H_\theta)$.  We say that $W$ is an \textbf{indestructible guessing model (in $\boldsymbol{V}$)} if, letting $\theta_W$ be the least regular cardinal such that $W \in H^V_{\theta_W}$, then $W$ remains a guessing model in any $\text{ZF}^-$ model $(N, \in_N)$ such that $H^V_{\theta_W}$ is an element of the  wellfounded part of $N$ and $\omega_1^V = \omega_1^N$ (here $N$ may be external to $V$, and we even allow that $V$ can be a set from $N$'s point of view.).  This of course isn't a first order statement over $V$, but in typical contexts there are first-order substitutes for this notion, e.g.\ the presence of specializing functions in $H^V_\theta$ that guarantee such indestructibility (see Proposition 4.4 of \cite{Cox_Krueger_2}).    

We let GIU denote the (possibly empty) class of guessing models that are also internally unbounded.
\begin{theorem}[Viale-Weiss~\cite{VW_ISP}; see also Proposition 4.4 of \cite{Cox_Krueger_2}]\label{thm_Sealing}
Assume $\theta \ge \omega_2$ is regular, and $\mathbb{P}$ is a poset that forces $H^V_\theta \in \text{GIU}$.\footnote{A sufficient condition for this is if $\mathbb{P}$ collapses $H^V_\theta$ to size $\omega_1$, and has the $\omega_1$ covering and approximation properties.}  Then there is a $\mathbb{P}$-name $\dot{\mathbb{S}}(H^V_\theta)$ for a c.c.c.\ poset such that $\mathbb{P}*\dot{\mathbb{S}}(H^V_\theta)$ forces $H^V_\theta$ to be \textbf{indestructibly} guessing.
\end{theorem}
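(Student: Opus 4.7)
The plan is to work inside $V^{\mathbb{P}}$, where $W := H^V_\theta$ is a guessing, internally unbounded set of size $\aleph_1$, and c.c.c.-specialize a family of $\omega_1$-trees whose cofinal branches in any outer $\omega_1$-preserving extension would exactly witness failures of guessingness of $W$. Because $W \in \text{IU}$, I fix a continuous $\subseteq$-increasing filtration $\langle M_\alpha : \alpha < \omega_1\rangle$ of countable elements of $W$ with $W = \bigcup_\alpha M_\alpha$; $\subseteq$-cofinality of $W \cap [W]^\omega$ in $[W]^\omega$ ensures any countable $z \in W$ is contained in some $M_\alpha$. Consequently the guessing property for $W$ in any outer model with the same $\omega_1$ is equivalent to: for every $X \in W$ and every $A \subseteq X$, if $A \cap M_\alpha \in W$ for every $\alpha$, then $A \in W$.

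For each $X \in W$, I would form an $\omega_1$-tree $T_X$ whose level-$\alpha$ nodes are pairs $(\alpha, A)$ with $A \subseteq X \cap M_\alpha$ and $A \in M_{\alpha+1}$, ordered by $(\alpha,A) \le (\beta,B)$ iff $\alpha \le \beta$ and $A = B \cap M_\alpha$; countability of each $M_\alpha$ makes each level countable. A cofinal branch of $T_X$ in any outer model gives a set $A^\ast \subseteq X$ with $A^\ast \cap M_\alpha \in W$ for every $\alpha$. Guessingness of $W$ in $V^{\mathbb{P}}$ forces every such $A^\ast$ to already lie in $W$, so cofinal branches of $T_X$ in $V^{\mathbb{P}}$ are exactly the \emph{trivial} ones produced by $B \in W \cap \wp(X)$ via $\alpha \mapsto B \cap M_\alpha$. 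These trivial branches block direct specialization, so I would refine $T_X$ to the subtree $T_X^\ast$ obtained by discarding nodes $(\alpha,A)$ for which $A = B \cap M_\alpha$ for some $B \in M_\alpha$ with $B \subseteq X$. In $V^{\mathbb{P}}$, $T_X^\ast$ has no uncountable chain: any such chain could be completed, using internal unboundedness, to a coherent system yielding a subset $A^\ast \subseteq X$ that is $W$-approximated but lies outside $W$, contradicting guessingness.

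The forcing $\dot{\mathbb{S}}(H^V_\theta)$ is then defined in $V^{\mathbb{P}}$ as the finite-support iteration of length $\omega_1$ of Baumgartner's tree-specialization forcing applied to each $T_X^\ast$ as $X$ ranges over $W$; each individual factor is c.c.c.\ because $T_X^\ast$ has no uncountable chain, and Solovay's preservation theorem keeps the full iteration c.c.c.\ through its length. After forcing with $\mathbb{P} \ast \dot{\mathbb{S}}(H^V_\theta)$, each $T_X^\ast$ is specialized and hence acquires no uncountable chain, and a fortiori no cofinal branch, in any further $\omega_1$-preserving extension. Any putative failure of guessingness of $W$ in such an extension would, by the reduction above, produce exactly such a chain; so $W$ remains guessing, i.e., is indestructibly guessing.

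The main obstacle is the design of the pruning: one must eliminate every trivial branch without losing the potential ``new'' branches, so that cofinal branches of $T_X^\ast$ in outer models correspond bijectively to genuine failures of guessingness of $W$, and then one must verify that $T_X^\ast$ has no uncountable \emph{chain} in $V^{\mathbb{P}}$ (the property actually needed for Baumgartner's forcing to be c.c.c.), not merely no cofinal branch. This is the step where the interplay between guessingness and internal unboundedness—via the filtration $\langle M_\alpha \rangle$ approximating every countable $z \in W$—is essential.
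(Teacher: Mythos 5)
Your overall strategy—specializing, with a c.c.c.\ forcing in $V^{\mathbb{P}}$, trees of countable approximations whose new uncountable chains would witness a failure of the approximation property for $W=H^V_\theta$—is exactly the route taken in the cited sources (Viale--Weiss and Proposition 4.4 of Cox--Krueger); the paper itself only imports the result. But your execution has a genuine gap at precisely the step you flag as ``the main obstacle,'' and the pruning you propose does not resolve it. The problem is the direction you do not check: that every failure of guessing in an outer $\omega_1$-preserving model produces an uncountable chain in the \emph{pruned} tree $T_X^\ast$. Suppose $A^\ast\subseteq X$ lies in some outer model, $A^\ast\notin W$, but $A^\ast\cap M_\alpha\in W$ for all $\alpha$. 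Nothing forces $A^\ast\cap M_\alpha$ to avoid the discarded nodes: it is entirely possible that for club many $\alpha$ there is some $B_\alpha\in M_\alpha\cap\wp(X)$ (varying with $\alpha$, with no single $B$ cohering them) such that $A^\ast\cap M_\alpha=B_\alpha\cap M_\alpha$. In that case the branch of $A^\ast$ lives inside the subtree $S_X:=\{(\alpha,B\cap M_\alpha): B\in M_\alpha\cap\wp(X)\}$ that you threw away, and specializing $T_X^\ast$ says nothing about it. Note that $S_X$ is an $\omega_1$-tree with countable levels whose cofinal branches in $V^{\mathbb{P}}$ are exactly those induced by $\wp(X)\cap W$; such a tree can perfectly well acquire new cofinal branches in an $\omega_1$-preserving extension (this is the Suslin/Kurepa phenomenon), and each such new branch is a failure of guessing that your forcing does not prevent. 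The standard fix is not to prune but to seal the \emph{whole} tree $T_X$ relative to its (at most $\omega_1$ many) ground-model cofinal branches, using the Baumgartner-style c.c.c.\ forcing that specializes a tree of height $\omega_1$ with $\le\omega_1$ cofinal branches ``off its branches,'' so that no new cofinal branch can appear in any $\omega_1$-preserving outer model. Your second footnoted worry—that $T_X^\ast$ must have no uncountable chain in $V^{\mathbb{P}}$—is actually the direction that \emph{does} work (an uncountable chain of surviving nodes yields, by guessing, some $A^\ast\in\wp(X)\cap W$, which lies in $M_{\alpha_0}$ for some $\alpha_0$ and hence its nodes are discarded from level $\alpha_0$ on); it is the converse that fails.

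Three smaller problems. First, internal unboundedness gives you a $\subseteq$-increasing chain $\langle M_\alpha\rangle$ of elements of $W\cap[W]^\omega$ that is cofinal in $[W]^\omega$, but not a \emph{continuous} filtration all of whose terms lie in $W$—that would be internal clubness, which GIU models need not satisfy (indeed in this paper the internal part of $H^V_\theta$ is often arranged to be empty). Drop continuity; nothing else in the argument needs it. Second, requiring the level-$\alpha$ nodes to satisfy $A\in M_{\alpha+1}$ can exclude a genuine new branch from $T_X$ altogether: you only know $A^\ast\cap M_\alpha\in W=\bigcup_\beta M_\beta$, not that it enters the chain by stage $\alpha+1$. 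Third, for the length-$\omega_1$ finite-support \emph{iteration} you need each $T_{X_\xi}^\ast$ to still have no uncountable chain in the intermediate extension, and your only argument for chainlessness uses the guessing property of $W$ there—which is not yet known to hold; use a finite-support \emph{product} instead (the product of the specializing posets is the specializing poset of the disjoint union of the trees, which has no uncountable chain in $V^{\mathbb{P}}$, so the whole product is c.c.c.\ at once), or prove the c.c.c.\ of the combined forcing directly as in the cited sources.
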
 

\begin{corollary}
Suppose $W \prec H_{(2^\theta)^+}$, $W \in \wp^*_{\omega_2}(V)$, and $\mathbb{P} \in W$ is a poset forcing $H^V_\theta \in \text{GIU}$.   Let $\dot{\mathbb{S}}(H^V_\theta)$ be as in the conclusion of Theorem \ref{thm_Sealing}.  If there exists a $\big(W,\mathbb{P}*\dot{\mathbb{S}}(H^V_\theta)\big)$-generic filter, then $W \cap H_\theta$ is indestructibly guessing.
\end{corollary}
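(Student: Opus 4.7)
The plan is to transfer the ``indestructibly guessing'' property across the Mostowski collapse of $W$: since the hypothesized $\bigl(W,\mathbb{P}*\dot{\mathbb{S}}(H^V_\theta)\bigr)$-generic filter becomes fully generic over the collapse $H_W$, Theorem \ref{thm_Sealing} can be invoked inside the collapsed universe and then lifted back up. The conclusion should then be essentially bookkeeping around $\omega_1$-preservation and wellfoundedness.

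In detail, I would first let $\sigma_W\colon H_W\to W$ be the inverse Mostowski collapse and set $\bar{\mathbb{P}}*\dot{\bar{\mathbb{S}}}:=\sigma_W^{-1}\bigl(\mathbb{P}*\dot{\mathbb{S}}(H^V_\theta)\bigr)$ and $\bar{G}*\bar{H}:=\sigma_W^{-1}[G*H]$, where $G*H$ is the hypothesized partial generic. By the standard fact recorded in Section \ref{sec_ForcingAxiomPrelims}, $\bar{G}*\bar{H}$ is genuinely $H_W$-generic for $\bar{\mathbb{P}}*\dot{\bar{\mathbb{S}}}$, and $\sigma_W^{-1}(H^V_\theta)$ is the transitive collapse $H_{W\cap H_\theta}$ of $W\cap H_\theta$. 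Next, elementarity of $\sigma_W$ transports Theorem \ref{thm_Sealing} into $H_W$: inside $H_W$, the poset $\bar{\mathbb{P}}*\dot{\bar{\mathbb{S}}}$ preserves $\omega_1$ and forces $H_{W\cap H_\theta}$ to be indestructibly guessing. By the forcing theorem applied in $H_W$, the transitive $\text{ZF}^-$ set $H_W[\bar{G}*\bar{H}]$ --- which is an element of $H^V_{\omega_2}$ --- has $\omega_1^{H_W[\bar{G}*\bar{H}]}=\omega_1^V$ and models ``$H_{W\cap H_\theta}$ is indestructibly guessing''. Finally, I would verify indestructibility of $W\cap H_\theta$ directly from the definition: given any $\text{ZF}^-$ model $N$ with $H^V_{\theta'}$ in its wellfounded part (for $\theta'$ the least regular with $W\cap H_\theta\in H^V_{\theta'}$) and $\omega_1^N=\omega_1^V$, the fact that $|W\cap H_\theta|=\omega_1$ forces $\theta'\le\omega_2$, so $H_W[\bar{G}*\bar{H}]$ and all its elements lie inside the wellfounded part of $N$. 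Consequently $N$ meets the preconditions for the indestructible-guessing statement already witnessed inside $H_W[\bar{G}*\bar{H}]$, and therefore $N$ sees $H_{W\cap H_\theta}$, equivalently $W\cap H_\theta$, as a guessing model.

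The only delicate point --- and the step I would watch carefully --- is the verification that any $\text{ZF}^-$ model $N$ relevant to the indestructibility of $W\cap H_\theta$ in $V$ also qualifies as an outer $\text{ZF}^-$ model in the sense required by the indestructibility that is already established inside $H_W[\bar{G}*\bar{H}]$. This boils down to confirming $H_W[\bar{G}*\bar{H}]\in H^V_{\omega_2}$ (immediate from its cardinality) and $\omega_1^{H_W[\bar{G}*\bar{H}]}=\omega_1^V$; the latter rests on the elementary image of the $\omega_1$-preservation properties of $\mathbb{P}$ (built into the GIU hypothesis) and of $\dot{\mathbb{S}}(H^V_\theta)$ (c.c.c.), carried through the forcing theorem applied inside $H_W$.
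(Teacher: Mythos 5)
Your argument is correct and follows essentially the same route as the paper's proof: collapse $W$, observe that the collapsed filter is fully generic over $H_W$, apply Theorem \ref{thm_Sealing} inside $H_W$ to get that $H_W[\bar{G}*\bar{H}]$ believes $\pi[W\cap H_\theta]$ is indestructibly guessing, and transfer this up to $V$; you merely unpack the ``upward absoluteness'' step that the paper dispatches in one line. One inaccuracy worth noting: your claim that $|W\cap H_\theta|=\omega_1$ forces $\theta'\le\omega_2$ is false whenever $\theta>\omega_2$, since $W\cap H_\theta$ contains ordinals of cardinality $\ge\omega_2$ and hence is not an element of $H^V_{\omega_2}$; but this is harmless, because all you actually need is that $H_W[\bar{G}*\bar{H}]$ is a transitive set of hereditary size $\omega_1$, hence an element of $H^V_{\omega_2}\subseteq H^V_{\theta'}$ (as $\theta'\ge\omega_2$ automatically), and therefore lies in the wellfounded part of any $N$ witnessing the indestructibility requirement.
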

\begin{proof}
Suppose $g*h$ is such a $W$-generic filter.  Let $\pi: W \to H_W$ be the transitive collapse of $W$, $\theta_W:=\pi(\theta)$, and $\bar{g}*\bar{h}:=\pi[g*h]$; clearly $\bar{g}*\bar{h}$ is generic over $H_W$.  Then by Theorem \ref{thm_Sealing}, $H_W[\bar{g}*\bar{h}] \models $ ``$(H_{\bar{\theta}})^{H_W}$ is indestructibly guessing."  This statement is upward absolute to $V$ (since $V$ is an outer model of $H_W[\bar{g}*\bar{h}]$ with the same $\omega_1$).  Finally, notice that $(H_{\bar{\theta}})^{H_W} = \pi[W \cap H_\theta]$.  So the transitive collapse of $W \cap H_\theta$ is indestructibly guessing, which implies that $W \cap H_\theta$ is indestructibly guessing.
\end{proof}

Given a partial order $\mathbb{P}$, a regular $\theta \ge |\mathbb{P}|^+$, and a condition $p \in \mathbb{P}$, we say that $\boldsymbol{p}$ \textbf{is an $\boldsymbol{(M,\mathbb{P})}$-strong master condition} iff there is a condition $p|M \in M \cap \mathbb{P}$ such that for every $r \in M$ that is stronger than $p|M$, $r$ is compatible in $\mathbb{P}$ with $p$ (the condition $p|M$ is not typically unique, and is called a \textbf{reduction of $\boldsymbol{p}$ into $\boldsymbol{M}$}).  A partial order $\mathbb{P}$ is \textbf{strongly proper on a stationary set} iff there is a stationary $S \subseteq [H_{|\mathbb{P}|^+}]^\omega$ such that for all but nonstationarily many $M \in S$ and every $p \in M$, there is a $p' \le p$ such that $p'$ is an $(M,\mathbb{P})$-strong master condition.  This concept was isolated by Mitchell, and its main use is:
\begin{fact}[Mitchell~\cite{MR2452816}]\label{fact_Mitchell}
If $\mathbb{P}$ is strongly proper on a stationary set, then it has the $\omega_1$ covering and approximation properties.
\end{fact}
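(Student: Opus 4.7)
The plan is to derive both the $\omega_1$-covering and $\omega_1$-approximation properties from a strong $M$-genericity consequence of strong master conditions, combined with the fact that $M \cap \mathbb{P}$ is countable in $V$ for countable $M \in S$.

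The key first step is to verify that if $M \prec H_\theta$ is countable with $\mathbb{P} \in M$, and $p$ is an $(M,\mathbb{P})$-strong master condition with reduction $p|M$, then $p$ forces that for every dense $D \in M$ of $\mathbb{P}$, the generic filter meets $D \cap M$. The argument uses elementarity of $M$ to witness density below $p|M$ inside $M$ (the set $\{r \in M \cap \mathbb{P} : r \le p|M \text{ and } r \le a \text{ for some } a \in D \cap M\}$ is dense below $p|M$ in $M \cap \mathbb{P}$), together with the reduction property from the definition (every such $r$ is compatible with $p$). An immediate corollary is that $p$ forces $\dot{x}^{\dot G} \in M$ for every name $\dot{x} \in M$ for an ordinal. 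Using this, $\omega_1$-covering is immediate: given a name $\dot{b}$ for a countable set of ordinals and $p_0 \in \mathbb{P}$, by stationarity of $S$ choose countable $M \in S$ containing $\dot{b}, p_0, \mathbb{P}$ and take strong master $p^* \le p_0$; then a name $\dot{f} \in M$ for an enumeration $\omega \to \dot{b}$ exists by elementarity, and the corollary yields $p^* \Vdash \text{range}(\dot{f}) \subseteq M \cap \text{ORD}$, a countable $V$-set.

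For $\omega_1$-approximation, suppose $\dot{A} \subseteq X$ is forced to be $\omega_1$-approximated, and assume for contradiction that $p_0 \Vdash \dot{A} \notin V$. Pick $M \in S$ countable with $\dot{A}, p_0, X \in M$, and $p^* \le p_0$ strong master for $M$; the approximation hypothesis gives $p^* \Vdash \dot{A} \cap M \in V$. The key observation from strong master is that no $q \in M \cap \mathbb{P}$ below $p^*|M$ can force $\dot{A} = B$ for any $V$-set $B$: a common extension with $p^*$ (guaranteed by strong master) would force both $\dot{A} = B \in V$ and $\dot{A} \notin V$. Hence every such $q$ fails to decide $\dot{A}$ on some ordinal, and by elementarity of $M$ splitting witnesses can be chosen in $M$. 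I would then build inside $M$ a binary splitting tree $\{q_s : s \in 2^{<\omega}\} \subseteq M \cap \mathbb{P}$ below $p^*|M$, with splitting ordinals $\{\alpha_s\} \subseteq M$; for each branch $b$ the chain $\{q_{b\restriction n}\} \cup \{p^*\}$ is directed (finite subsets have common lower bounds by strong master), hence extends to a $V$-generic containing $p^*$ and realizing a branch-determined value of $\dot{A} \cap \{\alpha_s\}$. This yields $2^{\aleph_0}$ distinct realized values. Step 1 expresses $\dot{A} \cap M$ below $p^*$ as the evaluation of a name in the countable subposet $M \cap \mathbb{P}$, which combined with the approximation hypothesis gives a countable upper bound on the realized values below $p^*$, contradicting the tree's $2^{\aleph_0}$.

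The main obstacle will be the approximation step, specifically nailing down the countable upper bound on realized values of $\dot{A} \cap M$ below $p^*$ to contradict the tree. The subtlety is that approximation alone only guarantees values lie in $V$ — a priori up to $2^{\aleph_0}$ many — so one must carefully exploit both Step 1's identification of $\dot{A} \cap M$ with an $(M \cap \mathbb{P})$-name evaluation and the countability of $M \cap \mathbb{P}$ in $V$ to bring the bound down to countable. This is the heart of Mitchell's theorem, and it is where the strong master reduction plays its essential role beyond what ordinary properness would provide.
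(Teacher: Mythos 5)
The paper does not prove this Fact (it is quoted from Mitchell with a citation), so I am comparing your proposal against Mitchell's argument. Your covering argument is essentially the standard one, but note a gap that already affects your Step 1: from a \emph{single} reduction $p|M$ of $p$, knowing that $E = \{r \in M \cap \mathbb{P} : r \le p|M \text{ and } r \le a \text{ for some } a \in D \cap M\}$ is dense below $p|M$ in $M\cap\mathbb{P}$ and that each $r \in E$ is compatible with $p$ does \emph{not} yield that $p$ forces the generic to meet $D \cap M$. For that you need $D \cap M$ to be predense below every $p' \le p$, i.e., you need every extension of $p$ to admit a reduction (equivalently: every dense subset of $M\cap\mathbb{P}$ is predense below $p$). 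That is the standard meaning of a strongly $M$-generic condition, it is what Mitchell uses, and with it Step 1 and covering go through as you describe.

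The approximation half has a genuine gap, exactly where you flag it, and the counting strategy cannot be repaired. First, a countable descending chain $\{q_{b\restriction n}\}_n$ need not be contained in any $V$-generic filter (in $\mathrm{Add}(\omega)$ take $q_n = x\restriction n$ for a ground-model real $x$), so the "$2^{\aleph_0}$ realized values" are not realized. More fundamentally, the approximation hypothesis plus the countability of $M\cap\mathbb{P}$ bounds the number of possible values of $\dot A \cap M$ below $p^*$ only by $2^{\aleph_0}$ (one candidate per filter on the countable poset $M\cap\mathbb{P}$, each merely required to lie in $V$), which matches the number of branches of your tree; there is no route to a countable bound. Mitchell's argument is not a counting argument: extend the strong master condition $p^*$ to some $q$ with $q \Vdash \dot A \cap \check M = \check B$, $B \in V$ (possible by the approximation hypothesis); let $r \in M\cap\mathbb{P}$ be a reduction of $q$ with $r \le p_0$ (arrangeable since $p_0 \in M$). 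Every $s \le r$ with $s \in M$ is compatible with $q$, so any such $s$ deciding "$x\in\dot A$" for $x \in X\cap M$ must agree with $B$; since densely many conditions decide each such $x$, elementarity gives $B = B^* \cap M$ where $B^* := \{x \in X : \exists s\le r\ (s\Vdash x\in\dot A)\}$ is a ground-model set belonging to $M$. But $r \le p_0 \Vdash \dot A \notin \check V$, hence $r \Vdash \dot A \ne \check{B^*}$, so some $s \le r$ decides some "$x\in\dot A$" opposite to $B^*$; by elementarity such $s$ and $x$ exist in $M$, contradicting the agreement just established. Note that this again uses that the \emph{extension} $q$ of $p^*$ has a reduction, i.e., the stronger form of strong genericity; the reduction, not a cardinality count, is what does the work.
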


There are two kinds of posets that are strongly proper on a stationary set that we will use in this paper:  adding a Cohen real, and the following club-shooting poset, which is the ``continuous" version of Todorcevic's $\in$-collapse.  This poset is a special case of Neeman's ``decorated" poset from \cite{MR3201836}.  The role of the $f$ in Definition \ref{def_DecoratedPoset} is just to ensure that the $\in$-chain of elementary submodels added by the first coordinate is $\subseteq$-continuous.\footnote{Neeman's forcing is designed to preserve $\omega_1$ and $\theta$; we do not need preservation of $\theta$ for our purposes.}
\begin{definition}\label{def_DecoratedPoset}
Let $\theta \ge \omega_2$ be regular, and $X$ a stationary subset of $[H_\theta]^\omega$, which we will without loss of generality assume consists only of elementary submodels of $(H_\theta,\in)$.  The poset $\mathbb{C}^{\text{fin}}_{\text{dec}}(X)$ is the collection of all pairs of the form $(\mathcal{M}, f)$ where:
\begin{itemize}
 \item $\mathcal{M}$ is a finite subset of $X$, and for every $M, N \in \mathcal{M}$ with $M \ne N$, either $M \in N$ or $N \in M$.  We let $M_0, M_1, M_2, \dots, M_k$ be the unique enumeration of $\mathcal{M}$ such that $M_i \in M_{i+1}$ for every $i < k$.
 \item $f$ is a function from $\mathcal{M} \to H_\theta$, and has the property that $f(M_i) \in M_{i+1}$ for every $i < k$. 
\end{itemize}

The ordering is defined by:  $(\mathcal{N},h) \le (\mathcal{M},f)$ iff $\mathcal{N} \supseteq \mathcal{M}$ and $h(M) \supseteq f(M)$ for every $M \in \mathcal{M}$.
\end{definition}

\begin{fact}[similar to Neeman~\cite{MR3201836}, Section 4]\label{fact_DecoratedPosetFacts}
Suppose $X$ is a stationary subset of $[H_\theta]^\omega$ where $\theta \ge \omega_2$ is regular.  Then:
\begin{enumerate}
 \item  $\mathbb{C}^{\text{fin}}_{\text{dec}}(X)$ is strongly proper on a stationary set.  In particular, if $N \prec H_{(2^\theta)^+}$ is countable, $N \cap H_\theta \in X$, and $(\mathcal{M},f)$ is a condition in $N$, then $(\mathcal{M} \cup \{ N \cap H_\theta \}, f)$ is an $(N,\mathbb{C}^{\text{fin}}_{\text{dec}}(X))$-strong master condition.  
 \item If $G$ is generic over $V$ for $\mathbb{C}^{\text{fin}}_{\text{dec}}(X)$, then $\bigcup G$ is an $\in$-increasing filtration of $H^V_\theta$ of length $\omega_1$ consisting entirely of members of $X$.   Moreover, if $X_0 \in V$ was a stationary subset of $X$ and $\langle Q_i \ : \ i < \omega_1 \rangle \in V[G]$ is the generic filtration, then there are stationarily many $i < \omega_1$ such that $Q_i \in X_0$.

\end{enumerate}

In particular, $\mathbb{C}^{\text{fin}}_{\text{dec}}(X)$ has the $\omega_1$ covering and approximation properties, and forces $|H^V_\theta|=\omega_1$.
\end{fact}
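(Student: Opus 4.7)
The plan is to handle part (1) first by a direct construction of the master condition, and then deduce part (2) via genericity and density arguments built on top. For part (1), given a countable $N \prec H_{(2^\theta)^+}$ with $N \cap H_\theta \in X$ and a condition $(\mathcal{M}, f) \in N$, I would show that $p^* := (\mathcal{M} \cup \{N \cap H_\theta\}, \tilde f)$ is an $(N, \mathbb{C}^{\text{fin}}_{\text{dec}}(X))$-strong master condition with reduction $(\mathcal{M}, f)$, where $\tilde f$ extends $f$ by sending $N \cap H_\theta$ to an arbitrary value (e.g.\ $\emptyset$). The essential observation is that since $(\mathcal{M}, f) \in N$ is a hereditarily finite parameter, both $\mathcal{M}$ and $\text{ran}(f)$ lie inside $N \cap H_\theta$; so stacking $N \cap H_\theta$ on top of $\mathcal{M}$ preserves the $\in$-chain structure and satisfies the decoration constraint at the new non-top entry $\max \mathcal{M}$ (whose $f$-value must lie in $N \cap H_\theta$, which it does). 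For the strong-reduction property, given any $(\mathcal{N}, h) \in N$ below $(\mathcal{M}, f)$, the same observation gives $\mathcal{N}, \text{ran}(h) \subseteq N \cap H_\theta$, so $(\mathcal{N} \cup \{N \cap H_\theta\}, \tilde h)$ is a condition and a common extension of $(\mathcal{N}, h)$ and $p^*$.

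For part (2), let $G$ be $V$-generic, write $\mathcal{F} := \bigcup \{ \mathcal{M} : \exists f \ (\mathcal{M}, f) \in G \}$, and enumerate $\mathcal{F}$ in $\in$-order as $\langle Q_\xi : \xi < \alpha\rangle$. By part (1) and Fact~\ref{fact_Mitchell}, $\mathbb{C}^{\text{fin}}_{\text{dec}}(X)$ has the $\omega_1$-covering and approximation properties, hence $\omega_1$ is preserved while $|H^V_\theta|$ is collapsed to $\omega_1$. A standard density argument---extending any condition by appending a countable $M' \in X$ above $\max \mathcal{M}$ with $\{x, \mathcal{M}, f\} \subseteq M'$, which exists by stationarity of $X$ in $[H_\theta]^\omega$ inside a suitable elementary submodel---shows $\bigcup \mathcal{F} = H^V_\theta$, so $\alpha = \omega_1$. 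The chain is $\subseteq$-increasing because $Q_i \in Q_{i+1}$ together with $Q_{i+1} \prec H_\theta$ seeing $Q_i$ as countable forces $Q_i \subseteq Q_{i+1}$. For the stationarity clause, given stationary $X_0 \subseteq X$ in $V$ and a $\mathbb{C}^{\text{fin}}_{\text{dec}}(X)$-name $\dot C$ for a club in $\omega_1$, stationarity of $X_0$ lets me pick a countable $N \prec H_{(2^\theta)^+}$ with $\dot C, \mathbb{C}^{\text{fin}}_{\text{dec}}(X), X_0 \in N$ and $N \cap H_\theta \in X_0$; then the master condition from part (1) forces $N \cap H_\theta \in \mathcal{F}$, while properness (a consequence of strong properness) forces $N \cap \omega_1 \in \dot C_G$, yielding $Q_{N \cap \omega_1} = N \cap H_\theta \in X_0$.

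The delicate remaining point is $\subseteq$-continuity of the enumeration at limit $\lambda$, i.e.\ $Q_\lambda \subseteq \bigcup_{i < \lambda} Q_i$, and this is the main obstacle. This is precisely where the decoration $f$ earns its keep, following Neeman's modification in \cite{MR3201836}: for each $x \in Q_\lambda$, one must insert a new model $M^*$ with $x \in M^* \in Q_\lambda$ via a density argument, compatibly with any previously committed $f$-values. The finite-support monotone growth of the $f$-values (the ordering requires $h(M) \supseteq f(M)$, with $f(M)$ remaining in the \emph{next} model above) is exactly what gives the needed flexibility---one can enlarge $f(Q_i)$ inside $Q_\lambda$ while inserting $M^*$ between $Q_i$ and $Q_\lambda$ with $f(Q_i) \in M^*$. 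Verifying that this density argument coheres with all side constraints and does not force us out of the class $X$ requires going through the Neeman-style continuity argument in detail, but is the only step beyond the routine.
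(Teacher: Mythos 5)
The paper does not actually prove this statement---it is recorded as a Fact with a pointer to Neeman's paper---so there is no in-paper argument to compare against, and your reconstruction is essentially the intended one. Your part (1) is complete and correct: because $(\mathcal{M},f)$ and any $(\mathcal{N},h)\le(\mathcal{M},f)$ lying in $N$ are finite objects all of whose components belong to $N\cap H_\theta$, stacking $N\cap H_\theta$ on top (with an arbitrary decoration value for the new top model, which is unconstrained) yields a condition, and the same stacking applied to $(\mathcal{N},h)$ produces the required common extension; this gives strong properness on the stationary set of countable $N\prec H_{(2^\theta)^+}$ with $N\cap H_\theta\in X$, and hence the covering/approximation properties via Fact \ref{fact_Mitchell}. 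The density, length, $\subseteq$-increase, and stationary-preservation arguments in part (2) are also right (for the last one, note that the index of $N\cap H_\theta$ in the filtration is $N\cap\omega_1$ on a club of good indices, which is all you need).

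The one point where your sketch is not quite the right mechanism is continuity at limits. You cannot in general ``insert a new model $M^*$ with $x\in M^*\in Q_\lambda$'': $X$ need not be an element of $Q_\lambda$, and even when it is, $X\cap Q_\lambda$ need not contain any model with the required finite parameters, so that density set can fail to be dense. The working argument is the one your final sentence gestures at, but with the roles reversed: given $x\in Q_\lambda$ and a condition in which $M$ is the immediate $\in$-predecessor of $Q_\lambda$, you \emph{only} enlarge the decoration, replacing $h(M)$ by $h(M)\cup\{x\}$ (this stays in $Q_\lambda$ since $h(M),x\in Q_\lambda\prec H_\theta$); you do not insert any model yourself. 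The constraint $h'(M)\in M'$ for the immediate successor then forces $x$ into whatever model the \emph{generic} later interpolates directly above $M$ below $Q_\lambda$---in particular into $Q_{i+1}$ where $M=Q_i$ and $i+1<\lambda$. For the final step ($x\in h'(Q_i)\in Q_{i+1}$ implies $x\in Q_{i+1}$) one uses that the decorations involved are finite (or at least countable) sets, so that membership in the elementary model $Q_{i+1}$ gives containment; this is implicit in Definition \ref{def_DecoratedPoset} as intended (Neeman's decorations are finite sets) and is worth stating explicitly if you write the argument out.
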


We will also need:
\begin{lemma}\label{lem_ClubThruVisProper}
If $\mathbb{P}$ is proper and $\theta$ is a regular uncountable cardinal, then $\mathbb{P}*\dot{\mathbb{C}}^{\text{fin}}_{\text{dec}}\big( V \cap [\theta]^\omega \big)$ is proper.
\end{lemma}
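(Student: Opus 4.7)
The plan is to exploit the observation that the iterand $\dot{\mathbb{C}}^{\text{fin}}_{\text{dec}}(V \cap [\theta]^\omega)$ is, in effect, a ground-model poset: each condition $(\mathcal{M}, f)$ consists of a finite $\in$-chain $\mathcal{M}$ of ground-model countable submodels together with a finite decoration $f$ whose constrained values lie inside those models, so (modulo the harmless convention, as in Neeman's setup, that the decoration on the top model also lies in a ground-model set) the conditions and ordering sit in $V$. Consequently $\mathbb{P} * \dot{\mathbb{C}}^{\text{fin}}_{\text{dec}}(V \cap [\theta]^\omega)$ is forcing-equivalent to the product $\mathbb{P} \times \mathbb{C}$, where $\mathbb{C} := \mathbb{C}^{\text{fin}}_{\text{dec}}(V \cap [\theta]^\omega)^V$ is a $V$-poset that is strongly proper on a stationary set by Fact \ref{fact_DecoratedPosetFacts}.

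To prove properness of the product, I would fix a sufficiently large regular $\chi$ and a countable $N \prec H^V_\chi$ containing $\mathbb{P}$, $\mathbb{C}$, and a condition $(p, (\mathcal{M}, f)) \in N$. Properness of $\mathbb{P}$ yields a $p' \le p$ that is an $(N, \mathbb{P})$-master condition. Since $N \in V$ gives $N \cap H_\theta \in V \cap [H_\theta]^\omega$, Fact \ref{fact_DecoratedPosetFacts}(1) supplies $(\mathcal{M}', f') := (\mathcal{M} \cup \{N \cap H_\theta\}, f)$ as an $(N, \mathbb{C})$-\emph{strong} master condition with reduction $(\mathcal{M}, f) \in N$; as is standard for decorated $\in$-collapse forcings, any further extension $e \le (\mathcal{M}', f')$ also admits a canonical reduction $e|N \in N \cap \mathbb{C}$ such that every $r \le e|N$ with $r \in N$ is compatible with $e$.

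The final step is to verify that $(p', (\mathcal{M}', f'))$ is an $(N, \mathbb{P} \times \mathbb{C})$-master condition. Given a dense set $D \in N$ of $\mathbb{P} \times \mathbb{C}$ and any $(q, e) \le (p', (\mathcal{M}', f'))$, I would form
\[
D_1 \ := \ \{ r \in \mathbb{P} \ : \ \exists e^* \in \mathbb{C} \text{ with } e^* \le e|N \text{ and } (r, e^*) \in D \},
\]
which belongs to $N$ (being definable from $D, e|N \in N$) and is dense in $\mathbb{P}$ by applying density of $D$ to conditions of the form $(r, e|N)$. Since $q \le p'$ and $p'$ is an $(N, \mathbb{P})$-master condition, there exists $d \in D_1 \cap N$ compatible with $q$; by elementarity there is an $e^* \in N$ with $e^* \le e|N$ and $(d, e^*) \in D$; and by the strong-master-condition property of $(\mathcal{M}', f')$, this $e^*$ is compatible with $e$ in $\mathbb{C}$. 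Hence $(d, e^*) \in D \cap N$ is compatible with $(q, e)$, finishing the verification. The main delicacy will be the book-keeping for the reduction $e|N$ and its interaction with the top-model decoration in Neeman's poset, but this is purely a finite/syntactic matter and does not interfere with the master-condition argument above.
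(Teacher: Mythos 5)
Your argument is correct in its essentials, but it takes a genuinely different route from the paper's. The paper treats $\mathbb{P}*\dot{\mathbb{C}}^{\text{fin}}_{\text{dec}}(V\cap[\theta]^\omega)$ as an honest two-step iteration: it takes an $(N,\mathbb{P})$-master condition $p'\le p$, notes that $p'$ forces $N[\dot G_{\mathbb{P}}]\cap\theta=N\cap\theta\in V\cap[\theta]^\omega$, and then applies Fact \ref{fact_DecoratedPosetFacts} \emph{in the extension} to conclude that $(\dot{\mathcal M}\cup\{N\cap\theta\},\dot f)$ is a strong master condition for the \emph{extended} model $N[\dot G_{\mathbb{P}}]$; the standard two-step master-condition lemma then finishes. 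You instead collapse the iteration to a product and run a ``proper $\times$ strongly proper'' argument for the ground-model $N$ itself, using the reduction $e|N$ to translate a dense subset of the product into a dense subset $D_1$ of $\mathbb{P}$ lying in $N$. Your verification of density of $D_1$, of $D_1\in N$, and of the final compatibility is correct, and the decomposition makes transparent \emph{why} the lemma holds (the second factor behaves like a ground-model poset). What the paper's route buys is robustness: it never needs the iterand to be (densely equivalent to) a ground-model poset, only that $N\cap\theta$ lands in the relevant stationary set after forcing with $\mathbb{P}$; the same template therefore applies verbatim to names like $\dot X^\theta_T$ in Theorem \ref{thm_TwoPosets}, where the iterand genuinely depends on the generic.

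The one step you should not leave implicit is the very first one: the identification of the iteration with a product. As Definition \ref{def_DecoratedPoset} is written, the decoration on the top model of a condition is an unconstrained element of $H_\theta$ as computed in $V^{\mathbb{P}}$, so the iterand can acquire conditions that are not in $V$; only the non-top decorations are forced into ground-model countable sets. You wave this away as a ``harmless convention,'' and it is indeed harmless, but it requires an argument --- either adopt the convention that decorations are drawn from $V$ from the outset, or check that the conditions all of whose decorations lie in $V$ form a dense subset of the iterand in $V^{\mathbb{P}}$ (once a further model from $V\cap[\theta]^\omega$ is placed on top, the decoration beneath it must be a member of that ground-model set and hence lies in $V$). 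Since the entire reduction to a product, and with it your whole proof, rests on this identification, it deserves an explicit sentence rather than a parenthesis.
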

\begin{proof}
Fix a regular $\Omega$ with $\mathbb{P},\theta \in H_\Omega$.  Let $N \prec (H_\Omega,\in, \theta, \mathbb{P})$ be countable, and $\big( p, (\dot{\mathcal{M}}, \dot{f}) \big) \in N$ be a condition in the two-step iteration.  Since $\mathbb{P}$ is proper, there is a $p' \le p$ that is an $(N,\mathbb{P})$-master condition, so in particular $p'$ forces $ N[\dot{G}_{\mathbb{P}}] \cap \theta = N \cap \theta \in V \cap [\theta]^\omega$.  Then by Fact \ref{fact_DecoratedPosetFacts}, $p'$ forces that $(\dot{\mathcal{M}} \cup \{ N \cap \theta \}, \dot{f})$ is an $(N[\dot{G}_{\mathbb{P}}], \dot{\mathbb{C}}^{\text{fin}}_{\text{dec}}(V \cap [\theta]^\omega))$ (strong) master condition. Then $\big( p', (\dot{\mathcal{M}} \cup \{ N \cap \theta \}, \dot{f}) \big)$ is a master condition for $N$ that is stronger than $\big( p, (\dot{\mathcal{M}}, \dot{f}) \big) $.
\end{proof}

\subsection{A theorem of Gitik and Velickovic}

For any set $R$ and any $T \subseteq \omega_1$, $R \searrow T:= \{ z \in R \ : \ z \cap \omega_1 \in T \}$.  A set $R \subset [H_\theta]^\omega$ is called \textbf{projective stationary} iff $R \searrow T$ is stationary for every stationary $T \subseteq \omega_1$ (this concept was isolated in Feng-Jech~\cite{MR1668171}).  We make heavy use of the following theorem of Velickovic, which slightly improved an earlier theorem of Gitik~\cite{MR820120}:

\begin{theorem}[Velickovic~\cite{MR1174395}, Lemma 3.15]\label{thm_Velick}
Suppose $V \subset W$ are transitive ZFC models, $\mathbb{R}^V \ne \mathbb{R}^W$, and that $\lambda$ is a $W$-regular cardinal with $\lambda \ge \omega_2^W$.  Then in $W$, for every stationary $R \subseteq \lambda \cap \text{cof}(\omega)$ and every stationary $T \subseteq \omega_1$, there are stationarily many $z \in [\lambda]^\omega \setminus V$ such that $z \cap \omega_1 \in T$ and $\text{sup}(z) \in R$.  In particular, $[\lambda]^\omega \setminus V$ is projective stationary.
\end{theorem}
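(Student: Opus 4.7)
My plan is to reduce the theorem to the following existence statement: for every algebra $F : [\lambda]^{<\omega} \to \lambda$ in $W$, stationary $R \subseteq \lambda \cap \text{cof}^W(\omega)$, and stationary $T \subseteq \omega_1$, there is a countable $z \subseteq \lambda$ closed under $F$ with $z \cap \omega_1 \in T$, $\sup z \in R$, and $z \notin V$. Since every club in $[\lambda]^\omega$ is the set of $F$-closed countable subsets for some such $F$ (Kueker's characterization), this yields the ``stationarily many $z$'' conclusion with both the ordinal-reflection and countable-reflection constraints; the closing sentence of the theorem follows by specializing to $R := \lambda \cap \text{cof}^W(\omega)$.

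For the construction, I would fix a new real $r \in 2^\omega \cap W \setminus V$ and replace $R$ by its stationary subset of $F$-closed ordinals (so any $F$-closure of a subset of $\alpha \in R$ stays in $\alpha$). Working in $W$, take a countable elementary $N \prec (H_\theta^W, \in, F, R, T, r)$ with $\gamma := N \cap \omega_1 \in T$, pick $\alpha \in R \cap N$, and fix cofinal $\omega$-sequences $c : \omega \to \alpha$ and $d : \omega \to \gamma$ with $c, d \in N$. The baseline candidate for $z$ is the $F$-closure of $\text{rng}(c) \cup \text{rng}(d)$ inside $N$: $F$-closedness of $\alpha$ forces $z \subseteq \alpha$, so $\sup z = \alpha \in R$ and $z \cap \omega_1 = \gamma \in T$. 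What remains is to arrange $z \notin V$.

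I would split by whether $\text{cf}^V(\alpha) > \omega$ or $\text{cf}^V(\alpha) = \omega$. In the first case the baseline $z$ already works, since a countable $z \subseteq \alpha$ with $\sup z = \alpha$ is itself a cofinal $\omega$-sequence through $\alpha$, impossible in $V$. The second case --- which includes the characteristic situation where $W$ is a ccc extension of $V$ and all $W$-cofinalities agree with $V$'s --- is where the new real must be used actively. Pick $c \in V \cap N$ cofinal in $\alpha$ and define a perturbed sequence $c_r$ encoding $r$, for example by choosing $c_r(k) \in \{c(2k), c(2k+1)\}$ according to the $k^{\text{th}}$ bit of $r$, and seed $z$ with $c_r$ in place of $c$. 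The combinatorial properties of $z$ are preserved. If $z$ were in $V$, then from $z, c, F \in V$ one should recover $r$, contradicting $r \notin V$.

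The main obstacle in this second case is making the decoding robust against $F$-closure: a priori, closure under $F$ could insert both $c(2k)$ and $c(2k+1)$, or neither, into $z$, destroying the encoding. I would handle this either by (a) first thinning $c$ along a $V$-club of $F$-indiscernible intervals, so that $F$-closure of $\text{rng}(c_r)$ cannot pollute the decision intervals, or (b) replacing the two-element choice with a sufficiently widely dispersed finite-alphabet encoding that tolerates a bounded amount of $F$-generated noise. Once this robustness is established, $z \in V$ yields $r \in V$, contradicting our choice of $r$ and completing the proof.
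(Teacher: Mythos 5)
Your proposal correctly identifies the shape of the Gitik--Veli\v{c}kovi\'{c} argument (seed an $F$-closure with a sequence perturbed by a new real $r$, then argue that $z \in V$ would let $V$ decode $r$) and, to your credit, it also correctly identifies where the difficulty lies. But the difficulty is not overcome: the entire content of the theorem is the robustness of the encoding against closure under an arbitrary $F \in W$, and your two proposed remedies are placeholders rather than arguments. Remedy (a) asks for ``a $V$-club of $F$-indiscernible intervals,'' but $F$ is the algebra coding the club you must meet, and it lives in $W$, not in $V$; there is no reason $V$ contains any club of intervals that $F$ respects, and no reason such indiscernibles exist at all. Remedy (b) presumes the $F$-closure contributes only ``a bounded amount of noise,'' but the closure of a countable seed adds infinitely many new points with no control over how they interleave with your decision pairs $\{c(2k), c(2k+1)\}$. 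Getting around exactly this is the hard combinatorial core: Gitik does it with a delicate splitting construction, and Veli\v{c}kovi\'{c} (Lemma 3.15 of the cited paper) repackages it as a game in which Player II's winning strategy is what makes the two choices at each stage genuinely distinguishable from the point of view of $V$. Note that the paper itself does not reprove this; it cites Veli\v{c}kovi\'{c} and only explains how the extra clause $\sup(z) \in R$ falls out of his proof, by choosing the ambient model $X$ with $X \cap \lambda \in R$ and containing Player II's strategy.

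There are also two smaller slips worth flagging. First, you cannot have $d \in N$ with $d : \omega \to \gamma$ cofinal in $\gamma = N \cap \omega_1$: if $d \in N$ then $\sup(\operatorname{rng}(d)) \in N$, while $\gamma \notin N$. Second, the conclusion $z \cap \omega_1 \in T$ requires $z \cap \omega_1$ to literally be an ordinal of $T$ (an initial segment of $\omega_1$), not merely a set with supremum in $T$; the $F$-closure of $\operatorname{rng}(c) \cup \operatorname{rng}(d)$ will in general meet $\omega_1$ in a non-transitive set. The standard fix is to build $z$ as an increasing union of $F$-closed pieces $z_n$ with $\gamma_n := \sup(z_n \cap \omega_1) < \gamma$ and $\gamma_n \subseteq z_{n+1}$, so that $z \cap \omega_1 = \gamma$; this is compatible with the rest of your outline but must be built into the construction. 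Your easy case ($\operatorname{cf}^V(\alpha) > \omega$) is fine once that is done, but the main case remains open as written.
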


The statement of Theorem \ref{thm_Velick} is slightly stronger than the version in \cite{MR1174395}, which didn't mention the set $R$ (just the projective stationarity of $[\lambda]^\omega \setminus V$).  But a close examination of Velickovic's proof shows the statement above, because he proved that (in $W$) given any stationary $T \subseteq \omega_1$ and any function $F: [\lambda]^{<\omega} \to \lambda$, for all but nonstationarily many members of the set
\[
\{ X \subset H_{(2^\lambda)} \ : \ X \cap \lambda \in \lambda \cap \text{cof}(\omega) \},
\]
there is a $z \in [X \cap \lambda]^\omega$ such that $z$ is closed under $F$, $\text{sup}(z) = X \cap \lambda$, $z \notin V$, and $z \cap \omega_1 \in T$.   This will be the case for any $X$ in the displayed set that has, as an element, Player II's winning strategy in the game from page 272 of \cite{MR1174395}.  In particular, this is true for some $X$ such that $X \cap \lambda \in R$.

We also will use:
\begin{lemma}\label{lem_LiftStatToStatOrd}
Suppose $\lambda$ is an uncountable ordinal, $S$ is a stationary subset of $[\lambda]^\omega$, $\theta$ is a regular uncountable cardinal strictly larger than $\lambda$, and $R$ is a stationary subset of $\theta \cap \text{cof}(\omega)$.  Then
\[
\left\{ u \in [\theta]^\omega \ : \ \text{sup}(u) \in R \text{ and } u \cap \lambda \in S \right\}
\]
is stationary in $[\theta]^\omega$.
\end{lemma}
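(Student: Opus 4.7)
\textbf{Proof plan for Lemma \ref{lem_LiftStatToStatOrd}.}  Let $F : [\theta]^{<\omega} \to \theta$ be arbitrary; writing $C_F = \{u \in [\theta]^\omega : F[[u]^{<\omega}] \subseteq u\}$, it suffices to find $u \in C_F$ with $\sup(u) \in R$ and $u \cap \lambda \in S$.  The strategy is to first use the stationarity of $R$ inside $\theta$ to fix an ``ambient" ordinal $\alpha \in R$ above $\lambda$ that is closed under $F$, then fix a cofinal $\omega$-sequence $\vec{\alpha}$ in $\alpha$ sitting above $\lambda$, and finally use the stationarity of $S$ in $[\lambda]^\omega$ to find a countable $v \subseteq \lambda$ with $v \in S$ that absorbs all $F$-images of finite subsets of $v \cup \vec{\alpha}$ which happen to fall below $\lambda$.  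The desired $u$ will be the $F$-closure of $v \cup \vec{\alpha}$.

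More precisely, the steps are as follows.  First, set $D := \{\beta < \theta : F[[\beta]^{<\omega}] \subseteq \beta\}$, which is club in $\theta$ since $\theta$ is regular uncountable; using stationarity of $R$ and that $\theta > \lambda$, pick $\alpha \in R \cap D$ with $\alpha > \lambda$.  Second, choose an increasing sequence $\vec{\alpha} = \langle \alpha_n : n < \omega \rangle$ cofinal in $\alpha$ with $\alpha_0 > \lambda$ (possible since $\text{cf}(\alpha) = \omega$).  Third, for $s \in [\lambda]^{<\omega}$, define
\[
H'(s) := \text{cl}_F(s \cup \vec{\alpha}) \cap \lambda,
\]
where $\text{cl}_F(X)$ denotes the closure of $X$ under $F$; because $s \cup \vec{\alpha}$ is countable and $F$ is single-valued, $H'(s)$ is a countable subset of $\lambda$.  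Fourth, the set $C' := \{v \in [\lambda]^\omega : H'(s) \subseteq v \text{ for every } s \in [v]^{<\omega}\}$ is club in $[\lambda]^\omega$, so by stationarity of $S$ there is $v \in C' \cap S$.

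Finally, set $u := \text{cl}_F(v \cup \vec{\alpha})$.  Then $u$ is a countable subset of $\theta$ closed under $F$.  Because $\alpha \in D$ and $v \cup \vec{\alpha} \subseteq \alpha$, we have $u \subseteq \alpha$, so $\sup(u) = \sup(\vec{\alpha}) = \alpha \in R$.  For the intersection with $\lambda$, observe that $u = \bigcup_{s \in [v]^{<\omega}} \text{cl}_F(s \cup \vec{\alpha})$ (every finite subset of $v \cup \vec{\alpha}$ lies in some $s \cup \vec{\alpha}$ with $s \in [v]^{<\omega}$), so $u \cap \lambda = \bigcup_{s \in [v]^{<\omega}} H'(s) \subseteq v$ by choice of $v \in C'$, and the reverse inclusion is trivial; hence $u \cap \lambda = v \in S$.

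The only real subtlety is verifying that $H'$ really does capture, via a single countable-valued operation on $[\lambda]^{<\omega}$, all the ways that iterative $F$-closure of $v \cup \vec{\alpha}$ can produce new elements below $\lambda$.  The key observation making this work is that $\vec{\alpha}$ is fixed once and for all before $v$ is chosen, so $H'$ depends only on $s$ (not on $v$), and thus generates a genuine club in $[\lambda]^\omega$ rather than a moving target.  Once that is granted, the rest is bookkeeping.
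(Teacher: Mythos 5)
Your proof is correct and follows essentially the same route as the paper's: fix $\alpha \in R$ above $\lambda$ closed under the given function(s), adjoin a cofinal $\omega$-sequence, and use stationarity of $S$ to find $v \in S$ whose closure meets $\lambda$ in exactly $v$. The paper packages steps three and four as a Skolem-hull/``Fodor'' argument for a structure $\mathfrak{B}$ on $\alpha$ with built-in cofinality functions, whereas you verify the relevant club $C'$ by hand via the operation $H'$; the content is the same.
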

\begin{proof}
Let $\mathfrak{A} = (\theta,\dots)$ be any first order structure on $\theta$ in a countable language; we need to find a countable elementary substructure of $\mathfrak{A}$ whose supremum is in $R$ and whose intersection with $\lambda$ is in $S$. Fix an $M \prec \mathfrak{A}$ of size $<\theta$ such that $\lambda \subseteq M$ and $M \cap \theta \in R$.  This is possible because $\lambda < \theta$ and $R$ is a stationary subset of $\theta \cap \text{cof}(\omega)$.  Fix a Skolemized structure $
\mathfrak{B}$ with universe $M$ in a countable language extending $\mathfrak{A} \restriction M$ such that any elementary substructure of $\mathfrak{B}$ is cofinal in $M \cap \theta$; this is possible because $M \cap \theta$ is $\omega$-cofinal.  Since $S$ is stationary in $[\lambda]^\omega$ and $\lambda \subseteq M$, Fodor's lemma implies that there is a $z \in S$ such that
\[
\widetilde{z} \cap \lambda = z.
\]
where $\widetilde{z}$ is the $\mathfrak{B}$-Skolem hull of $z$.  Then $\widetilde{z}$ is elementary in $\mathfrak{B}$ and hence $\text{sup}(\widetilde{z})  = M \cap \theta \in R$.  Since $\widetilde{z} \prec \mathfrak{B}$, $\mathfrak{B}$ extends $\mathfrak{A} \restriction M$, and $M \prec \mathfrak{A}$, then $\widetilde{z} \prec \mathfrak{A}$.  And $\widetilde{z} \cap \lambda = z \in S$.  

\end{proof}

\section{Controlling the internal part of a guessing model}\label{sec_ControlInternal}

In this section we prove some facts that will be used in most of the proofs of the paper.  The tight control over the internal and external parts is mainly needed for Theorem \ref{thm_Cox_DontNeedPlus2}.  A poset is called \textbf{$\boldsymbol{\omega_1}$-SSP} if it preserves all stationary subsets of $\omega_1$.  Given uncountable sets $H \subset H'$ and a subset $S \subseteq [H]^\omega$, $\boldsymbol{\textbf{Lift}^{H'}(S)}$ denotes the set $\{ z \in [H']^\omega \ : \ z \cap H \in S\}$; it is a standard fact that if $S$ is stationary, then so is its lifting.

\begin{theorem}\label{thm_TwoPosets}
Fix a (possibly nonstationary) subset $T$ of $\omega_1$, and a regular $\theta \ge \omega_2$.  
\begin{enumerate}
 \item\label{item_SemiproperPoset} There is an $\omega_1$-SSP poset $\mathbb{Q}^{\omega_1-\text{SSP}}_{T,\theta}$ that forces the following for every $V$-regular cardinal $\mu \in [\omega^V_2,\theta]$: $H^V_\mu$ has size $\omega_1$, is indestructibly guessing, internally unbounded, and its internal part is \textbf{exactly} (mod $\text{NS}_{\omega_1})$ the set $T$.  Also, every stationary subset of $\theta \cap \text{cof}(\omega)$ from the ground model remains stationary in the extension.

 \item\label{item_ProperPoset} There is a proper poset $\mathbb{Q}^{\text{proper}}_{T, \theta}$ that forces the following for every $V$-regular cardinal $\mu \in [\omega^V_2,\theta]$: $H^V_\mu$ has size $\omega_1$, is indestructibly guessing, and its internal part contains $T$.  Moreover, if $T$ was costationary in $V$, then both the internal and external parts of $H^V_\mu$ are forced to have stationary intersection with $T^c$.
 
 \textbf{In particular:}  if $T$ was stationary and costationary, then $\mathbb{Q}^{\text{proper}}_{T, \theta}$ forces ``$\text{int}(H^V_\mu)$ contains the stationary set $T$, and $\text{ext}(H^V_\mu)$ is a stationary subset of $T^c$" for every $V$-regular $\mu \in [\omega_2^V,\theta]$. 
 
\end{enumerate}
\end{theorem}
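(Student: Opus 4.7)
The plan: both posets will be built as two-step iterations $\mathbb{P}\ast\dot{\mathbb{S}}(H^V_\theta)$, where $\dot{\mathbb{S}}$ is the c.c.c.\ sealing forcing of Theorem \ref{thm_Sealing}. Being c.c.c., $\dot{\mathbb{S}}$ preserves properness, $\omega_1$-SSP, the $\omega_1$-covering and approximation properties, stationary subsets of $\omega_1$ and of $\theta\cap\text{cof}(\omega)$, and it does not alter the internal part of any $H^V_\mu$. So it suffices to design $\mathbb{P}$ collapsing $H^V_\theta$ to size $\omega_1$ such that (i) $\mathbb{P}$ has the $\omega_1$-covering and approximation properties, so $H^V_\theta$ becomes guessing in $V^{\mathbb{P}}$ (and internally unbounded, by $\omega_1$-covering of $(V,V^{\mathbb{P}})$); (ii) $\mathbb{P}$ forces the prescribed internal-part control on $H^V_\theta$; and (iii) $\mathbb{P}$ is proper in Part (2) and $\omega_1$-SSP in Part (1). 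By Observation \ref{obs_InternalPartProjects}, control of the internal part at $\theta$ transfers to each regular $\mu \in [\omega_2^V,\theta]$, provided the objects witnessing external-part-ness are sufficiently small; Cohen reals (or countable subsets of $\omega_2^V$) suffice.

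The template for $\mathbb{P}$ is a variant of Neeman's decorated poset from Definition \ref{def_DecoratedPoset}. The governing observation is that, since filtrations of $H^V_\theta$ in $V^{\mathbb{P}}$ agree mod NS, the internal part equals $\{i : M_i \in H^V_\theta\}$ mod NS for any generic filtration $\langle M_i \rangle$. Thus to place $i$ in the internal part I must force the eventual stage-$i$ filtration model into $V$; to place $i$ in the external part I must force that model to contain something outside $V$, for instance a countable subset of $\omega_2^V$ coded by a Cohen real.

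For Part (2) the internal part needs only to contain $T$, and if $T$ is costationary both parts must meet $T^c$ stationarily. I would take $\mathbb{P}$ to be a hybrid decorated poset whose side-condition models at coordinates $i \in T \cup S$ are drawn from $V\cap[H^V_\theta]^\omega$ (with $S \subseteq T^c$ a fixed stationary set chosen so that $T^c\setminus S$ is also stationary, when $T$ is costationary), and whose models at coordinates $i\in T^c\setminus S$ are drawn from a Cohen-coded family of new countable submodels of $H^V_\theta$. This yields internal part $=T\cup S$ and external part $=T^c\setminus S$, as required. Properness of $\mathbb{P}$ follows from the Neeman framework (Fact \ref{fact_DecoratedPosetFacts}) combined with a Cohen-type master-condition argument in the spirit of Lemma \ref{lem_ClubThruVisProper}; the $\omega_1$-cover and approximation properties come from strong properness on a stationary set (Fact \ref{fact_Mitchell}) together with Fact \ref{fact_CoverApproxClosed2step} applied to the Cohen component.

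For Part (1) the internal part must equal $T$ exactly modulo NS, so \emph{every} coordinate in $T^c$ must carry a genuinely new model, and properness is out of reach in general. I would use the decorated poset whose side-condition models at coordinates $i \in T$ are drawn from $V\cap[H^V_\theta]^\omega$ and whose models at $i \in T^c$ are drawn from a Cohen-coded family of new countable submodels of $H^V_\theta$. The main obstacle --- and the technical heart of the argument --- is proving $\omega_1$-SSP: given a stationary $E \subseteq \omega_1$, a condition, and a name for a club in $\omega_1$, one must extend to meet the club at some $\alpha \in E$, where for $E \subseteq T^c$ the required extension model cannot be chosen from $V$. Here Theorem \ref{thm_Velick} is the key tool: it produces, after reals are added, projective-stationarily many new countable $z \in [H^V_\theta]^\omega$ with $z\cap\omega_1 \in E$ and $\sup(z\cap\theta)\in R$ for any prescribed stationary $R\subseteq\theta\cap\text{cof}(\omega)$. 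The same application simultaneously verifies preservation of ground-model stationary subsets of $\theta\cap\text{cof}(\omega)$ --- the last clause of Part (1) --- essentially for free. The delicate point will be arranging the finite-support Cohen-type side conditions so that Theorem \ref{thm_Velick} can be invoked inside $V^{\mathbb{P}}$ and the model-extension argument can be pushed through uniformly in arbitrary (possibly nonstationary) $T$.
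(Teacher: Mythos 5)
Your Part (\ref{item_SemiproperPoset}) is essentially the paper's construction: add a Cohen real, then shoot a continuous $\omega_1$-chain (via the decorated poset of Definition \ref{def_DecoratedPoset}) through the family of countable $z$ with ($z\cap H^V_\theta\in V$ and $z\cap\omega_1\in T$) or ($z\cap\omega_2^V\notin V$ and $z\cap\omega_1\in T^c$), then seal with Theorem \ref{thm_Sealing}; Theorem \ref{thm_Velick} supplies the stationarity of the ``new'' part and the preservation of stationary subsets of $\omega_1$ and of $\theta\cap\text{cof}(\omega)$. The ``delicate point'' you defer is resolved in the paper simply by making $\text{Add}(\omega)$ a separate first factor and running the club-shooting over $V[\sigma]$, so that Fact \ref{fact_DecoratedPosetFacts} applies verbatim; no interleaving of finite Cohen conditions into the side conditions is needed.

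Part (\ref{item_ProperPoset}), however, contains a genuine error. You propose to \emph{mandate} new models (those with $z\cap\omega_2^V\notin V$) at every coordinate in $T^c\setminus S$, where $T^c\setminus S$ is stationary, and you claim the resulting poset is proper with external part exactly $T^c\setminus S$. No proper poset can do this. If $\mathbb{P}$ is proper and collapses $H^V_\theta$ to $\omega_1$ with generic filtration $\langle M_i\rangle$, then for \emph{every} $V$-stationary $A\subseteq\omega_1$ the set $\big([H^V_\theta]^\omega\big)^V\searrow A$ remains stationary in $V^{\mathbb{P}}$, and since $\{M_i\cap H^V_\theta : i<\omega_1\}$ contains a club of $[H^V_\theta]^\omega$, stationarily many $i\in A$ satisfy $M_i\cap H^V_\theta\in V$. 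Hence the internal part of $H^V_\theta$ meets every ground-model stationary set, and in particular cannot avoid $T^c\setminus S$. Concretely, your poset fails the master-condition argument: for a countable $N\prec H_\Omega$ in $V$ with $N\cap\omega_1\in T^c\setminus S$, the only candidate top model $N[\sigma]\cap H_\theta[\sigma]$ has old trace on $H^V_\theta$ and so is forbidden at that coordinate --- this is exactly the obstruction that makes the Part (\ref{item_SemiproperPoset}) poset merely $\omega_1$-SSP rather than proper. The statement of Part (\ref{item_ProperPoset}) is correspondingly weaker than what you aim for, and the correct construction reflects this: one shoots the chain through the family of \emph{all} old models together with the new models $z$ satisfying $z\cap\omega_1\in T^c$. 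Allowing old models at every coordinate restores properness (every countable $N$ yields a strong master condition), the old models with trace in $T$ force $\text{int}(H^V_\mu)\supseteq T$, and genericity together with the stationarity of both subfamilies below $T^c$ (old models via properness, new models via Theorem \ref{thm_Velick}) gives that the internal and external parts each meet $T^c$ stationarily --- which is all that is claimed, and all that is achievable.
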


\begin{proof}

For part \ref{item_SemiproperPoset}:  $\mathbb{Q}^{\omega_1-\text{SSP}}_{T,\theta}$ is defined to be the poset

\begin{equation}\label{eq_3stepPoset}
\text{Add}(\omega) \ * \ \dot{\mathbb{C}}^{\text{fin}}_{\text{dec}} (\dot{X}^\theta_T) \ * \ \dot{\mathbb{S}}(H^V_\theta)
\end{equation}
where, letting $\dot{\sigma}$ be the $\text{Add}(\omega)$-name for its generic object,
\begin{eqnarray*}
\dot{X}^\theta_T:= & \text{Lift}^{H^V_\theta[\dot{\sigma}]} \Big( \big([H^V_\theta]^\omega  \searrow T \big)^V \ \cup \ \big( [\omega_2]^\omega \searrow T^c \big) \setminus V \Big) \\
= & \big\{ z \in \big[H_\theta[\dot{\sigma}] \big]^\omega \ : \ (z \cap H^V_\theta \in V \text{ and } z \cap \omega_1 \in T ) \text{ or } (z \cap \omega_2 \notin V \text{ and } z \cap \omega_1 \in T^c)   \big\},  
\end{eqnarray*}

$\dot{\mathbb{C}}^{\text{fin}}_{\text{dec}}(\dot{X}^\theta_T)$ is the poset from Definition \ref{def_DecoratedPoset}, and $\mathbb{S}(H^V_\theta)$ is the poset given by Theorem \ref{thm_Sealing}, assuming that the assumptions of that theorem hold, which we verify first.  For Theorem \ref{thm_Sealing} to be applicable, we need to check that the first two steps of \eqref{eq_3stepPoset} force $H^V_\theta \in \text{IU}$.  This is part of the following claim:

\begin{nonGlobalClaim}\label{clm_First2steps}
The first two steps of the poset  \eqref{eq_3stepPoset} force that 
\begin{enumerate}[label=(\alph*)]
\item\label{item_GuessingAndIU} $H^V_\theta \in G_{\omega_1} \cap \text{IU}$;

\item\label{item_PreserveStatTheta} all $V$-stationary subsets of $\theta \cap \text{cof}(\omega)$ remain stationary; 

\item\label{item_PreserveStatOmega1} all $V$-stationary subsets of $\omega_1$ remain stationary; and

\item\label{item_InternalPartIsT} for all $\mu \in [\omega_2^V,\theta]$ that are regular in $V$, $H^V_\mu$ has internal part exactly $T$.
\end{enumerate}
\end{nonGlobalClaim}
\begin{proof}
(of Claim \ref{clm_First2steps}):  the proofs of parts \ref{item_GuessingAndIU} and \ref{item_PreserveStatTheta} break into cases, depending on whether or not $T$ is stationary in $V$.  Though the proofs of these parts could be combined, we find it conceptually simpler to prove each part separately, at the expense of some slight repetition. 

For part \ref{item_GuessingAndIU}:  the poset $\text{Add}(\omega)$ is strongly proper, and the 2nd step is forced to be strongly proper on a stationary set by Fact \ref{fact_DecoratedPosetFacts}, \emph{provided that} the set $\dot{X}_T^\theta$ is forced by $\text{Add}(\omega)$ to be a stationary subset of $\big[  H_\theta[\dot{\sigma}] \big]^\omega$.  We verify this now.  Let $\sigma$ be $\text{Add}(\omega)$-generic; we break into cases depending on whether $T$ is stationary in $V$.  If $T$ is stationary in $V$, then $[H_\theta]^\omega \searrow T$ is stationary, so properness of $\text{Add}(\omega)$ implies that the set 
\[
\big( [H^V_\theta]^\omega \searrow T \big)^V,
\]
remains stationary in $V[\sigma]$, and hence its lifting to $H^V_\theta[\sigma]$---which is a subset of $(\dot{X}^\theta_T)_\sigma$---is stationary from the point of view of $V[\sigma]$.  On the other hand, if $T$ is nonstationary in $V$, then $T^c$ contains a club subset of $\omega_1$.  By Gitik-Velickovic's Theorem \ref{thm_Velick} (noting that $\omega_2^V = \omega_2^{V[\sigma]}$), the set $\big( [\omega_2]^\omega \big) \setminus V$ is (projective) stationary in $V[\sigma]$.  So in particular $\big( [\omega_2]^\omega  \searrow T^c \big) \setminus V$ is stationary, and hence its lifting to $ H^V_\theta[\sigma]$ is stationary.  And this lifting is a subset of $(\dot{X}^\theta_T)\sigma$.  

In summary, whether $T$ is stationary or not, the set $\dot{X}^\theta_T$ is forced by $\text{Add}(\omega)$ to be stationary, and hence the first two steps of \eqref{eq_3stepPoset} are of the form ``strongly proper, followed by strongly proper on a stationary set".  So this 2-step iteration has the $\omega_1$ approximation and cover properties by Facts \ref{fact_Mitchell} and \ref{fact_CoverApproxClosed2step}.  This, in turn, ensures that $H^V_\mu$ will be $\omega_1$-guessing and internally unbounded after the first two steps of the iteration \eqref{eq_3stepPoset}, for all $V$-regular $\mu \in [\omega_2,\theta]$.  This completes the proof of part \ref{item_GuessingAndIU} of the claim.

Next we verify part \ref{item_PreserveStatTheta} of the claim.  Let $R$ be a stationary subset of $\theta \cap \text{cof}(\omega)$ from the ground model.  Consider two cases, depending on whether or not $T$ is stationary in $V$.  If $T$ is stationary in $V$, then 
\[
V \models \ \widetilde{R}^V_T:=\left\{ z \in \big([H_\theta]^\omega \big)^V \ : \ \text{sup}(z) \in R  \text{ and } z \cap \omega_1 \in T \right\} \text{ is stationary in } [\theta]^\omega.
\]
By properness of $\text{Add}(\omega)$, $\widetilde{R}^V_T$ remains stationary in $V[\sigma]$, and the lifting of $\widetilde{R}^V_T$ to $H_\theta[\sigma]$ is a stationary subset of $(\dot{X}^\theta_T)_\sigma$.  By Fact \ref{fact_DecoratedPosetFacts}, the second step of the forcing \eqref{eq_3stepPoset} preserves the stationarity of $\widetilde{R}^V_T$.  Since the supremum of every element of $\widetilde{R}^V_T$ lies in $R$, it follows that $R$ is still stationary as well.

Now suppose $T$ is nonstationary in $V$.  By Fact \ref{fact_DecoratedPosetFacts} and the definition of $\dot{X}^\theta_T$, it suffices to prove that 
\[
V[\sigma] \models \left\{ z \in [\theta]^\omega \ : \ \text{sup}(z) \in R, \ z \cap \omega_2 \notin V \text{ and } z \cap \omega_1 \in T^c \right\} \text{ is stationary in } [\theta]^\omega.
\]
Now $\omega_2^V = \omega_2^{V[\sigma]}$, and $T^c$ contains a club subset of $\omega_1$ by our case.  So, in turn, it suffices to show that 
\begin{equation}\label{eq_ToShowInVsigma}
V[\sigma] \models \left\{ z \in [\theta]^\omega \ : \  \text{sup}(z) \in R \text{ and } z \cap \omega_2 \in [\omega_2]^\omega \setminus  V \right\} \text{ is stationary in } [\theta]^\omega.
\end{equation}
Since $\omega_2^V = \omega_2^{V[\sigma]}$, Theorem \ref{thm_Velick} implies that $[\omega_2]^\omega \setminus V$ is stationary in $V[\sigma]$ (where the $\lambda$ from the statement of that theorem is taken to be $\omega_2$).  By properness of $\text{Add}(\omega)$, $R$ is still a stationary subset of $\theta \cap \text{cof}(\omega)$ in $V[\sigma]$.  If $\theta = \omega_2$ then  \eqref{eq_ToShowInVsigma} follows from Theorem \ref{thm_Velick} directly; if $\theta > \omega_2$ then \eqref{eq_ToShowInVsigma} follows from Theorem \ref{thm_Velick} (again with $\lambda = \omega_2$) together with Lemma \ref{lem_LiftStatToStatOrd}.

Next we verify part \ref{item_PreserveStatOmega1} of the claim.  Let $S$ be a stationary subset of $\omega_1$, and let $\sigma$ be $(V,\text{Add}(\omega))$-generic.  Then at least one of $S \cap T$ or $S \cap T^c$ is stationary.  Suppose first that $S \cap T$ is stationary.  Then $([H^V_\theta]^\omega)^V  \searrow (S \cap T)$ is stationary in $V$, and remains so in $V[\sigma]$ by properness of $\text{Add}(\omega)$; then its lifting to $H^V_\theta[\sigma]$ is a stationary subset of of $X^\theta_T:=(\dot{X}^\theta_T)_\sigma$.  So by Fact \ref{fact_DecoratedPosetFacts} (viewing $V[\sigma]$ as the ground model), $\mathbb{C}^{\text{fin}}_{\text{dec}}(X^\theta_T)$ preserves the stationarity of $\text{Lift}^{H^V_\theta[\sigma]}\Big(\big([H^V_\theta]^\omega\big)^V  \searrow (S \cap T)\Big)$, and in particular the stationarity of $S \cap T$.  Now suppose $S \cap T^c$ is stationary.  Then it is still stationary in $V[\sigma]$, so by Gitik-Velickovic's Theorem \ref{thm_Velick}, $\Big([\omega_2]^\omega \searrow (S \cap T^c) \Big) \setminus V$ is stationary, and hence its lifting to $H^V_\theta[\sigma]$ is stationary.  Moreover, this lifting is a subset of $X^\theta_T$.  Then by Fact \ref{fact_DecoratedPosetFacts} (again viewing $V[\sigma]$ as the ground model), the poset $\mathbb{C}^{\text{fin}}_{\text{dec}}(X^\theta_T)$ preserves stationarity of $\text{Lift}^\theta \Big( \big([\omega_2]^\omega \searrow (S \cap T^c) \big) \setminus V \Big)$, and hence stationarity of $S 
\cap T^c$.

Finally we verify part \ref{item_InternalPartIsT} of the claim.  The poset $\mathbb{C}(X^\theta_T)$ adds a filtration $\langle Q_i \ : \ i < \omega_1 \rangle$ of $H^V_\theta[\sigma]$ with the property that whenever $i$ is in the club $C := \{ i < \omega_1 \ : \ Q_i \cap \omega_1 = i \}$, then: 
\begin{itemize}
 \item if $i \in T$ then $Q_i \cap H^V_\theta \in V$, which implies that $Q_i \cap H^V_\mu \in V$ for all $\mu \in [\omega_2,\theta]$; and
 \item if $i \in T^c$ then $Q_i \cap \omega_2 \notin V$, which implies that $Q_i \cap H^V_\mu \notin V$ for all $\mu \in [\omega_2,\theta]$.
\end{itemize}
It follows that for each $V$-regular $\mu \in [\omega_2,\theta]$, the filtration $\langle Q_i \cap H^V_\mu \ : \ i < \omega_1 \rangle$ has the property that for every $i \in C$, $Q_i \cap H_\mu^V$ is an element of $H^V_\mu$ if and only if $i \in T$.  This shows that the internal part of $H^V_\mu$ is forced to be exactly $T$.

\end{proof}

Then by Claim \ref{clm_First2steps} and Theorem \ref{thm_Sealing}, the third step $\dot{\mathbb{S}}(H^V_\theta)$ of the poset \eqref{eq_3stepPoset} forces $H^V_\theta$ to be \emph{indestructibly} guessing (and still internally unbounded).  Also, Claim \ref{clm_First2steps} and Lemma \ref{lem_InternalPartAbsolute} imply that for every $V$-regular $\mu \in [\omega_2^V,\theta]$, $H^V_\mu$ still has internal part exactly $T$ after forcing with $\dot{\mathbb{S}}(H^V_\theta)$, since the latter preserves $\omega_1$ (it is c.c.c.).  Since $\dot{\mathbb{S}}(H^V_\theta)$ is forced to be c.c.c., in particular it preserves all stationary subsets of $\omega_1$ and of $\theta \cap \text{cof}(\omega)$ that lie in $V^{\text{Add}(\omega) \ * \ \dot{\mathbb{C}}^{\text{fin}}_{\text{dec}} (\dot{X}^\theta_T)}$.  Together with Claim \ref{clm_First2steps}, it follows that all stationary subsets of $\theta \cap \text{cof}(\omega)$, and of $\omega_1$, from $V$ are preserved by the 3-step iteration \eqref{eq_3stepPoset}.  This completes the proof of part \ref{item_SemiproperPoset} of the theorem.

Part \ref{item_ProperPoset} is similar:    $\mathbb{Q}^{\text{proper}}_{T,\theta}$ is defined similarly to the poset $\mathbb{Q}^{\omega_1\text{-SSP}}_{T,\theta}$ defined in \eqref{eq_3stepPoset}, except that instead of the $\text{Add}(\omega)$-name $\dot{X}^\theta_T$, we use the $\text{Add}(\omega)$-name
\begin{eqnarray*}
\dot{Y}^\theta_T:= &  \text{Lift}^{H^V_\theta[\sigma]}\Big( ([H^V_\theta]^\omega)^V \ \cup \   \big( [\omega_2]^\omega \searrow T^c \big) \setminus V  \Big) \\
 & = \big\{ z \in \big[H_\theta[\sigma] \big]^\omega \ : \ \big( z \cap H^V_\theta \in V  \big) \text{ or } \big( z \cap \omega_1 \in T^c \text{ and } z \cap \omega_2 \notin V  \big)  \big\}.
\end{eqnarray*}

The proof is similar to the proof of part \ref{item_SemiproperPoset}, so we only briefly sketch it.  Roughly, the fact that $\dot{Y}^\theta_T$ contains the lifting of \emph{all} of $([H^V_\theta]^\omega)^V$---rather than just of $([H^V_\theta]^\omega)^V \searrow T$ as was the case with the club shooting portion of  $\mathbb{Q}^{\omega_1\text{-SSP}}_{T,\theta}$---ensures that the iteration $\mathbb{Q}^{\text{proper}}_{T,\theta}$ will indeed be proper.  Now suppose $\sigma$ is $(V,\text{Add}(\omega))$-generic and $Y^\theta_T:=(\dot{Y}^\theta_T)_\sigma$.  Let $\langle Q_i \ : \ i < \omega_1 \rangle$ be a generic filtration of $H^V_\theta[\sigma]$ added by $\mathbb{C}^{\text{fin}}_{\text{dec}}(Y^\theta_T)$ over $V[\sigma]$.  By definition of $Y^\theta_T$, if $i$ is such that $Q_i \cap \omega_1 = i$ and $i \in T$, then $Q_i \cap H^V_\theta \in V$, and hence $Q_i \cap H^V_\mu \in V$ for every $V$-regular $\mu \in [\omega_2^V,\theta]$.  It follows that the internal part of any such $H^V_\mu$ contains (mod $\text{NS}_{\omega_1}$) the set $T$.  In particular, if $T$ is stationary, then $H^V_\mu \in \text{IS}^{V[\sigma*\vec{Q}]}$.  Now for the ``moreover" clause of part \ref{item_ProperPoset}, assume that $T$ is costationary; we want to show that both the internal and external part of each $H^V_\mu$ have stationary intersection with $T^c$.  That the internal part of each $H^V_\mu$ has stationary intersection with $T^c$ follows from Fact \ref{fact_DecoratedPosetFacts} together with the fact that $([H^V_\theta]^\omega)^V \searrow T^c$ is, in $V[\sigma]$, a stationary subset of $Y^\theta_T$ (more precisely, its lifting to $H_\theta[\sigma]$ is a stationary subset of $Y^\theta_T$).  For the external part, the Gitik-Velickovic Theorem \ref{thm_Velick} ensures that, in $V[\sigma]$, the lifting of $\big( [\omega_2]^\omega \searrow T^c \big) \setminus V$ is a stationary subset of $Y^\theta_T$, and hence by Fact \ref{fact_DecoratedPosetFacts} there are stationarily many $i < \omega_1$ such that $Q_i \cap \omega_1 =i \in T^c$ and $Q_i \cap \omega_2^V \notin V$.  For such $i$, $Q_i \cap H^V_\mu \notin H^V_\mu$ for every $V$-regular $\mu \in [\omega_2^V,\theta]$.

The proof that $H^V_\theta$ is forced by $\mathbb{Q}^{\text{proper}}_{T,\theta}$ to be indestructibly guessing is almost identical to the proof from part \ref{item_SemiproperPoset}.
\end{proof}

\begin{corollary}\label{cor_ControlInternal}
Assume $\theta \ge \omega_2$ is regular and $T \subseteq \omega_1$.  Let $\mathbb{Q}^{\text{proper}}_{T, \theta}$ and $\mathbb{Q}^{\omega_1-\text{SSP}}_{T,\theta}$ be the posets given by Theorem \ref{thm_TwoPosets}.  
\begin{enumerate}
 \item\label{item_SemiProperVersion_model} Assume $W \prec (H_{(2^\theta)^+}, \in, T)$, $|W|=\omega_1 \subset W$, and there exists a $(W,\mathbb{Q}^{\omega_1-\text{SSP}}_{T,\theta})$-generic filter.  Then for every regular $\mu \in [\omega_2,\theta] \cap W$, $W \cap H_\mu$ is indestructibly guessing, has internal part exactly $T$, and external part exactly $T^c$.  In particular, if $T$ is nonstationary then $W \cap H_\mu \in \text{GIU} \setminus \text{IS}$, and if $T$ is stationary and costationary then $W \cap H_\mu \in \text{GIS} \setminus \text{IC}$.

 If the $(W,\mathbb{Q}^{\omega_1-\text{SSP}}_{T,\theta})$-generic filter is also stationary correct (with respect to names for stationary subsets of $\omega_1$ that lie in $W$), then for every $R \in W$ that is a stationary subset of $\theta \cap \text{cof}(\omega)$, $R \cap \text{sup}(W \cap \theta)$ is stationary in $\text{sup}(W \cap \theta)$.

 \item\label{item_ProperVersion_model}  Let  $\dot{S}^{\omega_2}_{\text{external}}$ be the $\mathbb{Q}^{\text{proper}}_{T, \theta}$-name for the external part of $H^V_{\omega_2}$.   If $T$ is costationary, then $\dot{S}^{\omega_2}_{\text{external}}$ is forced to be a stationary subset of $T^c$.  Furthermore, if $W \prec (H_{(2^\theta)^+}, \in, T)$, $|W|=\omega_1 \subset W$, and there exists a $(W, \mathbb{Q}^{\text{proper}}_{T, \theta})$-generic filter that interprets $\dot{S}^{\omega_2}_{\text{external}}$ as a stationary set, then for every regular $\mu \in [\omega_2,\theta] \cap W$, $W \cap H_\mu$ is indestructibly guessing, its internal part contains $T$, and its external part is stationary and costationary in $T^c$.  In particular, if $T$ is stationary and costationary, then $W \cap H_\mu \in \text{G}_{\omega_1} \cap \text{IS} \setminus \text{IC}$ for every regular $\mu \in [\omega_2,\theta] \cap W$.

\end{enumerate}
\end{corollary}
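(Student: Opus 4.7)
The plan is to reduce both parts of the corollary to the genuine forcing extension over the Mostowski collapse of $W$. Let $\sigma_W\colon H_W \to W$ denote the inverse of the Mostowski collapse, and let $g$ be the given $(W,\mathbb{Q})$-generic filter, where $\mathbb{Q}$ stands for either $\mathbb{Q}^{\omega_1\text{-SSP}}_{T,\theta}$ or $\mathbb{Q}^{\text{proper}}_{T,\theta}$ depending on the part. Set $\bar g := \sigma_W^{-1}[g]$; as recorded in Section~\ref{sec_ForcingAxiomPrelims}, $\bar g$ is $(H_W,\bar{\mathbb{Q}})$-generic in the usual sense, where $\bar{\mathbb{Q}} := \sigma_W^{-1}(\mathbb{Q})$. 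Since $T,\theta,\omega_2 \in W$ and $\omega_1 \subseteq W$, elementarity yields $\sigma_W^{-1}(T)=T$ and $\sigma_W^{-1}(\omega_2) = \omega_2^{H_W}$, so $\bar{\mathbb{Q}}$ has exactly the definition of the relevant poset from Theorem~\ref{thm_TwoPosets}, computed inside $H_W$ with $\bar\theta := \sigma_W^{-1}(\theta)$ in place of $\theta$. For each regular $\mu \in [\omega_2,\theta] \cap W$, set $\bar\mu := \sigma_W^{-1}(\mu)$; then $(H_{\bar\mu})^{H_W}$ is precisely the transitive collapse of $W \cap H_\mu$.

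Applying Theorem~\ref{thm_TwoPosets} inside the bona fide extension $H_W[\bar g]$ then yields all the required structural facts about each $(H_{\bar\mu})^{H_W}$: indestructibly guessing, internally unbounded, with internal part exactly $T$ in the setting of part~\ref{item_SemiProperVersion_model}; and indestructibly guessing with internal part containing $T$ and external part stationary in $T^c$ in the setting of part~\ref{item_ProperVersion_model}, provided $g$ interprets $\dot S^{\omega_2}_{\text{external}}$ as a stationary set. The transfer back to $V$ is then vertical: $\mathbb{Q}$ preserves $\omega_1$, hence so does $\bar{\mathbb{Q}}$ over $H_W$, whence $\omega_1^{H_W[\bar g]} = \omega_1^V$ and $V$ is an outer model of $H_W[\bar g]$ preserving $\omega_1$. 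Indestructible guessing is by design stable under such outer extensions, and Lemma~\ref{lem_InternalPartAbsolute} supplies the analogous absoluteness for internal and external parts. Since all three notions are invariant under $\in$-isomorphism, the desired conclusions transfer from $(H_{\bar\mu})^{H_W}$ to $W \cap H_\mu$ itself.

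For the stationary-correctness clause in part~\ref{item_SemiProperVersion_model}, fix $R \in W$ stationary in $\theta \cap \text{cof}(\omega)$ and let $\dot S_R$ be the canonical $\mathbb{Q}$-name for $\{i<\omega_1 : \sup(\dot Q_i \cap \theta) \in R\}$, where $\langle \dot Q_i : i < \omega_1 \rangle$ is the generic continuous $\in$-filtration of $H^V_\theta$ added by the second factor of $\mathbb{Q}$. Since $R$ and the parameters defining $\dot S_R$ lie in $W$, also $\dot S_R \in W$. Preservation of $R$ as a stationary subset of $\theta \cap \text{cof}(\omega)$, given by part~\ref{item_PreserveStatTheta} of Claim~\ref{clm_First2steps}, together with continuity and cofinality of the generic filtration, forces $\dot S_R$ to be stationary in $\omega_1$. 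Stationary-correctness of $g$ then hands us a genuinely stationary $(\dot S_R)_g \subseteq \omega_1$; translated through $\sigma_W$ to $H_W[\bar g]$, this asserts that $\sigma_W^{-1}(R)$ is hit cofinally by the suprema $\sup(\bar Q_i \cap \bar\theta)$ of the filtration added by $\bar{\mathbb{Q}}$, and $\sigma_W$ maps these suprema continuously back into $W \cap \theta$, producing the required stationary subset of $\sup(W \cap \theta)$.

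Part~\ref{item_ProperVersion_model} is handled by the same Mostowski-collapse template, with $\dot S^{\omega_2}_{\text{external}}$ playing the analogous role: the assumption that $g$ evaluates it to a stationary set feeds into the transfer step to produce the stationary intersection of the external part of $W \cap H_\mu$ with $T^c$. The main technical obstacle, in both parts, is verifying the faithfulness of $\sigma_W$ on the suprema of the collapsed generic filtration---namely, that on a club of indices $i$, $\bar Q_i \cap \bar\theta \in H_W$ so that $\sigma_W(\sup(\bar Q_i \cap \bar\theta))$ really does lie in $W \cap \theta$ and the resulting sequence is continuous. This follows from the definition of the auxiliary set $\dot X^\theta_T$ (respectively $\dot Y^\theta_T$) that governs the filtration: on the relevant club of indices the ``internal case'' of the definition applies, placing $\bar Q_i \cap \bar\theta$ inside $H_W$.
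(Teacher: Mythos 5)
Your main template---collapse $W$ to $H_W$, push $g$ down to a genuine $(H_W,\bar{\mathbb{Q}})$-generic $\bar g$, apply Theorem~\ref{thm_TwoPosets} inside $H_W[\bar g]$, and transfer back using upward absoluteness (Lemma~\ref{lem_InternalPartAbsolute}, indestructibility of guessing) and invariance under $\in$-isomorphism---is exactly the paper's argument, and it is correct for the structural claims. (One small step you gloss over in part~\ref{item_ProperVersion_model}: the name $\dot S^{\omega_2}_{\text{external}}$ only speaks about $H^V_{\omega_2}$, so to conclude anything about $H^V_\mu$ for $\mu>\omega_2$ you need Observation~\ref{obs_InternalPartProjects}, i.e.\ that the external part of $H^V_\mu$ contains that of $H^V_{\omega_2}$; ``stationary in $H_W[\bar g]$'' does not transfer upward, only ``contains the $V$-stationary set $(\dot S^{\omega_2}_{\text{external}})_g$'' does.)

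The genuine problem is in your last paragraph, handling the stationary-correctness clause of part~\ref{item_SemiProperVersion_model}. First, the obstacle you isolate is not the right one: $\sup(\bar Q_i\cap\bar\theta)$ is an ordinal below $\bar\theta$, hence automatically in the transitive set $H_W$ and in the domain of $\sigma_W$, so its image lies in $W\cap\theta$ regardless of whether $\bar Q_i\cap\bar\theta\in H_W$. What actually needs proof is that the map $i\mapsto \sigma_W\big(\sup(\bar Q_i\cap\bar\theta)\big)$ is \emph{continuous} (so that the image of the stationary set $(\dot S_R)_g$ is stationary in $\sup(W\cap\theta)$, rather than being swallowed by the gaps of $W\cap\theta$). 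Second, your proposed fix is false: for a general $T$ the ``internal case'' of $\dot X^\theta_T$ applies only on a set of indices equal to $T$ mod $\text{NS}_{\omega_1}$, not on a club---and in the key application of this clause (Theorem~\ref{thm_MMplusnotISimpliesORD_DRP}) one takes $T=\emptyset$, so the internal case applies almost nowhere. Moreover $\bar Q_i\cap\bar\theta\in H_W$ would not give continuity of $\sigma_W$ at $\sup(\bar Q_i\cap\bar\theta)$ anyway. The correct ingredient, which the paper uses, is that $W\cap\theta$ is an $\omega$-closed set of ordinals (Viale--Weiss, since $W\cap H_\theta$ is a guessing model with the covering property); $\omega$-closure is exactly what makes $\sigma_W\restriction\bar\theta$ continuous at points of countable cofinality, hence makes the transferred sequence a continuous cofinal map of $\omega_1$ into $\sup(W\cap\theta)$, after which the image of $(\dot S_R)_g$ is a stationary subset of $\sup(W\cap\theta)$ contained in $R$. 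With that substitution (and the preservation of the stationarity of $R$ in $\theta\cap\text{cof}(\omega)$ from Theorem~\ref{thm_TwoPosets}, which you do cite), your argument goes through.
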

\begin{proof}

For part \ref{item_ProperVersion_model}, the fact that $\dot{S}^{\omega_2}_{\text{external}}$ is forced to be stationary follows immediately from Theorem \ref{thm_TwoPosets}.  Also, Observation \ref{obs_InternalPartProjects} implies that 
\begin{equation}\label{eq_ExternalPartGrows}
\Vdash \ \ \forall \mu \in \text{REG} \cap [\omega_2,\theta] \ \  \ \text{ext}(H^V_\mu) \supseteq \text{ext}(H^V_{\omega_2}) =  \dot{S}^{\omega_2}_{\text{external}}. 
\end{equation}
Now suppose $W$ is as in the statement of part \ref{item_ProperPoset}, and that $g$ is a  $(W,\mathbb{Q}^{\text{proper}}_{T, \theta})$-generic filter such that $(\dot{S}^{\omega_2}_{\text{external}})_g$ is a stationary subset of $\omega_1$ (in $V$).  Let $\sigma_W:H_W \to W \prec (H_{(2^\theta)^+}, \in, T) $ be the inverse of the transitive collapse of $W$, and $b_W := \sigma^{-1}_W(b)$ for every $b \in W$.  Let $\bar{g}$ be the pointwise image of $g$ under the collapsing map.  Note that $T \in W$ and $T$ is not moved by the collapsing map.  It can also be easily verified that $(\dot{S}^{\omega_2}_{\text{external}})_g$ is the same as $\big(\sigma_W^{-1}(\dot{S}^{\omega_2}_{\text{external}})\big)_{\bar{g}}$; let $S$ denote this set.  
Fix any regular $\mu \in W \cap [\omega_2,\theta]$. Then by \eqref{eq_ExternalPartGrows} and Theorem \ref{thm_TwoPosets}, $H_W[\bar{g}] \models$ ``$\sigma^{-1}(H_\mu)$ is indestructibly guessing, size $\omega_1$, its internal part contains $\sigma^{-1}(T) = T$, and its external part contains $S$".  Since $H_W[\bar{g}]$ is a transitive $\text{ZF}^-$ model and $V$ is an outer model of $H_W[\bar{g}]$ with the same $\omega_1$, those statements are upward absolute to $V$.  Finally, notice that the statement ``$X$ is indestructibly guessing, its internal part contains $T$, and its external part contains $S$"---viewing $T$ and $S$ as fixed parameters---is invariant across sets that are $\in$-isomorphic to $X$.  In particular, $V$ believes the same statement about $\sigma_W [ \sigma_W^{-1}(H_\mu)]$, which is just $W \cap H_\mu$.  This completes the proof of part \ref{item_ProperVersion_model}.

The proof of \ref{item_SemiProperVersion_model} is similar, using instead part \ref{item_SemiproperPoset} of Theorem \ref{thm_TwoPosets}.  The only new thing to check is that if the $W$-generic filter $g$ is stationary correct, then $W$ is diagonally reflecting for stationary subsets of $\theta \cap \text{cof}(\omega)$ lying in $W$.  To see this, note that $\theta$ is collapsed to $\omega_1$ and still has uncountable cofinality.  Let $\dot{f}$ be a $\mathbb{Q}^{\omega_1-\text{SSP}}_{T,\theta}$-name for a strictly increasing, continuous, cofinal function from $\omega_1 \to \theta$.  Suppose $R \in W$ and $R$ is stationary in $\theta \cap \text{cof}(\omega)$.  Let $\dot{T}_R$ be the name for $\{ i < \omega_1 \ : \ \dot{f}(i) \in \check{R} \}$.  Since $R$ remains stationary by Theorem \ref{thm_TwoPosets}, it follows that $\dot{T}_R$ is forced to be a stationary subset of $\omega_1$, and hence its interpretation by $g$ is really stationary in $\omega_1$.   By Viale-Weiss~\cite{VW_ISP}, $W \cap \theta$ is an $\omega$-closed set of ordinals.  This, together with $W$-genericity of $g$, implies that $(\dot{f})_g$ maps $\omega_1$ continuously and cofinally into $\text{sup}(W \cap \theta)$.  Then the pointwise image of $(\dot{T}_R)_g$ under $(\dot{f})_g$ is a stationary subset  of $\text{sup}(W \cap \theta)$, and also contained in $R$.
\end{proof}

\section{Preservation of forcing axioms via generic embeddings}\label{sec_PreservViaEmbed}

The material in this section is motivated by Woodin~\cite{MR2723878} and Viale (\cite{Viale_GuessingModel},  \cite{Viale_ChangModel}), where stationary tower forcing is used in conjunction with forcing axioms.   Unlike those settings, which made use of large cardinals in the universe to ensure \emph{wellfounded} generic ultrapowers via stationary tower forcing, wellfoundedness will not be important for the goals of this article.  The main theorem in this section (Theorem \ref{thm_PreservationFA}) makes preservation arguments for forcing axioms very closely resemble arguments that involve lifting traditional large cardinal embeddings; this makes forcing axiom preservation arguments conceptually clearer, in the author's opinion.  Theorem \ref{thm_PreservationFA} is possibly folklore, but the author is not aware of any presentation of it in the literature.  The section concludes with several preservation results from the literature which can be viewed as instances of Theorem \ref{thm_PreservationFA}.

We make the following definition, which holds of all standard classes of posets (proper, c.c.c., etc.):
\begin{definition}\label{def_Nice}
We say that a class $\Gamma$ of posets is \textbf{closed under restrictions} iff whenever $\mathbb{Q} \in \Gamma$ and $q \in \mathbb{Q}$, then 
\[
\mathbb{Q} \restriction q := \{ p \in \mathbb{Q} \ : \ p \le q \}
\]
(with order inherited from $\mathbb{Q}$) is in $\Gamma$.
\end{definition}

Note that in the statement of Theorem \ref{thm_PreservationFA}, we do \textbf{not} require that $\Gamma$ (or $\dot{\Delta}$) is closed under taking regular suborders; this will be convenient in the proof of the theorems about stationary reflection, since e.g.\ ``being proper and and forcing $H^V_{\omega_2} \notin \text{IC}$" is not closed under regular suborders (the regular suborder will be proper, but may fail to force $H^V_{\omega_2} \notin \text{IC}$).  However, notice that if $\text{FA}(\Gamma)$ holds and $\widetilde{\Gamma}$ is the closure of $\Gamma$ under taking regular suborders, then $\text{FA}(\widetilde{\Gamma})$ also holds.  This is not necessarily true of the ``plus" versions, unless one also requires that the \emph{quotient} of the regular suborder is $\omega_1$-SSP.  We will use the following observation several times:

\begin{observation}\label{obs_FMS}
If $\text{FA}(\Gamma)$ holds and $\mathbb{P}$ is a regular suborder of some member of $\Gamma$ (but possibly $\mathbb{P} \notin \Gamma$), then $\mathbb{P}$ preserves stationary subsets of $\omega_1$.  This is because, by Foreman-Magidor-Shelah~\cite{MR924672}, every member of $\Gamma$ must be $\omega_1$-SSP, and this property clearly is inherited by regular suborders.
\end{observation}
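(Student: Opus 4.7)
The plan is to reduce the statement to two components: (i) the Foreman--Magidor--Shelah fact that any poset $\mathbb{Q}$ for which $\text{FA}(\{\mathbb{Q}\})$ holds must preserve stationary subsets of $\omega_1$; and (ii) the elementary observation that the $\omega_1$-SSP property is inherited by regular suborders. Chained together with the hypothesis that $\mathbb{P}$ is a regular suborder of some $\mathbb{Q} \in \Gamma$, these two facts immediately give the conclusion, since $\text{FA}(\Gamma)$ in particular implies $\text{FA}(\{\mathbb{Q}\})$ for each $\mathbb{Q} \in \Gamma$.

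For (i), I would argue by contradiction. Suppose $\mathbb{Q} \in \Gamma$ forces some ground-model stationary $S \subseteq \omega_1$ to become nonstationary, witnessed by a $\mathbb{Q}$-name $\dot{C}$ for a club in $\omega_1$ disjoint from $\check{S}$. For each $\alpha < \omega_1$, set
\[
D_\alpha = \{ q \in \mathbb{Q} : q \text{ decides } \dot{C} \cap \alpha \text{ completely, and } q \Vdash \exists \beta > \check\alpha \ \check\beta \in \dot{C} \},
\]
which is dense in $\mathbb{Q}$. Applying $\text{FA}(\Gamma)$ to the family $\{D_\alpha : \alpha < \omega_1\}$ produces a filter $g \subseteq \mathbb{Q}$ meeting every $D_\alpha$. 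Define $C^* := \{\beta < \omega_1 : \exists q \in g,\ q \Vdash \check\beta \in \dot{C}\}$. Unboundedness of $C^*$ is immediate from the second clause of $D_\alpha$. For closure, suppose $\alpha$ is a limit point of $C^*$, and pick $q \in g \cap D_\alpha$; since $g$ is a filter, $q$ is compatible with every condition in $g$ forcing members of $\dot{C} \cap \alpha$, so by its complete decision on $\dot{C} \cap \alpha$ it must in fact force every such $\beta \in C^* \cap \alpha$ to lie in $\dot{C}$. Hence $q$ forces $\dot{C}$ to accumulate at $\alpha$, and since $\dot{C}$ names a club, $q \Vdash \check\alpha \in \dot{C}$, i.e.\ $\alpha \in C^*$. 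Thus $C^* \in V$ is a club in $\omega_1$ disjoint from $S$, contradicting stationarity of $S$.

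For (ii), let $\mathbb{P}$ be a regular suborder of an $\omega_1$-SSP poset $\mathbb{Q}$, let $H$ be a $(V,\mathbb{P})$-generic filter, and let $S \subseteq \omega_1$ be stationary in $V$. By the definition of regular suborder, $H$ extends to a $(V,\mathbb{Q})$-generic filter $G$. Since $\mathbb{Q}$ is $\omega_1$-SSP, $S$ remains stationary in $V[G]$; and since $V[H] \subseteq V[G]$, any club $C \subseteq \omega_1$ lying in $V[H]$ would also lie in $V[G]$, so $C$ cannot be disjoint from $S$ lest stationarity of $S$ in $V[G]$ be contradicted. Hence $S$ is still stationary in $V[H]$, which is what it means for $\mathbb{P}$ to be $\omega_1$-SSP.

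The only mildly delicate point is the verification of closure in (i); this is handled by making the $D_\alpha$ encode initial segment information of $\dot{C}$, so I do not anticipate any further obstacle.
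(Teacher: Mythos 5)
Your overall decomposition is exactly the paper's: cite (or prove) the Foreman--Magidor--Shelah fact that $\text{FA}(\{\mathbb{Q}\})$ forces $\mathbb{Q}$ to be $\omega_1$-SSP, and then observe that $\omega_1$-SSP passes to regular suborders. Your part (ii) is correct as written (the generic $H$ for $\mathbb{P}$ extends, in a further extension, to a $\mathbb{Q}$-generic $G$; a club of $V[H]$ disjoint from $S$ would survive into $V[G]$, contradicting that $\mathbb{Q}$ is $\omega_1$-SSP). The paper simply cites \cite{MR924672} for part (i), whereas you attempt to prove it, and that is where there is a genuine gap.

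The problem is the density of your sets $D_\alpha$. Requiring $q$ to ``decide $\dot{C}\cap\alpha$ completely'' imposes countably many decisions on a single condition, and in general no condition below a given one satisfies all of them: this would force $\dot{C}\cap\alpha$ to equal a ground-model set on a dense set of conditions, which fails whenever the name $\dot{C}$ codes a new real into some bounded block (and you are handed an arbitrary club name disjoint from $S$, so you cannot assume otherwise; e.g.\ $\mathbb{Q}$ could be a product of a club-shooting poset with $\text{Add}(\omega)$, with the Cohen real spliced into $\dot{C}$ at successor ordinals). Your closure verification leans precisely on this complete decision, so the argument does not survive weakening $D_\alpha$ to the (genuinely dense) set of conditions that merely force some specific $\beta>\alpha$ into $\dot{C}$: a filter is only finitely directed, so one cannot amalgamate the infinitely many conditions witnessing $C^*\cap\alpha$ into one condition forcing $\alpha\in\dot{C}$. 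A correct elementary argument works instead with the name $\dot{f}$ for the increasing enumeration of $\dot{C}$, using the dense sets $D_\alpha=\{q:\exists\beta\ q\Vdash\dot{f}(\check\alpha)=\check\beta\}$ (a single value, hence dense) together with, for each $\gamma\in S$, the dense set $E_\gamma=\{q:\exists\xi\ q\Vdash\dot{f}(\check\xi)<\check\gamma<\dot{f}(\check\xi+1)\}$; then the closure points of the induced function $f^*$ form a ground-model club, and the $E_\gamma$'s show it misses $S$. (Alternatively, run the argument through Lemma \ref{lem_WoodinKeyLemma}, where the filter is generic for an $\omega_1$-sized elementary submodel and one can collapse along a continuous chain of countable submodels.) A second, more minor point: you assume the trivial condition forces $S$ nonstationary, whereas in general only some $q_0$ does; one then needs the filter to contain $q_0$, which is where closure of $\Gamma$ under restrictions (Definition \ref{def_Nice}) is used elsewhere in the paper.
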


The key to the preservation theorem is the following lemma of Woodin.

\begin{lemma}[Woodin~\cite{MR2723878}, proof of Theorem 2.53]\label{lem_WoodinKeyLemma}

If $\Gamma$ is a class of partial orders and $\mu \le \omega_1$ is a cardinal (possibly $\mu = 0$), the following are equivalent:
\begin{enumerate}
 \item $\text{FA}^{+\mu}(\Gamma)$;
 \item\label{item_WoodinLemmaStatCorrect} For every $\mathbb{P} \in \Gamma$, every $\mu$-sequence $\dot{\vec{S}}=\langle \dot{S}_i \ : \ i < \mu \rangle$ of $\mathbb{P}$-names for stationary subsets of $\omega_1$, and every (equivalently, some) regular $\theta > |\wp(\mathbb{P})|$, there are stationarily many $W \in \wp^*_{\omega_2}(H_\theta)$ such that there exists an $\dot{\vec{S}}$-correct, $(W,\mathbb{P})$-generic filter.
\end{enumerate}
\end{lemma}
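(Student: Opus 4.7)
The plan is to prove the two implications separately; $(2) \Rightarrow (1)$ is routine and $(1) \Rightarrow (2)$ is the substantive content.

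For the direction $(2) \Rightarrow (1)$, I would fix $\mathbb{P} \in \Gamma$, an $\omega_1$-sized family $\mathcal{D}$ of dense subsets of $\mathbb{P}$, and a sequence $\dot{\vec{S}}$ of $\mathbb{P}$-names for stationary subsets of $\omega_1$, together with a regular $\theta > |\wp(\mathbb{P})|$ large enough that $\mathbb{P}, \mathcal{D}, \dot{\vec{S}} \in H_\theta$.  Since $|\mathcal{D}| = \omega_1$, the set of $W \in \wp^*_{\omega_2}(H_\theta)$ with $\mathcal{D} \cup \{\mathbb{P}, \dot{\vec{S}}\} \subseteq W$ is club in $\wp^*_{\omega_2}(H_\theta)$; intersecting with the stationary set provided by hypothesis (2) yields some $W$ admitting an $\dot{\vec{S}}$-correct $(W,\mathbb{P})$-generic filter $g$.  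Since every $D \in \mathcal{D}$ lies in $W$ as an element, $g \cap D \cap W \neq \emptyset$, and the $\dot{\vec{S}}$-correctness is precisely the ``plus'' clause of $\text{FA}^{+\mu}(\Gamma)$.

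For the direction $(1) \Rightarrow (2)$, fix $\mathbb{P} \in \Gamma$, $\dot{\vec{S}}$, a regular $\theta > |\wp(\mathbb{P})|$, and an arbitrary $F: [H_\theta]^{<\omega} \to H_\theta$; the task is to locate some $W \in \wp^*_{\omega_2}(H_\theta)$ closed under $F$ that admits an $\dot{\vec{S}}$-correct $(W,\mathbb{P})$-generic filter.  My plan is to first build a candidate $W_0 \prec H_\theta$ of size $\omega_1$ with $\omega_1 \subseteq W_0$, $W_0$ closed under $F$, and $\mathbb{P}, \dot{\vec{S}}, F \in W_0$, via a continuous $\in$-chain of countable elementary substructures; the at-most-$\omega_1$-many dense subsets of $\mathbb{P}$ that are elements of $W_0$ form a family $\mathcal{D}_{W_0}$.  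Applying $\text{FA}^{+\mu}(\mathbb{P})$ to $\mathcal{D}_{W_0}$ and $\dot{\vec{S}}$ produces a filter $g \subseteq \mathbb{P}$ meeting every $D \in \mathcal{D}_{W_0}$ with each $(\dot{S}_i)_g$ stationary in $\omega_1$.  I would then pass to an enlarged $W \prec H_\theta$ of size $\omega_1$ with $\omega_1 \subseteq W$, $W$ closed under $F$, and $W_0 \cup \{g\} \subseteq W$, so that $g$ is available as a parameter inside $W$: elementarity of $W$ in $H_\theta$ with parameters $g, D$ converts each $V$-true statement $\exists p\,(p \in D \wedge p \in g)$, valid for $D \in \mathcal{D}_{W_0}$, into the existence of a witness lying inside $W$, establishing $(W,\mathbb{P})$-genericity of $g \cap W$; an analogous elementarity argument applied to the names $\dot{S}_i$ transfers the $\dot{\vec{S}}$-correctness from $g$ to $g \cap W$.

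The main obstacle is the chicken-and-egg tension between $W$ and $g$: we need $g$ as an element of $W$ so the elementarity trick applies (forcing $W$ to be constructed after $g$), but we also need $g$ to meet every dense $D \in W$, which requires that the $\omega_1$-collection handed to $\text{FA}^{+\mu}$ already anticipate $W$'s dense sets.  The delicate point in carrying the above plan through is showing that enlarging $W_0$ to a Skolem hull containing $g$ does not introduce genuinely new dense subsets of $\mathbb{P}$ beyond those in $\mathcal{D}_{W_0}$---equivalently, that any dense set definable inside $W$ using $g$ as a parameter is either already in $\mathcal{D}_{W_0}$ by elementarity of $W_0$ in $H_\theta$, or is automatically met by $g$ in virtue of how it was defined.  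Once this bookkeeping is handled, and noting that the construction can be carried out below any prescribed $F$, the collection of such $W$ is stationary in $\wp^*_{\omega_2}(H_\theta)$, which is exactly the conclusion of (2).
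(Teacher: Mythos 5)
Your argument for (2) $\Rightarrow$ (1) is correct and routine. The paper does not actually prove this lemma (it cites Woodin), so I will compare your plan for (1) $\Rightarrow$ (2) against the standard argument. The step you flag as ``delicate bookkeeping'' is not bookkeeping but a fatal obstruction. Passing from $W_0$ to $W = \text{Sk}(W_0 \cup \{g\})$ necessarily introduces dense sets that $g$ fails to meet \emph{at all}, not merely fails to meet inside $W$. Concretely, take $\mathbb{P} = \text{Coll}(\omega_1,\omega_2)$, for which $\text{FA}(\{\mathbb{P}\})$ is a ZFC theorem. Whatever filter $g$ the axiom hands you, $\bigcup g$ is a partial function from $\omega_1$ to $\omega_2$, so its range omits some ordinal; let $\eta$ be the least one omitted. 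Then $\eta$ is definable in $H_\theta$ from the parameter $g$, so the dense set $E_\eta = \{p : \eta \in \text{ran}(p)\}$ lies in every $W \prec H_\theta$ with $g \in W$, and by choice of $\eta$ we have $g \cap E_\eta = \emptyset$. Thus no elementary $W$ containing $g$ as an element can witness clause (2) via $g$. The deeper point is that for a model $W$ fixed in advance (or determined after the fact by Skolem-closing around $g$) and a dense $D \in W$ with $|\mathbb{P}| \ge \omega_2$, the trace $D \cap W$ need not be predense in $\mathbb{P}$, so no family of dense subsets of $\mathbb{P}$ given to the forcing axiom can compel the filter to meet $D \cap W$.

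The correct proof inverts the dependency: the model is computed \emph{from} the filter via names, and the dense sets fed to the forcing axiom commit, condition by condition, to what the model will be. Fix an $\omega_1$-sized family $T$ of $\mathbb{P}$-names for elements of $H_\theta$ containing $\check{v}$ for every $v \in \omega_1 \cup \{\mathbb{P}, \dot{\vec{S}}, F\}$ and closed under (i) the canonical names $h(\vec{\tau})$ for the Skolem functions $h$ of $(H_\theta,\in,\Delta,\mathbb{P},\dot{\vec{S}},F)$ applied to the evaluations of finite tuples from $T$, and (ii) the operation $\tau \mapsto w(\tau)$, where $w(\tau)$ names the $\Delta$-least element of $\dot{G} \cap \tau$ in case $\tau$ evaluates to a dense set. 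For each $\tau \in T$, the set $E_\tau$ of conditions that decide $\tau$ and, if the decided value $D$ is dense, also lie below some $v \in D$ while being incompatible with every $\Delta$-smaller member of $D$ (hence decide $w(\tau) = v$) is dense in $\mathbb{P}$. Apply $\text{FA}^{+\mu}(\{\mathbb{P}\})$ to $\{E_\tau : \tau \in T\}$ (together with dense sets securing directedness of the trace) and set $W = \{\tau_g : \tau \in T\}$. Since finitely many decisions made in the filter are realized simultaneously below a single condition of $g$, closure (i) makes $W$ an elementary, $F$-closed set of size $\omega_1$ with $\omega_1 \subseteq W$, and closure (ii) places the witness $w(\tau)_g$ in $g \cap D \cap W$ for every dense $D = \tau_g \in W$; folding the given $F$ into the Skolem functions yields stationarity. (When $\Gamma$ is closed under appending $\sigma$-closed collapses, as in the Remark following the lemma, one may instead collapse $2^{|\mathbb{P}|}$ to $\omega_1$ and do the same bookkeeping with an enumeration, but the name-based version is what covers an arbitrary $\Gamma$.)
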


\begin{remark}\label{rem_WoodinVariant}
Suppose $\mu = \omega_1$, and $\Gamma$ has the additional property that for every $\mathbb{P} \in \Gamma$ and every cardinal $\lambda$, there exists a $\mathbb{P}$-name $\dot{\mathbb{Q}}$ such that $\mathbb{P}*\dot{\mathbb{Q}} \in \Gamma$ and $\mathbb{P}*\dot{\mathbb{Q}} \Vdash |\lambda| \le \omega_1$.  This holds, for example, if $\Gamma$ is the class of proper posets, or $\omega_1$-SSP posets (but not c.c.c. posets).  Then clause \ref{item_WoodinLemmaStatCorrect} of Lemma \ref{lem_WoodinKeyLemma} can be replaced by: ``For every $\mathbb{P} \in \Gamma$ and every (equivalently, some) regular $\theta > |\wp(\mathbb{P})|$, there are stationarily many $W \in \wp^*_{\omega_2}(H_\theta)$ such that there exists a $(W,\mathbb{P})$-generic filter $g$, such that for every $\dot{S} \in W$ that names a stationary subset of $\omega_1$, the evaluation of $\dot{S}$ by $g$ is stationary (in $V$)."  
\end{remark}

Woodin's Lemma \ref{lem_WoodinKeyLemma}, together with basic theory of generic ultrapowers, yields the following Theorem \ref{thm_VariantWoodinTheorem}, which is a variant of Theorem 2.53 of Woodin~\cite{MR2723878} with weaker hypotheses (e.g.\ no Woodin cardinals are needed) and weaker conclusion (e.g.\ the generic embeddings constructed in Theorem \ref{thm_VariantWoodinTheorem} are not even necessarily wellfounded, though the ones from Woodin~\cite{MR2723878} are generic almost huge embeddings).  Essentially, $\text{FA}(\Gamma)$ can be characterized by existence of generically ``supercompact" elementary embeddings where the (possibly illfounded) target model has $V$-generics for the relevant forcing.

\begin{theorem}\label{thm_VariantWoodinTheorem}[minor variant of Theorem 2.53 of Woodin~\cite{MR2723878}]
Let $\Gamma$ be a class of posets closed under restrictions, and let $\mu \le \omega_1$ be a cardinal.  The following are equivalent:
\begin{enumerate}
 \item\label{item_FAchar_FA} $\text{FA}^{+\mu}(\Gamma)$;
 \item\label{item_FAchar_embedding} For every $\mathbb{Q} \in \Gamma$, every $q \in \mathbb{Q}$, every sequence $\langle \dot{S}_i \ : \ i < \mu \rangle$ of $\mathbb{Q}$-name for stationary subsets of $\omega_1$, and every (equivalently, some) regular $\theta > |\wp(\mathbb{Q})|$, there is a generic elementary embedding $j: V \to N$ such that:
 \begin{enumerate}
  \item $H^V_{\theta}$ is in the wellfounded part of $N$ (we assume the wellfouned part of $N$ has been transitivized), and $| H^V_{\theta}|^N=\omega_1$;

  \item $j  \restriction H^V_{\theta} \in N$
  \item $\text{crit}(j) = \omega_2^V$;
  \item\label{item_ClauseStatCorrect} There exists some $H \in N$ such that $q \in H$, $H$ is $(V,\mathbb{Q})$-generic, and for every $i < \mu$, $N \models$ ``$(\dot{S}_i)_H$ is a stationary subset of $\omega_1$".
\end{enumerate}   
\end{enumerate}
\end{theorem}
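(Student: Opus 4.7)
The plan is to use Woodin's Lemma \ref{lem_WoodinKeyLemma} as a bridge and translate between the ``$W$-generic filter'' characterization of $\mathrm{FA}^{+\mu}(\Gamma)$ and the ``generic embedding'' characterization in clause \eqref{item_FAchar_embedding}, via the standard generic ultrapower by the nonstationary ideal on $\wp^*_{\omega_2}(H_\theta)$ restricted to an appropriate stationary set. The closure-under-restrictions hypothesis on $\Gamma$ is what allows us to handle the condition $q\in\mathbb{Q}$: we simply apply Lemma \ref{lem_WoodinKeyLemma} to $\mathbb{Q}\restriction q\in\Gamma$, which automatically forces the generic filter to contain $q$.

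For \eqref{item_FAchar_FA}$\Rightarrow$\eqref{item_FAchar_embedding}: apply Lemma \ref{lem_WoodinKeyLemma} to $\mathbb{Q}\restriction q$ to obtain a stationary $T\subseteq\wp^*_{\omega_2}(H_\theta)$ together with a choice $W\mapsto g_W$ of $\dot{\vec{S}}$-correct $(W,\mathbb{Q}\restriction q)$-generic filters. Let $G$ be $V$-generic on $\wp(\wp^*_{\omega_2}(H_\theta))/\mathrm{NS}$ with $T\in G$, and let $j\colon V\to N$ be the resulting generic ultrapower. Standard facts about such ultrapowers give $\mathrm{crit}(j)=\omega_2^V$, $H^V_\theta$ in the wellfounded part with $|H^V_\theta|^N=\omega_1^N$, and $j\restriction H^V_\theta\in N$ (represented by $W\mapsto\sigma_W$). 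Set $H:=[W\mapsto g_W]_G$: by \L o\'s, plus the fact that $\{W\in T:D\in W\}$ is in $G$ for every $V$-dense $D\subseteq\mathbb{Q}$, $H$ is a $V$-generic filter for $\mathbb{Q}$ in $N$ containing $q$. For the stationary correctness clause, note $\omega_1^N=\omega_1^V$ and use \L o\'s again: any club $C\in N$ of $\omega_1^N$ is represented by a function whose values are, on a $G$-large set, genuine clubs of $\omega_1$ in $V$, each of which meets the $V$-stationary set $(\dot{S}_i)_{g_W}$ by correctness; this shows $N\models(\dot{S}_i)_H$ is stationary.

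For \eqref{item_FAchar_embedding}$\Rightarrow$\eqref{item_FAchar_FA}: I verify clause \eqref{item_WoodinLemmaStatCorrect} of Lemma \ref{lem_WoodinKeyLemma}. Fix $\mathbb{Q}\in\Gamma$, a sequence $\dot{\vec{S}}$, and a function $F\colon[H_\theta]^{<\omega}\to H_\theta$ in $V$. Apply \eqref{item_FAchar_embedding} with $q=1_\mathbb{Q}$ to get $j\colon V\to N$ with a witnessing $V$-generic $H\in N$. In $N$, consider $W^*:=j[H^V_\theta]$ and $g^*:=j[H]$. By elementarity, $W^*\prec j(H^V_\theta)$, $W^*$ is closed under $j(F)$, and by hypothesis $|W^*|^N=\omega_1^N\subseteq W^*$; for any dense $D=j(D')\in W^*$ of $j(\mathbb{Q})$, elementarity gives $D'$ dense in $\mathbb{Q}$, so $H\cap D'\ne\emptyset$ yields $g^*\cap D\ne\emptyset$. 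Since $\mathrm{crit}(j)=\omega_2^V$, pointwise $(j(\dot{S}_i))_{g^*}=(\dot{S}_i)_H$ as a subset of $\omega_1$, and this is stationary in $N$ by hypothesis. Thus $N$ witnesses ``$\exists W\prec j(H_\theta)$ closed under $j(F)$ carrying a $j(\dot{\vec{S}})$-correct $(W,j(\mathbb{Q}))$-generic filter''; elementarity of $j$ pulls this back to $V$. Since $F$ was arbitrary, the set of such $W$'s meets every club, hence is stationary, and Lemma \ref{lem_WoodinKeyLemma} delivers $\mathrm{FA}^{+\mu}(\Gamma)$.

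The main technical point is the careful bookkeeping between ``stationary in $V$'' and ``stationary in $N$'' for the name interpretations in the forward direction; this works out because $\mathrm{crit}(j)=\omega_2^V$ gives $\omega_1^N=\omega_1^V$ and because \L o\'s lets one identify clubs of $\omega_1$ in $N$ with coordinatewise $V$-clubs. A secondary subtlety is confirming that the function $W\mapsto\sigma_W$ really represents $j\restriction H^V_\theta$ in the ultrapower, which is a standard fact about the $\wp^*_{\omega_2}(H_\theta)$-ultrapower that I will invoke without redevelopment.
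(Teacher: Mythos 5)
Your proposal is correct and follows essentially the same route as the paper: both directions go through Lemma \ref{lem_WoodinKeyLemma}, the forward direction via the generic ultrapower of the nonstationary ideal restricted to the stationary set of $W$ carrying $\dot{\vec{S}}$-correct $(W,\mathbb{Q}\restriction q)$-generic filters (using closure under restrictions to absorb $q$), and the converse by reflecting the witnesses $j[H^V_\theta]$ and $j[H]$ through elementarity. The only bookkeeping difference is that the paper represents the filter by the \emph{collapsed} filters $\sigma_W^{-1}[h_W]$, so that $[W\mapsto \sigma_W^{-1}[h_W]]_U$ is literally a subset of $\mathbb{Q}$, whereas your $[W\mapsto g_W]_G$ is $j[H']$ for the intended filter $H'$ and must be pulled back via $j\restriction H^V_\theta\in N$ --- a harmless adjustment.
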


We make a remark that is parallel to Remark \ref{rem_WoodinVariant}:
\begin{remark}
Suppose $\mu = \omega_1$, and $\Gamma$ has the additional property that for every $\mathbb{Q} \in \Gamma$ and every cardinal $\lambda$, there exists a $\mathbb{Q}$-name $\dot{\mathbb{R}}$ such that $\mathbb{Q}*\dot{\mathbb{R}} \in \Gamma$ and $\mathbb{Q}*\dot{\mathbb{R}} \Vdash |\lambda| \le \omega_1$.  This holds, for example, if $\Gamma$ is the class of proper posets, or $\omega_1$-SSP posets (but not c.c.c. posets).  Then clause \ref{item_ClauseStatCorrect} of Theorem \ref{thm_VariantWoodinTheorem} can be replaced by: ``There exists some $H \in N$ such that $q \in H$, $H$ is $(V,\mathbb{Q})$-generic filter, and every stationary subset of $\omega_1$ in $V[H]$ remains stationary in $N$."
\end{remark}

\begin{proof}
(of Theorem \ref{thm_VariantWoodinTheorem}):  Since this is very similar to the proof of Theorem 2.53 of Woodin~\cite{MR2723878}, we only briefly sketch the proof.  

First assume $\text{FA}^{+\mu}(\Gamma)$.  Fix $\mathbb{Q} \in \Gamma$, $q \in \mathbb{Q}$, a sequence $\dot{\vec{S}}=\langle \dot{S}_i \ : \ i < \mu \rangle$ of $\mathbb{Q}$-names for stationary subsets of $\omega_1$, and a large regular $\theta$.   Since $\Gamma$ is closed under restrictions, $\mathbb{Q} \restriction q$ is an element of $\Gamma$, and hence the forcing axiom holds for it.  Let $R$ be the set of $W \in \wp_{\omega_2}(H_\theta)$ such that $\omega_1 \subset W$, $\dot{\vec{S}} \in W$, and there exists an $\dot{\vec{S}}$-correct, $(W,\mathbb{Q} \restriction q)$-generic filter $h_W$.  By Lemma \ref{lem_WoodinKeyLemma}, $R$ is stationary.  For each $W \in R$, let $\bar{W}$ be the transitive collapse of $W$, and $\sigma_W: \bar{W} \to W \prec H_\theta$ be the inverse of the Mostowski collapsing map.  Then $\bar{h}_W:=\sigma_W^{-1}[h_W]$ is an $(\bar{W}, \sigma_W^{-1}(\mathbb{Q}))$-generic filter, $\bar{q}_W :=\sigma_W^{-1}(q) \in \bar{h}_W$, $\text{crit}(\sigma_W) = \omega_2^{\bar{W}}$, and the evaluation of $\sigma_W^{-1}(\dot{S}_i)$ by $\bar{h}_W$ is a stationary subset of $\omega_1$ (in $V$), for all $i < \mu$. Let $\mathcal{J}$ be the restriction of the nonstationary ideal to $R$.  Let $U$ be $(V,\mathcal{J}^+)$-generic and $j: V \to_U N$ be the resulting generic elementary embedding.  Then standard applications of Los' Theorem (see Foreman~\cite{MattHandbook}) ensure that $j$ has the desired properties mentioned above. In particular, $[\text{id}_R]_U = j[H^V_\theta]$, $j \restriction H^V_\theta =[W \mapsto \sigma_W]_U$, $H^V_\theta=[W \mapsto \bar{W}]_U$ and is an element of the (transitivized) wellfounded part of $N$, $\text{crit}(j) = \omega_2^V$, and $H:=[ W \mapsto \bar{h}_W ]_U$ is an $(H^V_\theta,\mathbb{Q} \restriction q)$-generic filter such that from the point of view of the generic ultrapower $N$, $(\dot{S}_i)_H$ is a stationary subset of $\omega_1$ for all $i < \mu = j(\mu)$.  Since $\theta$ was chosen sufficiently large from the start, $H$ is also $V$-generic.

Now we prove the converse.  Assume \ref{item_FAchar_embedding}, and that $\mathbb{Q} \in \Gamma$.  Fix any $\mu$-sequence $\dot{\vec{S}} = \langle \dot{S}_i \ : \ i < \mu \rangle$ of $\mathbb{Q}$-names for stationary subsets of $\omega_1$.  By Woodin's Theorem \ref{thm_VariantWoodinTheorem} it suffices to show that if $F: [H_\theta]^{<\omega} \to H_\theta$ with $F \in V$, then there is a $W \in \wp_{\omega_2}(H_\theta)$ that is closed under $F$ such that $\omega_1 \subset W$ and there exists a $\dot{\vec{S}}$-correct, $(W,\mathbb{Q})$-generic, $\omega_1$-stationary correct filter.  Let $j: V \to N$ and $H \in N$ be as in assumption \ref{item_FAchar_embedding}.   Now $j[H^V_\theta]$ is closed under $j(F)$, and $j[H^V_\theta] \in N$ by assumption.  Also note that $j[H] \in N$, since $H \in N$, $j \restriction H^V_\theta  \in N$, and $\mathbb{Q} \in H^V_\theta$.  It is routine to check that $j[H^V_\theta]$ and $j[H]$ witness that $N \models$ ``there is a model containing $\omega_1$ and closed under $j(F)$ for which there exists a $j(\dot{\vec{S}})=\langle j(\dot{S}_i) \ : \ i < \mu = j(\mu) \rangle$-correct, generic filter for $j(\mathbb{Q})$".  Then elementarity of $j$ yields the analogous statement in $V$, completing the proof.
\end{proof}

We now prove Theorem \ref{thm_PreservationFA} (from page \pageref{thm_PreservationFA}), which is our main tool for preservation of forcing axioms.  It makes preservation of forcing axioms closely resemble arguments where large cardinal embeddings are lifted after a ``preparation on the $j$ side", where the preparation is designed to provide a master condition.  Both directions of Theorem \ref{thm_VariantWoodinTheorem} will be used in the proof of Theorem \ref{thm_PreservationFA}:  the forward direction of Theorem \ref{thm_VariantWoodinTheorem} (using the class $\Gamma$ in $V$) will be used to obtain a generic embedding $j: V \to N$ with the relevant properties.  We will generically extend the embedding $j$ to domain $V^{\mathbb{P}}$ (using the master condition provided by $\dot{\mathbb{R}}$), and then apply the backwards direction of Theorem \ref{thm_VariantWoodinTheorem} (using the class $\dot{\Delta}$ in $V^{\mathbb{P}}$) to ensure that $\text{FA}^{+\mu}(\dot{\Delta})$ holds in $V^{\mathbb{P}}$.

\begin{proof}

(of Theorem \ref{thm_PreservationFA}):  In order to show that $\text{FA}^{+\mu}(\dot{\Delta})$ holds in $V^{\mathbb{P}}$, it suffices to verify that clause \ref{item_FAchar_embedding} of Theorem \ref{thm_VariantWoodinTheorem} holds in $V^{\mathbb{P}}$ (with respect to the class $\dot{\Delta}$).  More precisely, we prove that $1_{\mathbb{P}}$ forces that ``whenever $\mathbb{Q}$ is a poset in $\dot{\Delta}$, $q \in \mathbb{Q}$, and $\langle \dot{S}_i \ : \ i < \mu \rangle$ is a $\mu$-sequence of $\mathbb{Q}$-names for stationary subsets of $\omega_1$, then there exists a generic elementary embedding 
\[
\pi: V[\dot{G}] \to M
\]
(where $\dot{G}$ is the $\mathbb{P}$-name for its generic) such that $\pi \restriction H^{V[\dot{G}]}_\theta \in M$, $\text{crit}(\pi) = \omega_2^{V[\dot{G}]}$, $H^{V[\dot{G}]}_\theta$ is in the wellfounded part of $M$, and there is an $H \in M$ that is $(V[\dot{G}],\mathbb{Q})$-generic such that $q \in H$ and for every $i < \mu$, $M \models$ $(\dot{S}_i)_{H}$ is a stationary subset of $\omega_1$."

So let $p$ be an arbitrary condition in $\mathbb{P}$, $\dot{\mathbb{Q}}$ a $\mathbb{P}$-name for a poset in $\dot{\Delta}$, $\dot{q}$ a $\mathbb{P}$-name for a condition in $\dot{\mathbb{Q}}$, and $\dot{\vec{S}} = \langle \dot{S}_i \ : \ i < \mu \rangle$ a $\mu$-sequence of $\mathbb{P}*\dot{\mathbb{Q}}$-names for stationary subsets of $\omega_1$.  Let $\dot{\mathbb{R}}$ be as in the hypotheses of Theorem \ref{thm_PreservationFA}.  Fix a regular $\theta$ such that all objects mentioned so far are in $H_\theta$.  Note that by the assumptions about $\dot{\mathbb{R}}$, each $\dot{S}_i$ remains stationary in $V^{\mathbb{P}*\dot{\mathbb{Q}}*\dot{\mathbb{R}}}$; so $\dot{\vec{S}}$ can be regarded as a sequence of $\mathbb{P}*\dot{\mathbb{Q}}*\dot{\mathbb{R}}$-names of stationary subsets of $\omega_1$.  Since  $\mathbb{P}*\dot{\mathbb{Q}}*\dot{\mathbb{R}}$ is an element of $\Gamma$ by assumption, Theorem \ref{thm_VariantWoodinTheorem} ensures that there is, in some generic extension $W$ of $V$, a generic elementary embedding $j: V \to N$ such that 
\begin{enumerate}
 \item $H^V_{(2^\theta)^+}$ is in the wellfounded part of $N$, and has size $\omega_1$ in $N$;
 \item $j \restriction H^V_{(2^\theta)^+} \in N$; 
 \item $\text{crit}(j) = \omega_2^V$;
 \item\label{item_YetAnotherStatCorrect} There is a $G*H*K \in N$ that is $V$-generic for $\mathbb{P}*\dot{\mathbb{Q}}*\dot{\mathbb{R}}$, and such that $(p,\dot{q}, \dot{1}) \in G*H*K$ and $N \models$ ``$(\dot{S}_i)_{G*H*K}=(\dot{S}_i)_{G*H}$ is a stationary subset of $\omega_1$, for all $i < \mu$."
\end{enumerate}

Note that $j[G] \in N$ because $G \in N$ and $j \restriction H^V_{(2^\theta)^+} \in N$.  So $j:V \to N$ and $G*H*K$ satisfy all the hypothesis in the ``if-then" clause \ref{item_ClauseWhenever} of the theorem we are currently proving.  So by that clause, there is some $p' \in j(\mathbb{P})$ that $N$ believes is a lower bound of $j[G]$.  Now $p'$ may be in the illfounded part of $N$, but this will not matter.  Recall that $W$ is the generic extension of $V$ where the embedding $j: V \to N$ resides.  Work in $W$ for the moment.  Then even if $N$ is illfounded, the fact that $j(\mathbb{P})$ is a partial order is upward absolute to $W$, and $W$-generics are also $N$-generics.  More precisely, in $W$ consider the $\in_N$-extension $\mathcal{P}'$ of $j(\mathbb{P})$---i.e.\ $\mathcal{P}':= \{ x \in N \  : \ x \in^N j(\mathbb{P}) \}$---and order $\mathcal{P}'$ by $x \le y$ iff $N \models x \le_{j(\mathbb{P})} y$; it is routine to check that $\mathcal{P}'$ is a partial order in $W$.  Choose a $(W,\mathcal{P}')$-generic $G'$ such that $p' \in G'$.  Then again it is routine to see that $G'$ is also $\big( W, j(\mathbb{P}) \big)$-generic, in the sense that for every $D \in N$ such that $N \models$ ``$D$ is dense in $j(\mathbb{P})$" there is some $q \in G'$ such that $q \in^N D$.

Now consider the generic extension $N[G']$.  A word is in order about what is meant by $N[G']$, since the standard recursive definition of name-evaluation by a generic does not make sense when the ground model is illfounded.  One way to make sense of $N[G']$ is via quotients of Boolean-valued models.\footnote{Alternatively, one could use the fact that $j \restriction H^V_\theta \in N$ to work entirely \emph{within} $N$---more precisely, with the forcing relation of $N$---for the remainder of the proof. This alternative would involve extending $j \restriction H^V_\theta: H^V_\theta \to N$ generically, rather than extending the entire map $j:V \to N$ as is done below.  But the result would be the same, namely, in this alternative context one would use generic liftings of $j \restriction H^V_\theta$ to prove that $H^V_\theta[G] \models \text{FA}(\mathbb{Q})$, which would imply that $V[G] \models \text{FA}(\mathbb{Q})$.}  In the current context, let $\mathbb{B}:= \text{ro}^N\big( j(\mathbb{P}) \big)$ and consider the quotient $N^{\mathbb{B}}/ G'$ of the $\mathbb{B}$-valued model $N^{\mathbb{B}}$ (see Hamkins-Seabold~\cite{HamkinsSeabold}).  Then since $G'$ is generic over $N$, Lemma 13 and Theorem 16 of \cite{HamkinsSeabold} ensure that there is a definable isomorphic copy of $N$ inside $N^{\mathbb{B}}/ G'$---which we identify with $N$---such that $N^{\mathbb{B}}/G' \models$ ``I am the forcing extension of $N$ by $G'$."  Viewed in this way, $N[G']$ is simply $N^{\mathbb{B}}/G'$, and if $\tau$ is a $j(\mathbb{P})$-name in $N$, then the name evaluation $\tau_{G'}$ makes sense if computed from the point of view of $N^{\mathbb{B}}/ G'$ (which believes that $N$ is wellfounded).

Since $p' \in G'$ and $p'$ was stronger than every element of $j [G]$, elementarity of $j:V \to N$ ensures that $j[G] \subseteq G'$.  Then the standard argument shows that the map 
\[
\hat{j}:V[G] \to N[G']
\]
 defined by $\sigma_G \mapsto j(\sigma)_{G'}$ is well-defined and elementary (here $j(\sigma)_{G'}$ is computed in $N^{\mathbb{B}}/ G' =N[G']$, which makes sense as described above).  Note that $\hat{j}$ is definable in the extension $W[G']$, which (since $G \in W$ and $W$ was a generic extension of $V$) is a generic extension of $V[G]$.

Next we observe that $\mathbb{P}$ must have been $< \omega_2$-distributive; this will guarantee that $\text{crit}(\hat{j}) = \text{crit}(j) = \omega_2^V = \omega_2^{V[G]}$.  If $\mathbb{P}$ were not $<\omega_2$-distributive, then there would be some name $\dot{\vec{\alpha}} = \langle \dot{\alpha}_i \ : \ i < \omega_1^V \rangle \in H^V_\theta$ for a new $\omega_1^V$-length sequence of ordinals, and hence $j(\dot{\vec{\alpha}}) = \langle j(\dot{\alpha}_i) \ : \ i < \omega_1^V \rangle$ would be a $j(\mathbb{P})$-name for a new $j(\omega_1^V) = \omega_1^V$-length sequence of ordinals.  Now $N$ sees that the condition $p'$ extends the $\big(j[H^V_\theta], j(\mathbb{P}) \big)$-generic filter $j[G]$, and hence that $p'$ is a \emph{total master condition} for the model $j[H^V_\theta]$ (i.e.\ for every $D \in j[H^V_\theta]$ such that $D$ is dense in $j(\mathbb{P})$, there is a condition weaker than $p'$ in $D \cap j[H^V_\theta]$).  It follows that $p'$ already decides all values from the sequence $\langle j(\dot{\alpha}_i) \ : \ i < \omega_1^V \rangle$.  Since $p' \in G'$, $N[G']$ sees that $\hat{j}(\dot{\vec{\alpha}}_G) = \big( j(\dot{\vec{\alpha}})  \big)_{G'}$ is already an element of the ground model $N$.  By elementarity of $\hat{j}$, $V[G]$ sees that $\dot{\vec{\alpha}}_G$ is already an element of the ground model $V$, which is a contradiction.

Let $\mathbb{Q}:=\dot{\mathbb{Q}}_G$.  Since $j \restriction  H^V_\theta \in N$ and $G*H \in N$, it follows that $H \in N[G']$ and
\[
\hat{j} \restriction ( H^V_\theta[G]) = \langle \sigma_G \mapsto j(\sigma)_{G'} \ : \ \sigma \in H^V_\theta \cap N^{j(\mathbb{P})}  \rangle
\]
 is an element of $N[G']$. Furthermore, $q:=\dot{q}_G$ is an element of $H$, by \eqref{item_YetAnotherStatCorrect}.

Finally, we need to verify that from the point of view of $N[G']$, each $(\dot{S}_i)_{G*H}$ is stationary.  Indeed, each $(\dot{S}_i)_{G*H}$ is stationary from the point of view of $N$, by \eqref{item_YetAnotherStatCorrect}.  Furthermore, since $\mathbb{P}$ is a regular suborder of $\mathbb{P}*\dot{\mathbb{Q}}*\dot{\mathbb{R}}$ and the latter is in $\Gamma$, Observation \ref{obs_FMS} ensures that $\mathbb{P}$ is $\omega_1$-SSP from the point of view of $V$.  By elementarity of $j$, $j(\mathbb{P})$ is $\omega_1$-SSP from the point of view of $N$.  Hence, each $(\dot{S}_i)_{G*H}$ remains stationary in $N[G']$.
\end{proof}

Theorem \ref{thm_PreservationFA} gives an alternative proof of several preservation results in the literature.  We highlight a few below.

\subsection{Examples from the literature}\label{sec_ExamplesFApres}

\subsubsection{The Larson example and generalizations}\label{sec_LarsonExample}

Larson's~\cite{MR1782117} theorem that MM is preserved by $<\omega_2$-directed closed forcing can be generalized to show the same is true for virtually \emph{any} of the standard forcing axioms ($\text{MA}_{\omega_1}$, PFA, MM, fragments of MM, etc.) and their ``$+\mu$" versions:
\begin{theorem}
Suppose $\Gamma$ is closed under restrictions and under 2-step iterations, and let $\mu$ be any cardinal $\le \omega_1$.  Then $\text{FA}^{+\mu}(\Gamma)$ is preserved by $<\omega_2$-directed closed forcing.  
\end{theorem}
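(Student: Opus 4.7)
The plan is to invoke the Forcing Axiom Preservation Theorem (Theorem \ref{thm_PreservationFA}) with $\dot{\mathbb{R}}$ taken to be (a $\mathbb{P}*\dot{\mathbb{Q}}$-name for) the trivial poset, and with $\dot{\Delta}$ the canonical $\mathbb{P}$-name for (the reinterpretation of) $\Gamma$ in $V[\dot{G}]$. I shall use the standing assumption $\mathbb{P} \in \Gamma$, which is automatic for any reasonable $\Gamma$ closed under $2$-step iterations (proper, semiproper, $\omega_{1}$-SSP, etc., all contain the $<\!\!\omega_{2}$-directed closed posets, since such $\mathbb{P}$ add no $\omega$-sequences of ordinals).

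Fixing such a $\dot{\mathbb{Q}}$ and a $\mu$-sequence $\langle \dot{S}_{i} \ : \ i < \mu \rangle$ of $\mathbb{P}*\dot{\mathbb{Q}}$-names for stationary subsets of $\omega_{1}$, the first two clauses of Theorem \ref{thm_PreservationFA} are immediate: clause \ref{item_3stepinGamma} reduces to $\mathbb{P}*\dot{\mathbb{Q}} \in \Gamma$, which is given by $\mathbb{P} \in \Gamma$ together with closure of $\Gamma$ under $2$-step iterations; clause \ref{item_R_PreserveStat} is trivially satisfied when $\dot{\mathbb{R}}$ is trivial.

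The substantive step is clause \ref{item_ClauseWhenever}, and this is where the $<\!\!\omega_{2}$-directed closure of $\mathbb{P}$ enters. Given the hypothesised generic embedding $j : V \to N$ with $\text{crit}(j) = \omega_{2}^{V}$ and a $(H^{V}_{\theta}, \mathbb{P}*\dot{\mathbb{Q}})$-generic $G*H$ inside $N$, I would first observe that $\omega_{1}^{N} = \omega_{1}^{V}$: the map $j$ fixes every ordinal $\le \omega_{1}^{V}$, and elementarity forces $N \models$ ``$\omega_{1}^{V}$ is a cardinal'', which combined with $|H^{V}_{\theta}|^{N} = \omega_{1}$ pins down $\omega_{1}^{N}$. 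Next, elementarity and the $<\!\!\omega_{2}$-directed closure of $\mathbb{P}$ yield that $N$ believes $j(\mathbb{P})$ is $<\!\!j(\omega_{2}^{V})$-directed closed, while $j(\omega_{2}^{V}) > \omega_{2}^{V} > \omega_{1}^{N}$. Since $G \subseteq \mathbb{P} \subseteq H^{V}_{\theta}$, we have $j[G] \subseteq j[H^{V}_{\theta}]$, a set of $N$-cardinality $\omega_{1}^{N}$; so from $N$'s vantage point $j[G]$ is a directed subset of $j(\mathbb{P})$ of cardinality strictly below $j(\omega_{2}^{V})$, and directed closure supplies the required lower bound.

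Clause \ref{item_ClauseWhenever} is the only real obstacle, and it crucially requires the \emph{directed} part of the closure hypothesis: bare $<\!\!\omega_{2}$-closedness of $\mathbb{P}$ would not suffice, since $j[G]$ is in general a directed set and not a chain.
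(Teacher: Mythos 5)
Your proposal is correct and follows essentially the same route as the paper: take $\dot{\mathbb{R}}$ trivial, get $\mathbb{P}*\dot{\mathbb{Q}} \in \Gamma$ from closure under 2-step iterations, and verify clause \ref{item_ClauseWhenever} of Theorem \ref{thm_PreservationFA} by noting that $N$ sees $j[G]$ as a directed subset of $j(\mathbb{P})$ of size $\omega_1 < j(\omega_2^V)$ while elementarity makes $j(\mathbb{P})$ sufficiently directed closed in $N$. Your explicit flagging of the hypothesis $\mathbb{P} \in \Gamma$ (which the paper's proof uses silently when invoking closure under 2-step iterations) is a reasonable and slightly more careful reading of the same argument.
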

\begin{proof}
Suppose $V \models \text{FA}^{+\mu}(\Gamma)$, $\mathbb{P}$ is $<\omega_2$-directed closed, $\dot{\mathbb{Q}}$ is a $\mathbb{P}$-name for a poset in the class $\dot{\Gamma}$, and $\dot{\vec{S}} = \langle \dot{S}_i \ : \ i < \mu \rangle$ is a $\mu$-sequence of $\mathbb{P}*\dot{\mathbb{Q}}$-names for stationary subsets of $\omega_1$.  Since $\Gamma$ is closed under 2-step iterations, $\mathbb{P}*\dot{\mathbb{Q}} \in \Gamma$.  Let $\dot{\mathbb{R}}$ be the $\mathbb{P}*\dot{\mathbb{Q}}$-name for the trivial forcing.  Then clearly $\mathbb{P}*\dot{\mathbb{Q}} (*\dot{\mathbb{R}}) \in \Gamma$ also.  Note that $\dot{\mathbb{R}}$ trivially preserves the stationarity of each $\dot{S}_i$.  Thus,  clauses \ref{item_3stepinGamma} and \ref{item_R_PreserveStat} of the hypotheses of Theorem \ref{thm_PreservationFA} are satisfied.

We now verify that clause \ref{item_ClauseWhenever} of the assumptions of Theorem \ref{thm_PreservationFA} holds.  Suppose $j: V \to N$ and $G*H (*K)$ are as in the hypotheses of clause \ref{item_ClauseWhenever} of Theorem \ref{thm_PreservationFA}.  Since $|H^V_\theta|^N = \omega_1$, $\mathbb{P} \in H^V_\theta$, $G$ is a filter, and $j \restriction H^V_\theta \in N$, then $j[G] \in N$ and $N$ believes that $j[G]$ is a directed subset of $j(\mathbb{P})$ of size $<\aleph_2$.  Moreover, by elementarity of $j$, $N$ believes that $j(\mathbb{P})$ is $<\aleph_2$-directed closed.  Hence $N$ believes $j[G]$ has a lower bound.  So clause \ref{item_ClauseWhenever} of Theorem \ref{thm_PreservationFA} is also satisfied.   So, Theorem \ref{thm_PreservationFA} ensures that $V^{\mathbb{P}} \models \text{FA}^{+\mu}(\dot{\Gamma})$.
\end{proof}

\subsubsection{The Beaudoin-Magidor example}\label{sec_BedMagExample}

The classic Beaudoin-Magidor theorem that PFA is consistent with a nonreflecting stationary subset of $S^2_0$ (see \cite{MR1099782}) can be re-proved as follows (with $\Gamma=$ the class of proper posets):  assume PFA in $V$, and let $\mathbb{P}$ be the forcing to add a nonreflecting stationary subset $\dot{S}$ of $S^2_0$ with initial segments.  Let $\dot{\mathbb{Q}}$ be a $\mathbb{P}$-name for a proper poset.  Let $\dot{\mathbb{R}}$ be the $\mathbb{P}*\dot{\mathbb{Q}}$-name for the poset to kill the stationarity of $\dot{S}$ using countable conditions.  Then $\mathbb{P}*\dot{\mathbb{Q}}*\dot{\mathbb{R}}$ is proper (see \cite{MR1099782}).  Moreover, if $j: V \to N$ and $S*H*K \in N$ are as in the hypotheses of clause \ref{item_ClauseWhenever} of Theorem \ref{thm_PreservationFA}, the presence of the $\dot{\mathbb{R}}$-generic club $K$ ensures that $S = j[ S]$ is nonstationary in $\omega_2^V$, and hence is a condition in $j(\mathbb{P})$ (and clearly a lower bound for $j[S]$).  So clause \ref{item_ClauseWhenever} of Theorem \ref{thm_PreservationFA} is satisfied, and so PFA holds in $V[S]$. 

The poset $\mathbb{P}$ in this example is actually a member of the class of posets considered in the next example.

\subsubsection{The Yoshinobu example}\label{sec_YoshinobuExample} 
Yoshinobu's~\cite{MR3037549} theorem about preservation of PFA by $\omega_1 + 1$ operationally closed forcings can also be viewed as a consequence of Theorem \ref{thm_PreservationFA}.  Suppose PFA holds and $\mathbb{P}$ is $\omega_1 + 1$ operationally closed; roughly, this means that in the game $\mathfrak{G}_{\mathbb{P}}$ of length $\omega_1 + 1$ where the players create a descending chain of conditions with Player II playing at limit stages (and losing if she cannot play at some limit stage $\alpha \le \omega_1$), Player II has a winning strategy $\sigma$ that only depends on the ``current position" of the game (i.e.\ on the boolean infimum of the conditions played so far and the current ordinal stage, but \emph{not} on the history of the game so far).  Let $\dot{\mathbb{Q}}$ be a $\mathbb{P}$-name for a proper poset.  Using properness of $\dot{\mathbb{Q}}$ and the fact that $\sigma$ only depends on the current position, Yoshinobu designs a $\mathbb{P}*\dot{\mathbb{Q}}$-name $\dot{\mathbb{R}}$ such that the 3-step iteration is proper, and if $G*H*K$ is generic over $V$ for $\mathbb{P}*\dot{\mathbb{Q}}*\dot{\mathbb{R}}$, then in $V[G*H*K]$ there exists a play $\mathcal{P}$ of length $\omega_1$ such that all proper initial segments of $\mathcal{P}$ are in $V$, player II used $\sigma$ at every countable stage of $\mathcal{P}$, and the conditions played in $\mathcal{P}$ generate $G$.  Then, if $G*H*K \in N$ and $j: V \to N$ are as in clause \ref{item_ClauseWhenever} of Theorem \ref{thm_PreservationFA}---so in particular $j \restriction H^V_\theta$, $j[\mathcal{P}]$, and $j[G]$ are elements of $N$---then $N$ believes that $j[\mathcal{P}]$ is a $j(\omega_1) = \omega_1$-length play of the game $j(\mathfrak{G}_{\mathbb{P}})$ where player II used $j(\sigma)$ at every countable stage, and that $j[G]$ is generated by the conditions played in $j[\mathcal{P}]$.  Hence, since $j(\sigma)$ was used along the way, the conditions played in $j[\mathcal{P}]$ have a lower bound, which (since $j[G]$ is generated by $j[\mathcal{P}]$) is also a lower bound for $j[G]$.  Then $V^{\mathbb{P}} \models \text{PFA}$ by Theorem \ref{thm_PreservationFA}.

\section{Separation of appproachability properties}\label{sec_SeparationApproach}

The notions of \textbf{disjoint club} and \textbf{disjoint stationary} sequences on $\omega_2$ were introduced in Friedman-Krueger~\cite{MR2276627} and Krueger~\cite{MR2502487}, respectively.  We say that $\omega_2$ carries a disjoint club (\emph{resp.\ disjoint stationary}) sequence iff there is a stationary $S \subset S^2_1$ and a sequence $\langle x_\gamma \ : \ \gamma \in S  \rangle$ such that every $x_\gamma$ is a club (\emph{resp. stationary}) subset of $[\gamma]^\omega$, and $x_\gamma \cap x_{\gamma'} = \emptyset$ whenever $\gamma \ne \gamma'$ are both in $S$.  Krueger proved:

\begin{theorem}[Theorems 6.3 and 6.5 of Krueger~\cite{MR2502487}]\label{thm_Krueger_DisjointSequences}
Assume $2^{\omega_1} = \omega_2$.
\begin{itemize}
 \item The existence of a disjoint club sequence on $\omega_2$ is equivalent to the assertion that $\text{IU} \ne \text{IS}$ in $\wp_{\omega_2}(H_{\omega_2})$; i.e.\ there are stationarily many $W \in \wp^*_{\omega_2}(H_{\omega_2})$ that are internally unbounded, but not internally stationary.
 \item The existence of a disjoint stationary sequence on $\omega_2$ is equivalent to the assertion that $\text{IU} \ne \text{IC}$ in $\wp_{\omega_2}(H_{\omega_2})$; i.e.\ there are stationarily many $W \in \wp^*_{\omega_2}(H_{\omega_2})$ that are internally unbounded, but not internally club.
\end{itemize}
\end{theorem}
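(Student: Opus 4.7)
Both equivalences flow from a common translation. Since $2^{\omega_1}=\omega_2$ gives $|H_{\omega_2}|=\omega_2$, fix a bijection $\Phi:\omega_2\to H_{\omega_2}$. As in the sketch of Lemma~\ref{lem_ApproachAndInclusions} following Observation~\ref{obs_InternalPartProjects},
\[
\text{IU}\cap\wp_{\omega_2}(H_{\omega_2}) =^* \{\Phi[\gamma]:\gamma\in S^2_1\},
\]
so each bullet reduces to characterizing the stationary set of $\gamma\in S^2_1$ on which $\Phi[\gamma]$ fails the relevant internal-approachability property. For $\gamma\in S^2_1$ the map $\hat\pi_\gamma:u\mapsto\Phi[u]$ is a bijection $[\gamma]^\omega\to[\Phi[\gamma]]^\omega$, and transporting the internal part $\Phi[\gamma]\cap[\Phi[\gamma]]^\omega$ through it yields
\[
I^*_\gamma \;:=\;\{u\in[\gamma]^\omega:\delta_u<\gamma\},\qquad \delta_u := \Phi^{-1}(\Phi[u]).
\]
Under this dictionary $\Phi[\gamma]\in\text{IS}$ iff $I^*_\gamma$ is stationary in $[\gamma]^\omega$, and $\Phi[\gamma]\in\text{IC}$ iff $I^*_\gamma$ contains a club. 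The first bullet then becomes: there is a disjoint club sequence on $\omega_2$ iff there are stationarily many $\gamma$ with $I^*_\gamma$ nonstationary; the second: there is a disjoint stationary sequence iff there are stationarily many $\gamma$ with $[\gamma]^\omega\setminus I^*_\gamma$ stationary.

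For the ``$\Leftarrow$'' direction of the first bullet, let $E\subseteq S^2_1$ be the witnessing stationary set and pick, for each $\gamma\in E$, a club $C_\gamma\subseteq[\gamma]^\omega\setminus I^*_\gamma$; every $u\in C_\gamma$ then has $\delta_u\ge\gamma$. The plan is to refine $E$ to a stationary $E'$ and each $C_\gamma$ to a subclub $C'_\gamma\subseteq C_\gamma$ so that $\{\delta_u:u\in C'_\gamma\}\subseteq[\gamma,\gamma^+)$, where $\gamma^+:=\min(E'\setminus(\gamma{+}1))$. For $\gamma<\gamma'$ in $E'$ and $u\in C'_\gamma\cap C'_{\gamma'}$, the inequalities $\delta_u<\gamma'$ (from $u\in C'_\gamma$) and $\delta_u\ge\gamma'$ (from $u\in C'_{\gamma'}$) then collide, so the $C'_\gamma$ are pairwise disjoint. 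The thinning of $E$ is a Fodor-type closure that makes $E'$ grow past the $\delta$-range of every earlier $C'_{\gamma'}$. The ``$\Leftarrow$'' of the second bullet is structurally parallel: the $C_\gamma$ are replaced by stationary complements of $I^*_\gamma$ and trimmed to the $\Phi$-enumeration interval $[\gamma,\gamma^+)$; the subtle step is verifying that the trimmed sets remain stationary, which requires arranging, via a Fodor-type closure, that enough countable subsets of $\gamma$ are newly $\Phi$-listed in each designated interval.

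For the ``$\Rightarrow$'' direction in either bullet, given a disjoint (club or stationary) sequence $\langle x_\gamma:\gamma\in S\rangle$, construct $\Phi$ by transfinite recursion on $\omega_2$ under the constraint ``$\delta_u\ge\gamma$ for every $\gamma\in S$ and $u\in x_\gamma$''. This places $x_\gamma\subseteq[\gamma]^\omega\setminus I^*_\gamma$, making $I^*_\gamma$ nonstationary (first bullet) or its complement stationary (second bullet) for every $\gamma\in S$; by the dictionary this produces stationarily many $\Phi[\gamma]\in\text{IU}\setminus\text{IS}$ (respectively $\text{IU}\setminus\text{IC}$). At stage $\eta<\omega_2$ the forbidden candidates for $\Phi(\eta)$ are the countable sets $\Phi[u]\in H_{\omega_2}$ already determined by $\Phi\restriction\sup(u)$ for $u\subseteq\eta$ with $u\in x_\gamma$ and $\gamma>\eta$; disjointness of the $x_\gamma$'s assigns each such $u$ a unique $\gamma$, keeping the forbidden family small enough to leave room in $H_{\omega_2}\setminus\Phi[\eta]$ to pick $\Phi(\eta)$ compatibly with a bookkeeping enumeration of $H_{\omega_2}$ securing surjectivity. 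The main obstacle is precisely this recursive bookkeeping: the constraint couples $\Phi(\eta)$ with $\Phi$-values both below and above $\eta$, and one must simultaneously ensure bijectivity; disjointness of the $x_\gamma$'s is exactly the ingredient that prevents the accumulated conflicts from exhausting $H_{\omega_2}\setminus\Phi[\eta]$ at any stage.
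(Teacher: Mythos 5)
First, a framing point: the paper does not prove Theorem \ref{thm_Krueger_DisjointSequences} at all --- it is quoted from Krueger with a citation --- so the comparison is really with Krueger's published argument. Your dictionary (transporting the internal part of $\Phi[\gamma]$ through $u\mapsto\Phi[u]$ to the set $I^*_\gamma$) is correct, and both of your ``$\Leftarrow$'' directions are essentially the standard argument. The one ingredient you leave implicit there is that $[\gamma]^\omega$ carries a club of size $\omega_1$ (the trace of a filtration of $\gamma$); intersecting $C_\gamma$ (resp.\ the stationary set $[\gamma]^\omega\setminus I^*_\gamma$) with such a club leaves a club (resp.\ a stationary set) on which $\{\delta_u\}$ has size $\le\omega_1$ and is therefore bounded by some $\beta_\gamma<\omega_2$, and the Fodor-type closure then only needs $E'$ to skip past each $\beta_\gamma$. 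In particular the ``subtle step'' you flag in the stationary case dissolves: you do not need to arrange that enough sets are newly $\Phi$-listed in each interval, because stationary-meet-club is stationary and the trimming by $\delta_u<\gamma^+$ then costs nothing.

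The genuine gap is in the ``$\Rightarrow$'' direction. Your recursion must guarantee that for every $u\in x_\gamma$ the set $\Phi[u]$ is not enumerated before stage $\gamma$, but $\Phi[u]$ is only \emph{determined} at stage $\sup(u)$: an element placed at an earlier stage $\xi$ can retroactively turn out to equal $\Phi[u]$ for some $u\in x_\gamma$ with $\sup(u)>\xi<\gamma$, and avoiding the already-determined forbidden values at each stage does nothing about this. Moreover your stated reason that there is room --- disjointness keeps the forbidden family small --- is false as stated: $\{\Phi[u]\ :\ u\subseteq\eta,\ u\in\bigcup_\gamma x_\gamma,\ \gamma(u)>\eta\}$ can have size $|\eta|^\omega=2^{\omega_1}=\omega_2$, the same as the remaining pool. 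The construction can be repaired (never enumerate a countable set before all of its elements, so that $\Phi^{-1}[h]$ is determined before $h$ is ever placed, and then postpone $h$ until stage $\gamma(\Phi^{-1}[h])$ when that preimage lies in some $x_\gamma$; disjointness is exactly what makes this release time well defined and $<\omega_2$, and surjectivity is a routine scheduling argument), but none of that mechanism is in your write-up. You should also know that Krueger's proof of this direction avoids building a special bijection entirely: for $N\prec(H_{\omega_3},\in,\Phi,\vec x)$ with $|N|=\omega_1\subset N$ and $\gamma:=N\cap\omega_2\in S$, no $u\in x_\gamma$ can lie in $N$, since by disjointness $N$ could then define $\gamma$ as the unique index with $u\in x_\gamma$, contradicting $\gamma\notin N$; hence a filtration of $N$ traces on $\omega_2$ a club of $[\gamma]^\omega$ meeting $x_\gamma$ only in sets outside $N$, which makes $N\cap H_{\omega_2}=\Phi[\gamma]$ internally unbounded but not internally stationary (resp.\ not internally club). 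That route is shorter and free of bookkeeping.
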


\subsection{Proof of Theorem \ref{thm_Cox_PFAplus1necessary}}\label{sec_ProofThmCox_PFAplus1necessary}

In this section we show that PFA does not imply the existence of a disjoint stationary sequence on $\omega_2$ (and thereby show that Krueger's Theorem \ref{thm_Krueger_PFAplus1} is sharp).  By Theorem \ref{thm_Krueger_DisjointSequences}, it suffices to find a model of PFA that also satisfies
\[
\text{IC} \cap \wp_{\omega_2}(H_{\omega_2}) =^* \text{IU} \cap \wp_{\omega_2}(H_{\omega_2}).
\]

Assume $V$ is a model of PFA; then $2^{\omega_1} = \omega_2$ so we can fix a bijection $\Phi: \omega_2 \to H_{\omega_2}$.  Let $\text{IC}^*$ be the set of $\gamma \in S^2_1$ such that $\Phi[\gamma] \cap \omega_2 = \gamma$ and $\Phi[\gamma]$ is an IC model.  It is routine to check that all but nonstationarily many $W \in \text{IC} \cap \wp_{\omega_2}(H_{\omega_2})$ are of the form $\Phi[W \cap \omega_2]$ where $W \cap \omega_2 \in \text{IC}^*$.  

Let $\mathbb{C}(\text{IC}^*)$ be the poset of closed, bounded $c \subset \omega_2$ such that $c \cap S^2_1 \subset \text{IC}^*$, ordered by end-extension.  By an argument similar to Proposition 4.4 of Krueger~\cite{MR2674000}, $\mathbb{C}(\text{IC}^*)$ is $<\omega_2$ distributive.\footnote{Briefly: if $W \prec (H_{\omega_3},\in,\Phi)$ and $W \in \text{IA}$, then $W \cap H_{\omega_2} = \Phi[W \cap \omega_2]$, $W \cap \omega_2 \in \text{IC}^*$ (since $\text{IA} \subseteq \text{IC}$), and any condition in $W \cap \mathbb{C}(\text{IC}^*)$ can be extended to a $W$-generic tower with supremum $W \cap \omega_2$ (this argument uses internal approachability of $W$).  Then since $W \cap \omega_2 \in \text{IC}^*$, that generic tower has a lower bound, obtained by placing $W \cap \omega_2$ at the top of the tower.  In summary, every condition in $W$ can be extended to a condition whose upward closure is a $(W,\mathbb{C}(\text{IC}^*)$-generic filter.  Since the set of such $W$ is stationary, standard arguments then imply that $\mathbb{C}(\text{IC}^*)$ is $<\omega_2$ distributive.  }  In particular $H_{\omega_2}$ is unchanged.  Then the club added by $\mathbb{C}(\text{IC}^*)$ witnesses that, in $V^{\mathbb{C}(\text{IC}^*)}$, almost every $\gamma \in S^2_1$ has the property that $\Phi[\gamma] \in \text{IC}$, and hence that almost every element of $\text{IU} \cap \wp_{\omega_2}(H_{\omega_2})$ is internally club.  

It remains to show that $V^{\mathbb{C}(\text{IC}^*)} \ \models \text{PFA}$.  Let $\dot{\mathbb{Q}}$ be a $\mathbb{C}(\text{IC}^*)$-name for a proper poset.  The poset $\mathbb{C}(\text{IC}^*)$ is $\sigma$-closed, and hence $\mathbb{C}(\text{IC}^*) * \dot{\mathbb{Q}}$ is proper.  In particular, $V \cap  [\omega_2^V]^\omega$ remains stationary.  Let $\dot{\mathbb{R}}$ be the $\mathbb{C}(\text{IC}^*) * \dot{\mathbb{Q}}$-name for the poset from Definition \ref{def_DecoratedPoset} that shoots a continuous $\omega_1$-chain through
$V \cap  [\omega_2^V]^\omega$.\footnote{For this application we could just as well have used a version of that poset using countable conditions, but we choose to stick with Definition \ref{def_DecoratedPoset} since it works just as well.}  By Lemma \ref{lem_ClubThruVisProper}, the poset $\mathbb{C}(\text{IC}^*) * \dot{\mathbb{Q}} * \dot{\mathbb{R}}$ is proper.  

Now suppose $j: V \to N$ is a generic elementary embedding with the properties listed in clause \ref{item_ClauseWhenever} of Theorem \ref{thm_PreservationFA} (with $\mathbb{C}(\text{IC}^*)$ playing the role of the $\mathbb{P}$ from that theorem).  More precisely, assume $\text{crit}(j) = \omega_2^V$, $j \restriction H^V_\theta \in N$, $H^V_\theta$ is in the wellfounded part of $N$, and there is a $C*H*K \in N$ that is $V$-generic for $\mathbb{C}(\text{IC}^*) * \dot{\mathbb{Q}} * \dot{\mathbb{R}}$.  The presence of $K$ ensures that, in $H^V_\theta[C*H*K]$, $H^V_{\omega_2} \in \text{IC}$.  By Lemma \ref{lem_InternalPartAbsolute} this is upward absolute to $N$.  Furthermore, $j(\Phi)[\omega_2^V] = \text{range}(\Phi) = H^V_{\omega_2}$.  So $N \models \omega_2^V \in j(\text{IC}^*)$, and hence
\begin{equation}
N \models C \cup \{ \omega_2^V \} \in j(\mathbb{C}(\text{IC}^*)).
\end{equation}
and is clearly stronger than $j[C]=C$.  By Theorem \ref{thm_PreservationFA}, $V[C] \models \text{PFA}$, completing the proof of Theorem \ref{thm_Cox_PFAplus1necessary}.

\subsection{Proof of Theorem \ref{thm_Cox_MMnecessary}}

In this section we prove that $\text{PFA}^{+\omega_1}$ does not imply the existence of a disjoint club sequence on $\omega_2$, thereby showing that Krueger's Theorem \ref{thm_Krueger_MM} is sharp.  By Theorem \ref{thm_Krueger_DisjointSequences}, it suffices to construct a model of $\text{PFA}^{+\omega_1}$ such that $\text{IS}=^* \text{IU}$ in $\wp_{\omega_2}(H_{\omega_2})$.  

Assume $V$ is a model of $\text{PFA}^{+\omega_1}$.  Fix a bijection $\Phi: \omega_2 \to H_{\omega_2}$ and let $\text{IS}^*$ be the set of $\gamma \in S^2_1$ such that $\Phi[\gamma] \in \text{IS}$.  Then almost every $W \in \text{IS} \cap \wp_{\omega_2}(H_{\omega_2})$ is of the form $\Phi[W \cap \omega_2]$ where $W \cap \omega_2 \in \text{IS}^*$.  

Let $\mathbb{C}(\text{IS}^*)$ be the poset of closed, bounded $c \subset \omega_2$ such that $c \cap S^2_1 \subset \text{IS}^*$, ordered by end-extension.  As in Section \ref{sec_ProofThmCox_PFAplus1necessary}, this poset is $<\omega_2$ distributive, and in particular $H_{\omega_2}$ and $\text{IS} \cap H_{\omega_2}$ are computed the same in $V$ and $V^{\mathbb{C}(\text{IS}^*)}$.  It is also routine to check that
\begin{equation}
V^{\mathbb{C}(\text{IS}^*)} \ \models \ \text{IS}=^* \text{IU} \text{ in } \wp_{\omega_2}(H_{\omega_2}).
\end{equation}

It remains to show that $V^{\mathbb{C}(\text{IS}^*)}$ is a model of $\text{PFA}^{+\omega_1}$.  Let $\dot{\mathbb{Q}}$ be a $\mathbb{C}(\text{IS}^*)$-name for a proper poset, let $\theta$ be a large regular cardinal with $\mathbb{C}(\text{IS}^*)*\dot{\mathbb{Q}} \in H_\theta$.  

By Theorem \ref{thm_PreservationFA},\footnote{Viewing $\mathbb{C}(\text{IS}^*)$ as the $\mathbb{P}$ from that theorem, and taking $\dot{\mathbb{R}}$ to be the $\mathbb{C}(\text{IS}^*)*\dot{\mathbb{Q}}$-name for the trivial poset (which is trivially forced by $\mathbb{C}(\text{IS}^*)*\dot{\mathbb{Q}}$ to be $\omega_1$-SSP).} in order to show that $\text{PFA}^{+\omega_1}$ holds in $V^{\mathbb{C}(\text{IS}^*)}$ it suffices to prove that \textbf{if} $j: V \to N$ is a generic elementary embedding with critical point $\omega_2^V$ such that $j \restriction H_\theta^V \in N$, $H^V_\theta$ is in the wellfounded part of $N$ and has size $\omega_1$ in $N$, and $N$ has an $\omega_1$-stationary correct $V$-generic filter $C*H$ for $\mathbb{C}(\text{IS}^*)*\dot{\mathbb{Q}}$, \textbf{then} $N$ believes that $j[C] = C$ has a lower bound in $j\big(\mathbb{C}(\text{IS}^*) \big)$.  Now since $j(\Phi)[\omega_2^V] = \text{range}(\Phi) = H^V_{\omega_2}$, it suffices to show that $H^V_{\omega_2} \in \text{IS}^N$ (since then $C \cup \{ \omega_2^V \}$ will be a condition in $j\big(\mathbb{C}(\text{IS}^*) \big)$ below $C = j[C]$).  We can without loss of generality assume that $\dot{\mathbb{Q}}$ collapses $\omega_2^V$.  Since $\mathbb{C}(\text{IS}^*)*\dot{\mathbb{Q}}$ is proper, it forces that $V \cap [\omega_2^V]^\omega$ is stationary; and since it collapses $\omega_2$, it follows that $V[C*H] \models H^V_{\omega_2} \in \text{IS}$.  Let $\dot{T}$ be the $\mathbb{C}(\text{IS}^*)*\dot{\mathbb{Q}}$-name for the internal part of $H^V_{\omega_2}$; then $\dot{T}$ is forced to be a stationary subset of $\omega_1$.  Since $C*H$ is an $\omega_1$-stationary correct filter, $\dot{T}_{C*H}$ is stationary in $N$.  Hence $H^V_{\omega_2}$ is internally stationary from the point of view of $N$.

\subsection{Proof of Theorem \ref{thm_Cox_DontNeedPlus2}}\label{sec_MM_Viale}

We need to prove that the global separation of IC from IS:
\begin{enumerate*}
 \item  follows from MM; 
 \item follows from $\text{PFA}^+$; and
 \item does not follow from PFA.
\end{enumerate*}
That PFA does not imply separation of IC from IS already follows from our Theorem \ref{thm_Cox_PFAplus1necessary}; recall in Section \ref{sec_ProofThmCox_PFAplus1necessary} we obtained a model of PFA where $\text{IC}=^* \text{IU}$ in $\wp_{\omega_2}(H_{\omega_2})$.  So it remains to show either MM or $\text{PFA}^+$ implies global separation of IS from IC.

We start with the MM proof.  We prove a little more, namely:

\begin{theorem}\label{thm_FixViale}
MM implies that $\text{G}_{\omega_1} \cap \text{IS} \setminus \text{IC}$ is stationary for every regular $\theta \ge \omega_2$. 
\end{theorem}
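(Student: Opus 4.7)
The plan is to apply MM to the $\omega_1$-SSP poset $\mathbb{Q}^{\omega_1\text{-SSP}}_{T,\theta}$ from Theorem~\ref{thm_TwoPosets}, using Woodin's Lemma~\ref{lem_WoodinKeyLemma} to produce stationarily many elementary submodels admitting $V$-generic filters, and then reading off from Corollary~\ref{cor_ControlInternal} that these submodels land in $\text{G}_{\omega_1} \cap \text{IS} \setminus \text{IC}$.

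Fix a regular $\theta \ge \omega_2$ and a stationary, costationary $T \subseteq \omega_1$, and let $\mathbb{Q} := \mathbb{Q}^{\omega_1\text{-SSP}}_{T,\theta}$. By Theorem~\ref{thm_TwoPosets}(\ref{item_SemiproperPoset}), $\mathbb{Q}$ is $\omega_1$-SSP, and since the $\omega_1$-SSP class is closed under restrictions, MM yields $\text{FA}(\mathbb{Q} \restriction q)$ for every $q \in \mathbb{Q}$. Fix a regular $\Omega$ with $\mathbb{Q} \in H_\Omega$ and $\Omega > |\wp(\mathbb{Q})|$. By Lemma~\ref{lem_WoodinKeyLemma} applied with $\mu = 0$, for any function $F: [H_\theta]^{<\omega} \to H_\theta$ there exists $W \in \wp^*_{\omega_2}(H_\Omega)$ with $W \prec (H_\Omega, \in, T, \mathbb{Q}, \theta, F)$ admitting a $(W,\mathbb{Q})$-generic filter. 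For any such $W$, Corollary~\ref{cor_ControlInternal}(\ref{item_SemiProperVersion_model}) (taking $\mu = \theta \in W$) yields that $W \cap H_\theta$ is indestructibly guessing with internal part exactly $T$ and external part exactly $T^c$. Since $T$ is both stationary and costationary, $W \cap H_\theta \in \text{G}_{\omega_1} \cap \text{IS} \setminus \text{IC}$. By elementarity, $W \cap H_\theta$ is closed under $F$, contains $\omega_1$, and has size $\omega_1$; this witnesses stationarity of $\text{G}_{\omega_1} \cap \text{IS} \setminus \text{IC}$ in $\wp^*_{\omega_2}(H_\theta)$.

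All of the real work is packaged into Section~\ref{sec_ControlInternal}: the hard part is not the forcing axiom application, but rather the construction in Theorem~\ref{thm_TwoPosets} of an $\omega_1$-SSP poset that simultaneously (i) seals $H^V_\theta$ as an indestructibly guessing model, (ii) forces its internal part to be exactly $T$, and (iii) preserves stationary subsets of $\omega_1$. Once that poset is in hand, MM supplies (via Woodin's characterization) enough $V$-generic filters to witness the desired stationarity, and nothing further is required.
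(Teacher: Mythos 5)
Your proposal is correct and follows essentially the same route as the paper: apply MM, via Woodin's Lemma~\ref{lem_WoodinKeyLemma}, to the poset $\mathbb{Q}^{\omega_1\text{-SSP}}_{T,\theta}$ from Theorem~\ref{thm_TwoPosets} with $T$ stationary and costationary, and then invoke Corollary~\ref{cor_ControlInternal} to conclude that the resulting $W \cap H_\theta$ lie in $\text{G}_{\omega_1} \cap \text{IS} \setminus \text{IC}$. Your write-up is if anything slightly more explicit than the paper's (spelling out closure under restrictions, the choice of $\Omega$, and the closure of $W \cap H_\theta$ under an arbitrary $F$), but the mathematical content is identical.
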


Theorem \ref{thm_FixViale} appeared as Theorem 4.4 part (3) of Viale~\cite{Viale_GuessingModel}; however the proof given there is incorrect (the proof given there appears to be implicitly using the stronger assumption $\text{MM}^{+2}$, analogous to Krueger's proof in \cite{MR2674000}, and works fine under that stronger assumption.)  We briefly summarize the argument from \cite{Viale_GuessingModel}, and where the error occurs.  A certain poset $\mathbb{P}_2 * \dot{\mathbb{Q}}_{\mathbb{P}_2}$ is defined that forces $H^V_\theta$ to be a guessing model in $\text{IS} \setminus \text{IC}$.  The third bullet at the very end of that proof (near the end of Section 4) claims that if $W \prec H_\theta$ is such that $|W|=\omega_1 \subset W \prec H_{(2^\theta)^+}$ and there exists a $(W, \mathbb{P}_2 * \dot{\mathbb{Q}}_{\mathbb{P}_2})$-generic filter, then $W$ is guessing, and $W \in \text{IS} \setminus \text{IC}$.  The guessing part is correct, but such a $W$ is not necessarily in $\text{IS} \setminus \text{IC}$.  To see why, let $\dot{\mathbb{C}}=\dot{\mathbb{C}}^{\text{fin}}_{\text{dec}}(V \cap [\theta]^\omega)$ be the $\mathbb{P}_2 * \dot{\mathbb{Q}}_{\mathbb{P}_2}$-name for the poset to shoot a continuous $\omega_1$-chain through $V \cap [\theta]^\omega$ as in Definition \ref{def_DecoratedPoset}.  Then $\mathbb{P}_2 * \dot{\mathbb{Q}}_{\mathbb{P}_2}*\dot{\mathbb{C}}$ is proper, and forces $H^V_\theta \in \text{IC}$.\footnote{And is still $\omega_1$-guessing, since $\mathbb{P}_2 * \dot{\mathbb{Q}}_{\mathbb{P}_2}$ forces $H^V_\theta$ to be indestructibly guessing.}  Hence under MM (or just PFA) there are stationarily many $W$ as above such that there exists a $(W, \mathbb{P}_2 * \dot{\mathbb{Q}}_{\mathbb{P}_2}*\dot{\mathbb{C}})$-generic filter; say $g*h*c$.  So in particular (by projecting to the first 2 coordinates) there exists a $W$-generic for $\mathbb{P}_2 * \dot{\mathbb{Q}}_{\mathbb{P}_2}$.  But the 3rd coordinate $c$ ensures that $W \cap H_\theta$ is an element of $\text{IC}$, and hence \emph{not} an element of $\text{IS} \setminus \text{IC}$.   

(One could give a different counterexample under MM, by instead letting $\dot{\mathbb{C}}$ name the poset to shoot a continuous $\omega_1$ chain through $[\theta]^\omega \setminus V$; in that context one would be able to find $W$ for which a $\mathbb{P}_2 * \dot{\mathbb{Q}}_{\mathbb{P}_2}$-generic exists, yet $W \notin \text{IS}$).

We now give a proof of Theorem \ref{thm_FixViale}.  Most of the work was already done in Section \ref{sec_ControlInternal} above.  
\begin{proof}
(of Theorem \ref{thm_FixViale}):  Assume MM, and fix a $T \subset \omega_1$ that is stationary and costationary.  Let $\theta \ge \omega_2$ be regular, and let $\mathbb{Q}^{\omega_1 \text{-SSP}}_{T,\theta}$ be the poset given by Theorem \ref{thm_TwoPosets}.  By Woodin's Lemma \ref{lem_WoodinKeyLemma}, there are stationarily many $W \prec H_{(2^\theta)^+}$ for which there exists a $(W, \mathbb{Q}^{\omega_1 \text{-SSP}}_{T,\theta})$-generic filter.  By Corollary \ref{cor_ControlInternal}, $W \cap H_\theta$ is an (indestructible) guessing model, and the internal part of $W$ is exactly $T$.  In particular, $W \in \text{IS} \setminus \text{IC}$, since $T$ was stationary and costationary.
\end{proof}

Finally, we show that the conclusion of Krueger's Theorem \ref{thm_Krueger_PFAplus2} follows from $\text{PFA}^+$.  Again we prove something a little stronger, namely:

\begin{theorem}\label{thm_FromPFAplus1}
$\text{PFA}^{+1}$ implies that $\text{G}_{\omega_1} \cap \text{IS} \setminus \text{IC}$ is stationary for every regular $\theta \ge \omega_2$. 
\end{theorem}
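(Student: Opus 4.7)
The plan is to mimic the proof of Theorem \ref{thm_FixViale} (the MM case), but trading the semiproper poset $\mathbb{Q}^{\omega_1\text{-SSP}}_{T,\theta}$ for its proper counterpart $\mathbb{Q}^{\text{proper}}_{T,\theta}$ from part \ref{item_ProperPoset} of Theorem \ref{thm_TwoPosets}. The point is that $\mathbb{Q}^{\text{proper}}_{T,\theta}$ is proper (so PFA-type axioms apply to it), yet when $T$ is stationary and costationary, a generic filter still forces the desired separation of IS from IC, \emph{provided} the filter is stationary-correct about a single distinguished name, namely the name $\dot{S}^{\omega_2}_{\text{external}}$ for the external part of $H^V_{\omega_2}$. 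This is precisely the sort of application for which the ``$+1$'' version of PFA is designed.

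More concretely: fix $T \subseteq \omega_1$ that is stationary and costationary, and fix a regular $\theta \ge \omega_2$. Apply Woodin's Lemma \ref{lem_WoodinKeyLemma} (in the case $\mu = 1$) to $\text{PFA}^{+1}$ and to the proper poset $\mathbb{Q}^{\text{proper}}_{T,\theta}$, with the distinguished name $\dot{S}^{\omega_2}_{\text{external}}$. By Corollary \ref{cor_ControlInternal}\ref{item_ProperVersion_model}, since $T$ is costationary, $\dot{S}^{\omega_2}_{\text{external}}$ is forced to be a stationary subset of $T^c$. So Lemma \ref{lem_WoodinKeyLemma} produces stationarily many $W \in \wp^*_{\omega_2}(H_{(2^\theta)^+})$ with $W \prec (H_{(2^\theta)^+},\in,T)$ for which there is an $\dot{S}^{\omega_2}_{\text{external}}$-correct, $(W,\mathbb{Q}^{\text{proper}}_{T,\theta})$-generic filter. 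Invoking the second half of Corollary \ref{cor_ControlInternal}\ref{item_ProperVersion_model}, for each such $W$ and every regular $\mu \in [\omega_2,\theta] \cap W$, the set $W \cap H_\mu$ is indestructibly guessing, its internal part contains the stationary set $T$, and its external part is a stationary subset of $T^c$. Hence $W \cap H_\mu \in \text{G}_{\omega_1} \cap \text{IS} \setminus \text{IC}$. Projecting to $\wp^*_{\omega_2}(H_\theta)$ (via $W \mapsto W \cap H_\theta$) yields stationarily many members of $\text{G}_{\omega_1} \cap \text{IS} \setminus \text{IC}$ in $\wp^*_{\omega_2}(H_\theta)$, as required.

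The substantive work has already been packaged into Theorem \ref{thm_TwoPosets} and Corollary \ref{cor_ControlInternal}, so the remaining task is essentially bookkeeping: one must verify that applying $\text{PFA}^{+1}$ to $\mathbb{Q}^{\text{proper}}_{T,\theta}$ with the single name $\dot{S}^{\omega_2}_{\text{external}}$ is legitimate (which is immediate, since $\mathbb{Q}^{\text{proper}}_{T,\theta}$ is proper by construction and $\dot{S}^{\omega_2}_{\text{external}}$ names a stationary subset of $\omega_1$), and then invoke the translation of $\text{PFA}^{+1}$ into the ``stationarily many partially-generic models'' form via Lemma \ref{lem_WoodinKeyLemma}. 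There is no real obstacle; the only point worth flagging is that the proof crucially exploits the extra flexibility supplied by $\mathbb{Q}^{\text{proper}}_{T,\theta}$ over $\mathbb{Q}^{\omega_1\text{-SSP}}_{T,\theta}$: the former allows the internal part to properly extend $T$, but a single $+1$-correctness demand on $\dot{S}^{\omega_2}_{\text{external}}$ forces the external part to remain stationary, which is all that is needed to escape IC.
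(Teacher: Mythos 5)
Your proof is correct and follows essentially the same route as the paper's: both apply Woodin's Lemma \ref{lem_WoodinKeyLemma} (with $\mu=1$) to the proper poset $\mathbb{Q}^{\text{proper}}_{T,\theta}$ with the single distinguished name $\dot{S}^{\omega_2}_{\text{external}}$, and then invoke Corollary \ref{cor_ControlInternal}\ref{item_ProperVersion_model} to conclude that the resulting $W \cap H_\theta$ lies in $\text{G}_{\omega_1} \cap \text{IS} \setminus \text{IC}$. The only difference is that you spell out the projection step $W \mapsto W \cap H_\theta$, which the paper leaves implicit.
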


\begin{proof}
Again, most of the work was done in Section \ref{sec_ControlInternal} above.  Assume $\text{PFA}^+$, fix a regular $\theta \ge \omega_2$, and fix any stationary, costationary $T \subset \omega_1$.  Let $\mathbb{Q}^{\text{proper}}_{T, \theta}$ be the poset given by Theorem \ref{thm_TwoPosets}, and let $\dot{S}_{\text{external}}$ be the $\mathbb{Q}^{\text{proper}}_{T, \theta}$-name for the external part of $H^V_\theta$.  By Woodin's Lemma \ref{lem_WoodinKeyLemma}, there are stationarily many $W \prec H_{(2^\theta)^+}$ such that $|W|=\omega_1 \subset W$ and there exists a $(W,\mathbb{Q}^{\text{proper}}_{T, \theta})$-generic filter $g$ that interprets $\dot{S}_{\text{external}}$ as a stationary subset of $\omega_1$.   By Corollary \ref{cor_ControlInternal}, $W \cap H_\theta$ is indestructibly $\omega_1$-guessing, its internal part contains the stationary set $T$ (hence $W \cap H_\theta \in \text{IS}$) and its external part is a stationary subset of $T^c$ (hence $W \cap H_\theta \notin \text{IC}$).
\end{proof}

\section{Separation of stationary set reflection}\label{sec_SeparateStatReflect}

In this section we prove the various separations of stationary reflection from the introduction.  The proofs of Theorems \ref{thm_PFAplusNotGICimpliesDRPGIS}, \ref{thm_PFAplusNotGIAimpliesDRPGIC}, and \ref{thm_MMplusnotISimpliesORD_DRP} are similar, so we group those together.  Most of the work for these theorems was done already in Section \ref{sec_ControlInternal}.  The proofs of Theorems \ref{thm_Cox_PreservePFAnegIC} and \ref{thm_Cox_PreservePFAnegIA} are also similar, so those are grouped together; the main tool for those is Theorem \ref{thm_PreservationFA}.  In the final subsection we prove Theorem \ref{thm_MM_notIS_notImplyRPIU}.

\subsection{Forcing axiom implications}

Here we prove Theorems \ref{thm_PFAplusNotGICimpliesDRPGIS}, \ref{thm_PFAplusNotGIAimpliesDRPGIC}, and \ref{thm_MMplusnotISimpliesORD_DRP}.

\subsubsection{Proof of Theorem \ref{thm_PFAplusNotGICimpliesDRPGIS}}\label{subsub_PFAnotGIC}

Assume $\text{PFA}^{+\omega_1}_{H^V_{\omega_2} \notin \text{IC}}$, and fix a regular $\theta \ge \omega_2$.  Fix a costationary subset $T$ of $\omega_1$ ($T  = \emptyset$ will work).  Let $\mathbb{Q}^{\text{proper}}_{\theta,T}$ be the poset from Theorem \ref{thm_TwoPosets}; that theorem (and costationarity of $T$) ensures that $\mathbb{Q}^{\text{proper}}_{\theta,T}$ is proper and forces $H^V_{\omega_2} \in \text{IS} \setminus \text{IC}$, so in particular is in the class of posets to which $\text{PFA}^{+\omega_1}_{H^V_{\omega_2} \notin \text{IC}}$ applies.  By Woodin's Lemma \ref{lem_WoodinKeyLemma}, there are stationarily many $W \prec (H_{(2^\theta)^+},\in,\dots)$ such that $|W|=\omega_1 \subset W$ and there exists a $(W,\mathbb{Q}^{\text{proper}}_{\theta,T})$-generic filter $g$ that is correct with respect to stationary subsets of $\omega_1$ (that have names in $W$).  By Corollary \ref{cor_ControlInternal}, $W \cap H_\theta$ is a guessing set, and is in $\text{IS} \setminus \text{IC}$.

It remains to show that $W \cap H_\theta$ is diagonally, \emph{internally} reflecting, so let $R \in W$ be a stationary subset of $[H_\theta]^\omega$.  We need to prove that $R \cap W \cap [W \cap H_\theta]^\omega$ is stationary in $[W \cap H_\theta]^\omega$.  Let $\langle \dot{Q}_i \ : \ i < \omega_1 \rangle$ be the name for the generic filtration of $H^V_\theta$, and $\dot{S}_R$ be the name for
\[
\{ i  < \omega_1 \ : \ \dot{Q}_i \in \check{R}  \}.
\]
Since $\mathbb{Q}^{\text{proper}}_{\theta,T}$ is proper, then $R$ remains stationary, and it follows that $\dot{S}_R$ names a stationary subset of $\omega_1$.  Also notice that since $R \in V$, then in particular $R \subset V$ and so
\begin{equation}\label{eq_ForceQi_in_V}
\Vdash \forall i \in \dot{S}_R \ \ \dot{Q}_i \text{ is an element of the ground model}.
\end{equation}

Now the name $\dot{S}_R$ can be taken to be an element of $W$, so $S:=(\dot{S}_R)_g$ is really stationary in $V$.  Now $\langle (\dot{Q}_i)_g \ : \ i < \omega_1 \rangle$ is a filtration of $W \cap H_\theta$.  Then for every $i \in S$,  $(\dot{Q}_i)_g$ is an element of $R$, and also (by \eqref{eq_ForceQi_in_V}) an element of $W$.  This completes the proof of Theorem \ref{thm_PFAplusNotGICimpliesDRPGIS}.

\subsubsection{Proof of Theorem \ref{thm_PFAplusNotGIAimpliesDRPGIC}}

This is very similar to the proof of Theorem \ref{thm_PFAplusNotGICimpliesDRPGIS}, so we only briefly sketch it.  Assume $\text{PFA}^{+\omega_1}_{H^V_{\omega_2} \notin \text{IA}}$.  Let $\mathbb{Q}^{\text{proper}}_{\theta,\omega_1}$ be the poset from Theorem \ref{thm_TwoPosets}, with $T = \omega_1$.  By that theorem, $H^V_{\omega_2}$ is forced to be indestructibly guessing, and have internal part containing $T$.  Guessing models are never internally approachable, so in particular the poset forces $H^V_{\omega_2} \notin \text{IA}$, and hence the forcing axiom $\text{PFA}^{+\omega_1}_{H^V_{\omega_2} \notin \text{IA}}$ applies to it.  So by Woodin's Lemma \ref{lem_WoodinKeyLemma} there are stationarily many $W \prec (H_{(2^\theta)^+},\in)$ for which there exists a $g$ that is $W$-generic and stationary correct, and Corollary \ref{cor_ControlInternal} ensures that such $W$ are indestructibly guessing and have internal part containing $T=\omega_1$, hence internally club.  The proof that  $W$ is internally, diagonally reflecting is identical to the corresponding proof in Section \ref{subsub_PFAnotGIC}.

\subsubsection{Proof of Theorem \ref{thm_MMplusnotISimpliesORD_DRP}}

Assume $\text{MM}^{+\omega_1}_{H^V_{\omega_2} \notin \text{IS}}$, and consider the poset $\mathbb{Q}^{\omega_1 \text{-SSP}}_{\emptyset, \theta}$ from Theorem \ref{thm_TwoPosets} (taking the $T$ from that theorem to be the empty set); that theorem yields that $H^V_{\omega_2}$ is forced to be indestructibly guessing and have internal part exactly $T = \emptyset$, and hence to \emph{not} be in $\text{IS}$.  So $\mathbb{Q}^{\omega_1 \text{-SSP}}_{\emptyset, \theta}$ is a poset to which $\text{MM}^{+\omega_1}_{H^V_{\omega_2} \notin \text{IS}}$ is applicable.  By Woodin's Lemma \ref{lem_WoodinKeyLemma} there are stationarily many $W \in \wp^*_{\omega_2}(H_{(2^\theta)^+})$ such that there is some $(W,\mathbb{Q}^{\omega_1 \text{-SSP}}_{\emptyset, \theta})$-generic filter that is stationary correct (about names for stationary subsets of $\omega_1$ lying in $W$).  By Corollary \ref{cor_ControlInternal}, it follows that $R \cap \text{sup}(W \cap \theta)$ is stationary in $\text{sup}(W \cap \theta)$ whenever $R \in W$ and $R$ is stationary in $\theta \cap \text{cof}(\omega)$.

\subsection{Preservation results}

Here we prove Theorems \ref{thm_Cox_PreservePFAnegIC} and  \ref{thm_Cox_PreservePFAnegIA}.

\subsubsection{Proof of Theorem \ref{thm_Cox_PreservePFAnegIC}}\label{sec_proofPFAnotICpreserved}

  Assume $V$ is a model of $\text{PFA}^{+\omega_1}_{H^V_{\omega_2} \notin \text{IC}}$.  By Theorem \ref{thm_PFAplusNotGICimpliesDRPGIS}, $\text{DRP}_{\text{internal, GIS}}$ holds, so in particular WRP holds.   Then by Foreman-Magidor-Shelah~\cite{MR924672}, $2^{\omega_1} = \omega_2$.\footnote{One could probably modify the arguments of Velickovic and Todorcevic to prove $2^{\omega_1} = \omega_2$ already follows from the ``non-plus version" $\text{PFA}_{H^V_{\omega_2} \notin \text{IC}}$.  We did not check this, however.} Fix a bijection $\Phi: \omega_2 \to H_{\omega_2}$ and define
\[
\text{IC}^*:= \{ \gamma \in S^2_1 \ : \ \Phi[ \gamma] \in \text{IC} \}.
\]
It is routine to see that for all but nonstationarily many $W \in \text{IC} \cap \wp_{\omega_2}(H_{\omega_2})$, $W$ is of the form $\Phi [W \cap \omega_2]$.

As in Krueger~\cite{MR2674000}, let $\mathbb{P}_{\text{nrIC}}$ be the following partial order (nrIC stands for ``nonreflecting in IC"):  conditions are functions $f: \alpha \cap \text{cof}(\omega) \to 2$ for some $\alpha < \omega_2$ such that for every $\gamma \in \text{IC}^*$, the set 
\[
\{ \xi < \gamma \ : \ f(\xi) =1 \}
\]
is nonstationary in $\gamma$.  The ordering is by function extension.  Then:
\begin{enumerate}
 \item $\mathbb{P}_{\text{nrIC}}$ is $\sigma$-closed and $<\omega_2$ distributive (Lemma 4.1 of \cite{MR2674000}), so in particular
 \[
 H^V_{\omega_2} = H^{V^{\mathbb{P}_{\text{nrIC}}}}_{\omega_2}
 \]
 and
 \[
\Gamma:= \Big( \text{IC} \cap \wp_{\omega_2}(H_{\omega_2}) \Big)^V \text{ is equal to } \Big( \text{IC} \cap \wp_{\omega_2}(H_{\omega_2}) \Big)^{V^{\mathbb{P}_{\text{nrIC}}}}.
 \]

 \item If $G$ is $(V,\mathbb{P}_{\text{nrIC}})$-generic, then by identifying $G$ with $\{ \xi < \omega_1 \ : \ G(\xi) = 1 \}$, in $V[G]$ we have:
\begin{itemize}
 \item $G$ is a stationary subset of $S^2_0$ (Lemma 4.2 of \cite{MR2674000});
 \item $G \cap \gamma$ is nonstationary for all $\gamma \in \text{IC}^*$;
 \item $\text{RP}_{\text{IC}}$ fails in $V[G]$.  We briefly sketch the argument.  Set 
 \[
 \widetilde{G}:= \{ z \in [\omega_2]^\omega \ : \ \text{sup}(z) \in G \}.
 \] 
 Then $\widetilde{G}$ is a stationary subset of $[\omega_2]^\omega$.  If $W \in \Gamma$ (as defined above) and 
 \[
 \text{Sk}^{(H_{\omega_3}[G],\in, \Phi, G, \Delta)} (W) \cap H_{\omega_2} = W
 \]
  where $\Delta$ is some wellorder of $H^{V[G]}_{\omega_3}$, then since $G \cap (W \cap \omega_2)$ is nonstationary and $\text{cf}(W \cap \omega_2) = \omega_1$, it is straightforward to see that $\widetilde{G} \cap [W]^\omega$ is nonstationary.  This shows that $\widetilde{G}$ can reflect to at most \emph{nonstationarily} many members of $\Gamma$.  But if $\text{RP}_{\text{IC}}$ held, then any stationary subset of $[\omega_2]^\omega$ would reflect to stationarily many members of $\Gamma$ (see Foreman-Todorcevic~\cite{MR2115072}, Lemma 8).
\end{itemize}
\end{enumerate}

We use Theorem \ref{thm_PreservationFA} to prove that $\mathbb{P}_{\text{nrIC}}$ preserves $\text{PFA}^{+\omega_1}_{H^V_{\omega_2} \notin \text{IC}}$.  Let $\dot{\mathbb{Q}}$ be a $\mathbb{P}_{\text{nrIC}}$-name for a proper poset that forces ``the ground model's $H_{\omega_2}$ is not in IC"; i.e.\ that $H^{V^{\mathbb{P}_{\text{nrIC}}}}_{\omega_2} \notin \text{IC}$.  Since $H^V_{\omega_2} =H^{V^{\mathbb{P}_{\text{nrIC}}}}_{\omega_2}$,  $\mathbb{P}_{\text{nrIC}}*\dot{\mathbb{Q}}$ forces $H^V_{\omega_2} \notin \text{IC}$.  Let $\dot{\mathbb{R}}$ be the $\mathbb{P}_{\text{nrIC}}*\dot{\mathbb{Q}}$ name for the trivial poset, which is trivially forced to be $\omega_1$-SSP.  Then $\mathbb{P}_{\text{nrIC}}*\dot{\mathbb{Q}} (* \dot{\mathbb{R}})$ is a poset to which the axiom $\text{PFA}^{+\omega_1}_{H^V_{\omega_2} \notin \text{IC}}$ applies.  This verifies clause \ref{item_3stepinGamma} of  Theorem \ref{thm_PreservationFA}.

We now verify clause \ref{item_ClauseWhenever} of Theorem \ref{thm_PreservationFA}.  Suppose $j: V \to N$ is a generic elementary embedding satisfying the assumptions of that clause, where in particular $G*H(*K) \in N$ is generic over $V$ for $\mathbb{P}_{\text{nrIC}}*\dot{\mathbb{Q}} (*\dot{\mathbb{R}})$ and correct with respect to stationary subsets of $\omega_1$.  In particular, $H^V_\theta[G*H]$ is correct (from $N$'s point of view) about the fact that the external part of $H^V_{\omega_2}$ is stationary (recall from above that $\mathbb{P}_{\text{nrIC}}*\dot{\mathbb{Q}}$ forces $H^V_{\omega_2} \notin \text{IC}$). In other words, $H^V_{\omega_2} \notin \text{IC}^N$.  Now $j(\Phi)[\omega_2^V] = \text{range}(\Phi) = H^V_{\omega_2}$, so $\omega_2^V \notin j(\text{IC}^*)$.  Hence $j[G] = G$ is a condition in $j(\mathbb{P}_{\text{nrIC}})$, since by the definition of the forcing $\mathbb{P}_{\text{nrIC}}$, there are no requirements whatsoever at points which are not in $\text{IC}^*$.  This verifies clause \ref{item_ClauseWhenever} of Theorem \ref{thm_PreservationFA}, and hence by that theorem, $\text{PFA}^{+\omega_1}_{H^V_{\omega_2} \notin \text{IC}}$ holds in $V^{\mathbb{P}_{\text{nrIC}}}$.

Finally, suppose in addition that PFA held in the ground model.  The poset $\mathbb{P}_{\text{nrIC}}$ is $\omega_1 + 1$-tactically closed, as in Yoshinobu~\cite{MR3037549}; the argument is virtually identical to the proof of Example 1 on page 751 of \cite{MR3037549}.  Hence, $\mathbb{P}_{\text{nrIC}}$ preserves PFA by the main theorem of \cite{MR3037549}.  Alternatively, one could use Theorem \ref{thm_PreservationFA} to run basically the same argument as the Beaudoin-Magidor example in Section \ref{sec_BedMagExample} above.

\subsubsection{Proof of Theorem \ref{thm_Cox_PreservePFAnegIA}}

This proof is very similar to the proof of Theorem \ref{thm_Cox_PreservePFAnegIC} above, so we only briefly sketch it.  Assume that the ground model $V$ satisfies $\text{PFA}^{+\omega_1}_{H^V_{\omega_2} \notin \text{IA}}$, and define $\text{IA}^*$ and $\mathbb{P}_{\text{nrIA}}$ similarly to the way that $\text{IC}^*$ and $\mathbb{P}_{\text{nrIC}}$ were defined in Section \ref{sec_proofPFAnotICpreserved}, with IA playing the role here that IC played there.  Then $\mathbb{P}_{\text{nrIA}}$ has the same properties that $\mathbb{P}_{\text{nrIC}}$ had, with the obvious modifications. In particular, if $G$ is generic for $\mathbb{P}_{\text{nrIA}}$ then $\text{RP}_{\text{IA}}(\omega_2)$ fails in $V[G]$.

Again, use Theorem \ref{thm_PreservationFA} to show that $\mathbb{P}_{\text{nrIA}}$ preserves $\text{PFA}^{+\omega_1}_{H^V_{\omega_2} \notin \text{IA}}$.  This is basically the same argument as above, except in the current situation, the reason that $H^V_{\omega_2} \notin \text{IA}^N$ is because it is indestructibly guessing, and guessing models are never internally approachable.  This ensures, just as in the proof above, that $j[G] = G$ is a condition in $j(\mathbb{P}_{\text{nrIA}})$.  The rest is identical to the proof above, so we omit the argument.

\subsubsection{Proof of Theorem \ref{thm_MM_notIS_notImplyRPIU}}

Assume that $V$ satisfies $\text{MM}^{+\omega_1}_{H^V_{\omega_2} \notin \text{IS}}$.  Let $\mathbb{P}=\mathbb{P}\big( [\omega_2]^\omega \big)$ be the poset from Section 1 of Aspero-Krueger-Yoshinobu~\cite{MR2567928}.  Conditions are $\omega_1$-sized subsets $p$ of $[\omega_2]^\omega$ such that $p \cap [W]^\omega$ is nonstationary in $[W]^\omega$ whenever $W \in [\omega_2]^{\omega_1}$.  A condition $q$ is stronger than $p$ if $q \supseteq p$ and for every $y \in q \setminus p$, $y$ is not a subset of $\bigcup p$.  Section 1 of \cite{MR2567928} proves that:
\begin{itemize}
 \item $\mathbb{P}$ is $<\omega_2$ strategically closed; in particular it is $\omega_1$-SSP, and $H^V_{\omega_2} = H^{V^{\mathbb{P}}}_{\omega_2}$;
 \item $\mathbb{P}$ adds a stationary subset of $[\omega_2]^\omega$ that does not reflect to any set of size $\omega_1$; hence $\text{RP}(\omega_2)$ fails in the extension.
\end{itemize}

It remains to verify that $\mathbb{P}$ preserves $\text{MM}^{+\omega_1}_{H^V_{\omega_2} \notin \text{IS}}$; again we use Theorem \ref{thm_PreservationFA}.  Let $\dot{\mathbb{Q}}$ be a $\mathbb{P}$-name for an $\omega_1$-SSP poset that forces the ground model is not in IS, and let $\dot{\mathbb{R}}$ be the $\mathbb{P}*\dot{\mathbb{Q}}$ name for the trivial forcing, which is of course $\omega_1$-SSP.  Since $H^V_{\omega_2} = H^{V^{\mathbb{P}}}_{\omega_2}$, $\mathbb{P}*\dot{\mathbb{Q}} (*\dot{\mathbb{R}})$ forces $H^V_{\omega_2} \notin \text{IS}$ and is $\omega_1$-SSP, so clause \ref{item_3stepinGamma} of Theorem \ref{thm_PreservationFA} is satisfied.  Now suppose $j: V \to N$ is as in the hypotheses of clause \ref{item_ClauseWhenever} of Theorem \ref{thm_PreservationFA} (using the notation from that clause).  Then $V \cap [\omega_2^V]^\omega$ is nonstationary in $H^V_\theta[G*H (*K)] = H^V_\theta[G*H]$, and so $G$, being the generic subset of $V \cap [\omega_2^V]^\omega$ added by $\mathbb{P}$, is also nonstationary in $H^V_\theta[G*H]$.  Hence $j[G] = G$ is nonstationary in $[\omega_2^V]^\omega$ from the point of view of $N$.  Moreover, since $G$ is nonreflecting in $V[G]$, then $G \cap [\gamma]^\omega$ is nonstationary for every $\gamma < \omega_2^V$, and this is upward absolute from $H^V_\theta[G]$ to $N$.  So $j[G] = G$ is a condition in $j(\mathbb{P})$.  To see that it is stronger than every condition in $j[G]$, consider some $p \in G$.  Then $j[G]=G \supset p = j(p)$, and if $y \in G \setminus p$ then there is some $q \in G$ with $y \in q$, and hence $y \in q \setminus p$.  Without loss of generality we can assume $q$ is stronger than $p$, and so $y$ is not a subset of $\bigcup p$, by definition of the forcing.  This shows that $j[G] = G$ is stronger than every condition in $j[G]$, and hence the assumptions of clause \ref{item_ClauseWhenever} of Theorem \ref{thm_PreservationFA} is satisfied, and so $V^{\mathbb{P}}$ satisfies $\text{MM}^{+\omega_1}_{H^V_{\omega_2} \notin \text{IS}}$.

\section{Closing Remarks}\label{sec_ClosingRemarks}

As mentioned in the introduction, we do not know if the implication
\[
\text{RP}_{\text{internal}} \ \implies \ \text{RP}_{\text{IS}}
\]
from the Fuchino-Usuba Theorem \ref{thm_FuchinoUsubaImplications}, or the implication 
\[
\text{RP}_{\text{IS}} \implies \text{RP}_{\text{IU}}
\]
 from \eqref{eq_RP_implic}, can be reversed.  These questions are very similar, since separating either implication would require some stationary sets to reflect \emph{only} to the external parts of members of $\wp^*_{\omega_2}(V)$.   We suspect a solution to one of them would likely solve the other, and conjecture that neither can be reversed.  The following theorem lends some support to this conjecture:
\begin{theorem}
Martin's Maximum implies that there is a stationary subset $S$ of $[H_{\omega_2}]^\omega$ such that for stationarily many $W \in \text{IS} \cap \wp^*_{\omega_2}(H_{\omega_2})$:  $S$ reflects to $W$, but not internally.\footnote{I.e.\ $W \in \text{IS}$ and $S \cap [W]^\omega$ is stationary, but $S \cap W \cap [W]^\omega$ is nonstationary.}
\end{theorem}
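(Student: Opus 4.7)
The plan is to construct, under MM, a stationary set $S$ concentrated on countable sets whose intersection with $\omega_1$ lies in the complement of a fixed stationary set, and to find stationarily many $W \in \text{IS} \cap \wp^*_{\omega_2}(H_{\omega_2})$ whose internal part is \emph{exactly} that stationary set. Internally, such $W$'s only see countable subsets of themselves whose $\cap \omega_1$ lies in the prescribed internal part, so they avoid $S$ internally; but $S \cap [W]^\omega$ remains stationary simply because $\omega_1 \subseteq W$ and the complement is stationary.

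Concretely, fix $T \subseteq \omega_1$ stationary and costationary, and set $S := \{z \in [H_{\omega_2}]^\omega : z \cap \omega_1 \in T^c\}$, which is (projectively) stationary. Apply Woodin's Lemma \ref{lem_WoodinKeyLemma} to the $\omega_1$-SSP poset $\mathbb{Q}^{\omega_1\text{-SSP}}_{T,\omega_2}$ from Theorem \ref{thm_TwoPosets}: under MM, there are stationarily many $W \prec (H_{(2^{\omega_2})^+},\in,T)$ of size $\omega_1$ with $\omega_1 \subseteq W$ admitting a $(W,\mathbb{Q}^{\omega_1\text{-SSP}}_{T,\omega_2})$-generic filter. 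By Corollary \ref{cor_ControlInternal}, for such $W$ the set $W \cap H_{\omega_2}$ has internal part \emph{exactly} $T$ and external part \emph{exactly} $T^c$; a standard projection argument then yields stationarily many $W' \in \wp^*_{\omega_2}(H_{\omega_2})$ of the form $W \cap H_{\omega_2}$, and each such $W'$ lies in $\text{IS}$.

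It then remains to verify the two required properties of each such $W'$. That $S \cap [W']^\omega$ is stationary follows from $\omega_1 \subseteq W'$ and stationarity of $T^c$: given $F : [W']^{<\omega} \to W'$ and any $\alpha \in T^c$, one builds a countable elementary substructure of $(W',F,\dots)$ whose intersection with $\omega_1$ is $\alpha$. For the failure of internal reflection, pick any filtration $\vec{N} = \langle N_i : i < \omega_1 \rangle$ of $W'$ with $N_i \cap \omega_1 = i$ on a club $C$; since the internal and external parts of $W'$ are \emph{exactly} $T$ and $T^c$, we may shrink $C$ so that for $i \in C$, $N_i \in W'$ iff $i \in T$. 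Then $D := \{N_i : i \in C\}$ is a club in $[W']^\omega$ disjoint from $S \cap W'$, because every $N_i \in D \cap W'$ has $N_i \cap \omega_1 = i \in T$ and hence $N_i \notin S$. The only delicate point is already handled in Section \ref{sec_ControlInternal}: namely, simultaneously forcing a prescribed internal \emph{and} external part for $W' \cap H_{\omega_2}$. Once that control is in place, the combinatorial verification above is routine.
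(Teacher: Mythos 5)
Your proposal is correct and follows essentially the same route as the paper: the same set $S = [H_{\omega_2}]^\omega \searrow T^c$ for a stationary, costationary $T$, the same appeal to Woodin's Lemma \ref{lem_WoodinKeyLemma} applied to $\mathbb{Q}^{\omega_1\text{-SSP}}_{T,\omega_2}$, and the same use of Corollary \ref{cor_ControlInternal} to pin the internal part of $W \cap H_{\omega_2}$ to exactly $T$, so that $S$ reflects to $W \cap H_{\omega_2}$ but misses a club of its internal filtration. The only cosmetic difference is that you spell out the filtration verification in more detail than the paper does.
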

\begin{proof}
Fix a stationary, costationary subset $T$ of $\omega_1$, and set $S_{T^c}:= [H_{\omega_2}]^\omega \searrow T^c$.  Let $\mathbb{Q}_T:=\mathbb{Q}^{\omega_1 \text{-SSP}}_{T,\omega_2}$ be the $\omega_1$-SSP poset from Theorem \ref{thm_TwoPosets}.  By Woodin's Lemma \ref{lem_WoodinKeyLemma}, there are stationarily many $W \in \wp^*_{\omega_2}(H_{\omega_3})$ for which there exists a $\big( W, \mathbb{Q}_T \big)$-generic filter. Let $R$ be this stationary set.  By Corollary \ref{cor_ControlInternal}, if $W \in R$ then $\bar{W}:=W \cap H_{\omega_2}$ has internal part exactly $T$, which implies that $\bar{W} \in \text{IS}$ and that $S_{T^c} \cap \bar{W} \cap [\bar{W}]^\omega$ is nonstationary.  On the other hand, stationarity of $T^c$ ensures that $S_{T^c} \cap [\bar{W}]^\omega = ([H_{\omega_2}]^\omega \searrow T^c) \cap [\bar{W}]^\omega$ is stationary, since in fact $S_{T^c} \cap [Z]^\omega$ is stationary for any set $Z \subseteq H_{\omega_2}$ such that $\omega_1 \subset Z$.  Then $R_1:= \{ W \cap H_{\omega_2} \ : \ W \in R \}$ is as required.  
\end{proof}

\begin{bibdiv}
\begin{biblist}
\bibselect{../../../MasterBibliography/Bibliography}
\end{biblist}
\end{bibdiv}

\end{document}